\DeclareSymbolFont{cyrletters}{OT2}{wncyr}{m}{n}
\DeclareMathSymbol{\Sha}{\mathalpha}{cyrletters}{"58}
\newcommand{\N}{\mathbb{N}}
\newcommand{\Z}{\mathbb{Z}}
\newcommand{\R}{\mathbb{R}}
\newcommand{\D}{\mathscr{D}}
\newcommand{\Ss}{\mathcal{S}}
\newcommand{\A}{\mathcal{A}}
\newcommand{\C}{\mathcal{C}}
\newcommand{\dyad}{\operatorname{dist}_{\D}}
\DeclareFontFamily{U}{wncy}{}
\DeclareFontShape{U}{wncy}{m}{n}{<->wncyr10}{}
\DeclareSymbolFont{mcy}{U}{wncy}{m}{n}
\DeclareMathSymbol{\Sh}{\mathord}{mcy}{"58} 
\newcommand{\Lp}{L^p}
\newcommand{\Ltwo}{L^2}
\newcommand{\Lone}{L^1}
\newcommand{\Lonew}{L^{1,\infty}}
\newcommand{\one}{1}
\newcommand{\BMO}{\mathrm{BMO}}
\newcommand{\Maximal}{\mathcal{M}}
\newcommand{\AtwoBalanced}{A_2^{b}}
\newcommand{\ApBalanced}{A_p^{b}}
\newcommand{\ApprimeBalanced}{A_{p'}^{b}}
\newcommand{\AoneBalanced}{A_1^{b}}
\newcommand{\HS}{\mathrm{HS}}
\newcommand{\Cpb}{c_p^b}
\newcommand{\Coneb}{c_1^b}
\newcommand{\Ctwob}{c_2^b}
\newtheorem{theorem}{Theorem}[section]
\newtheorem{ltheorem}{Theorem}
\theoremstyle{definition}
\newtheorem{definition}[theorem]{Definition}
\newtheorem{lem}[theorem]{Lemma}
\newtheorem{prop}[theorem]{Proposition}
\newtheorem{cor}[theorem]{Corollary}
\newtheorem{rem}[theorem]{Remark}
\theoremstyle{remark}
\newtheorem{remark}[theorem]{Remark}
\numberwithin{equation}{section}
\begin{document}

\title[Sparse domination for balanced measures]{Balanced measures, sparse domination and complexity-dependent weight classes}


\author{Jos\'{e} M. Conde Alonso}
\address{Jos\'{e} M. Conde Alonso \hfill\break\indent 
 Departamento de Matem\'aticas \hfill\break\indent 
 Universidad Aut\'onoma de Madrid \hfill\break\indent 
 C/ Francisco Tom\'as y Valiente sn\hfill\break\indent 
 28049 Madrid, Spain}
\email{jose.conde@uam.es}
\author{Jill Pipher}
\address{Jill Pipher \hfill\break\indent 
 Department of Mathematics \hfill\break\indent 
 Brown University \hfill\break\indent 
 151 Thayer Street\hfill\break\indent 
 Providence, RI, 02912 USA}
\email{jill\_pipher@brown.edu}
\author{Nathan A. Wagner}
\address{Nathan A. Wagner \hfill\break\indent 
 Department of Mathematics \hfill\break\indent 
 Brown University \hfill\break\indent 
 151 Thayer Street\hfill\break\indent 
 Providence, RI, 02912 USA}
\email{nathan\_wagner@brown.edu}

\maketitle

\date{today}

\begin{abstract}
    We study sparse domination for operators defined with respect to an atomic filtration on a space equipped with a general measure $\mu$. In the case of Haar shifts, $L^p$-boundedness is known to require a weak regularity condition, which we prove to be sufficient to have a sparse domination-like theorem. Our result allows us to characterize the class of weights where Haar shifts are bounded. A surprising novelty is that said class depends on the complexity of the Haar shift operator under consideration. Our results are qualitatively sharp.  
\end{abstract}

\section*{Introduction}

Probabilistic techniques and methods are ubiquitous in harmonic analysis and especially so in Cal\-de\-r\'on-Zygmund theory. Often these methods make use of the filtration of $\sigma$-algebras generated by the dyadic system of cubes
$$
\D := \bigcup_{k\in\Z} \D_k = \left\{2^{-k}[j_1,j_1+1) \times [j_2,j_2+1) \times \ldots [j_n,j_n+1) : j \in \Z^n \right\}.
$$
Probabilistic stopping-time arguments have a long history as a critical tool in analysis; for example, in the Calder\'on-Zygmund decomposition, which is used to prove a weak-type estimate for singular integrals. Some fundamental insights into the close connection between certain function spaces and 
operators in analysis and their probabilistic counterparts were established in: \cite{GJ1982}, which explored the relationship
between BMO and dyadic BMO, and later in the dyadic representation theorems of \cites{P2000,Hyt2012}. 
In particular, the main result in \cite{P2000} recovers the Hilbert transform as an average of dyadic operators:
\begin{equation}\label{eq:PetermichlRep} \tag{HRep}
    \mathcal{H}f(x) = c_0 \int_\Omega \Sh_{\D^\omega}f(x) dP(\omega),
\end{equation}
where $\mathcal{H}$ is the Hilbert transform, $(\Omega,P)$ is a probability space, $\mathscr{D}^\omega$ is a different dyadic system for each value of $\omega$ and the dyadic Hilbert transform is the operator
\begin{equation}\label{eq:DyadicHilbert} \tag{HDyad}
\Sh_{\D^\omega}f(x) = \sum_{I \in \D^\omega} \langle f, h_I\rangle(h_{I_-}(x)-h_{I_+}(x)),   
\end{equation}
where $h_I=2^{-1}(|I_-|^{-1/2}\one_{I_-}-|I_+|^{-1/2}\one_{I_+})$ denotes the Haar function associated with the interval $I$, and $I_-$ (resp. $I_+$) is the left (resp. right) dyadic child of $I$. The main result in \cite{Hyt2012} expands on \eqref{eq:PetermichlRep} by showing that any Calder\'on-Zygmund operator on $\R^n$ can be written as an average of higher-dimensional generalizations of $\Sh$. 

Once it has been established a continuous operator can be recovered from probabilistic counterparts, one can often more readily derive quantitative consequences, like weighted estimates. One efficient way of doing so is the sparse domination technique, initiated in \cite{Ler2013} and applied to get pointwise estimates for Calder\'on-Zygmund operators via estimates for the dyadic objects that represent that operator \cites{CR2016,LN2019}. Sparse domination is not the only possible path: direct estimates are possible, like those in  \cite{Lacey2017} or using the one-third trick as in \cite{Mei2003}. 

The setting of the results discussed above is $\R^n$, equipped with the Lebesgue measure, which is a natural one for the problems under study. However, other measures $\mu$ on $\R^n$ also appear naturally, arising from questions in geometry or partial differential equations.
For example, the class of representing measures associated to elliptic or parabolic divergence form operators are all ``doubling". A measure $\mu$ is said to be doubling if there exists a constant $c_\mu$ such that for all balls $B$ centered at its support, there holds
\begin{equation} \label{eq:doubling} \tag{Db}
    \mu(2B) \leq c_\mu \mu(B).
\end{equation}
Another important class of measures is the class of measures of {\it polynomial growth}. These may fail \eqref{eq:doubling}. Despite the lack of the doubling property, using mainly geometric techniques, a very complete Calder\'on-Zygmund theory has been developed 
for measures satisfying the polynomial growth condition in \cites{NTV1998,To2001,To2001b,NTV2003}. These developments led to profound applications, among which are: the solution of the Painlev\'e problem \cite{To2003}, the $n-1$ case of the David-Semmes problem \cite{NTV2014}, and the characterization of the rectifiability of harmonic measure \cite{AHMMMTV2016}.

In martingale language, \eqref{eq:doubling} is closely related to the regularity of the filtration.
Regularity of the filtration is equivalent, in the case of the dyadic filtration on $(\R^n,\mu)$, to the condition $\mu(Q) \sim \mu(\widehat{Q})$ for all cubes in $\D$, where $\hat{Q}$ is the ``parent" dyadic cube of $Q$. Probabilistic tools are usually versatile enough to be applicable to $\sigma$-finite filtered spaces with highly non-regular structure, where the most important martingale inequalities hold true \cites{BG1970,Dav1970}. Therefore, one might be tempted to think that the techniques and results described above are still powerful even when the geometric and measure theoretic properties of the ambient space are not tightly coupled, even if \eqref{eq:doubling} fails. However, when the classical doubling condition of the underlying measure fails, the situation becomes significantly more complicated. The one-third trick is no longer useful in the usual way, and there are limited alternatives available \cite{C2020}. Moreover, direct arguments of sparse domination for Calder\'on-Zygmund operators \cites{CP2019,VZ2018} yield quantitative estimates that seem far from tight. To start, just as in the doubling setting, they do imply weighted inequalities for Calder\'on-Zygmund operators. Recall that a weight is an a.e. positive function $w\in L_{\mathrm{loc}}^1(d\mu)$ that we identify with the measure $d\nu=w d\mu$. The results in \cites{CP2019,VZ2018} imply that a Calder\'on-Zygmund operator is bounded on $L^2(w d\mu)$ whenever $w\in A_2(\mu)$, that is, the following holds:
\begin{equation}\label{eq:A2cond} \tag{A2}
    [w]_{A_2(\mu)} :=\sup_{B \; \mathrm{ball}} \frac{w(B)w^{-1}(B)}{\mu(B)^2} < \infty.
\end{equation}
However, in \cite{To2007}, the necessary and sufficient condition that weights must satisfy so that Calder\'on-Zygmund operators are bounded on $L^2(w d\mu)$, termed $\mathcal{Z}_2(\mu)$, was shown to be strictly weaker than the classical $A_2(\mu)$ condition. Moreover, the issue of a representation theorem along the lines of \eqref{eq:PetermichlRep} or \cite{Hyt2012} remained wide open. In that direction, the study of the operators that arise in the representation formulas is an interesting problem that can inform what, if any, representation theorems one can expect when the measure under consideration is not Lebesgue. Exploring the feasibility of extending this program to the non-doubling setting was one of the motivations of this 
work. We begin this study on $\R$, and point out that there will be additional considerations needed to extend the forthcoming results to higher dimensions.

\subsection*{Sparse domination for Haar shifts with respect to $\mu$} Let $\mu$ be a Radon, atomless measure on $\R$. We use the standard notation for dyadic ancestors: $I^{(j)}$ is the interval from $\D$ that contains $I$ and has sidelength equal to $2^j \ell(I)$. We denote
$$
\D_j(I)=\{J \in \D: J^{(j)} = I\}, \;\mbox{ and also } \D_{\leq j}(I)= \bigcup_{k=0}^j \D_k(I).
$$
Given an $I \in \mathscr{D}$, the Haar function associated to $I$ is constant on the dyadic children of $I$, normalized in $L^2(\mu)$, and has mean value zero:
$$
h_I(x) = \sqrt{\frac{\mu(I_-)\mu(I_+)}{\mu(I)}}\left(\frac{\one_{I-}(x)}{\mu(I_-)}-\frac{\one_{I+}(x)}{\mu(I_+)}\right).
$$
Given nonnegative integers $s$ and $t$, a Haar shift of complexity $(s,t)$ is an operator of the form
$$
T^{s,t}f(x)=\sum_{I \in \D} \sum_{J \in \D_r(I)} \sum_{K \in \D_s(I)} \alpha_{J,K}^{I}  \langle f, h_J \rangle h_K,
$$
where the pairing $\langle \cdot, \cdot\rangle$ (and integral averages $\langle \cdot \rangle_I $) are taken with respect to $\mu$:
$$
\langle f,g\rangle = \int_{\R^n} f(x) g(x) d\mu(x), \quad \langle f \rangle_Q = \frac{1}{\mu(Q)} \int_Q f(x) d\mu(x).
$$
In this language, $\Sh$ is an operator of complexity $(0,1)$. If $\alpha \in \ell^\infty$, then $T^{s,t}$ is bounded on $L^2(\mu)$ regardless of the properties of $\mu$. We will always assume that $\|\alpha\|_{\ell^\infty} \leq 1$. In \cite{LMP2014}, a condition is identified that characterizes $L^p(\mu)$ and weak type $(1,1)$ boundedness of $T^{s,t}$ when $r$ and $s$ are nonzero: if 
$$
m(I) = \frac{\mu(I_-)\mu(I_+)}{\mu(I)},
$$
then $T^{s,t}$ is of weak type $(1,1)$ if and only if, for all $I\in\D$,
\begin{equation} \label{eq:mBalanced} \tag{Balance}
    m(I) \sim m(\widehat{I}),
\end{equation}
where $\widehat{I}$ denotes the dyadic parent of $I$. 
When \eqref{eq:mBalanced} holds and $\mu$ is free of atoms, we will say that $\mu$ is \emph{balanced}\footnote{In \cite{LMP2014}, the authors term these measures $m$-equilibrated, but we prefer the shorter term {\it balanced}.}. The first question that we address in this paper is whether the operators $T^{s,t}$ satisfy sparse bounds for balanced measures $\mu$. In the case of the Lebesgue measure, this is contained in \cites{Hyt2012,CR2016}. We first consider estimates in the dual sense: we look for an inequality like
\begin{equation} \label{eq:sparseNo} \tag{ClasSp}
    \langle T^{s,t}f_1,f_2 \rangle \lesssim \sum_{I \in \Ss} \langle f_1 \rangle_Q \langle f_2 \rangle_Q \mu(Q) =: \A_\Ss(f_1,f_2),
\end{equation}
for nice enough $f_1$ and $f_2$ and a family $\Ss=\Ss(f_1,f_2) \subset \D$ which is sparse in the usual sense: for each $I\in \Ss$, there exists $E_I \subset I$ such that $\mu(E_I) \sim \mu(I)$ and the family $\{E_I\}_{I \in \Ss}$ is pairwise disjoint. The question is natural, since sparse domination methods are usually well adapted to dyadic operators. Notably, the simple argument in \cite{CDO2018} might be expected to go through in the balanced case once the classical Calder\'on-Zygmund decomposition employed there is replaced by the one in \cite{LMP2014}, (or the streamlined version from \cite{CCP2022}). In this direction, our first result is a negative one: we shall show that the classical sparse domination inequality,  \eqref{eq:sparseNo}, must fail, by constructing a balanced measure $\mu$ on the interval $[0,1]$ so that $\Sh$ fails sparse domination. Of course, similar examples can be constructed to disprove sparse domination in general for higher complexity dyadic shifts. 

\subsection*{Modified sparse forms} The above discussion shows that sparse domination, in the classical formulation, must fail for some balanced measures $\mu$. To remedy that, we propose a variant of the sparse form in the following way:
$$
\C_{\Ss}(f_1,f_2) := \sum_{I \in \Ss} \sum_{J \in \Ss \cap \D_{\leq 2}(\widehat{I})} \left[\langle f_1 \rangle_I \langle f_2 \rangle_{J} + \langle f_2 \rangle_I \langle f_1 \rangle_{J}\right] m(I).
$$
The role of the second sum in the definition of $\C_\Ss$ takes into account the interaction of intervals $I$ and its dyadic neighbors, which is necessary for the sparse domination argument to work when the complexity is nonzero. On the other hand, replacing $\mu(I)$ by $m(I)$ on the right hand side helps one to obtain quantitative bounds for $\C_\Ss$, since we have $m(I) \ll \mu(I)$ in general. With this definition, the result that we obtain reads as follows: 
\begin{ltheorem} \label{th:thmA}
Let $\mu$ be balanced, and let $T$ be a Haar shift of complexity $(s,t)$ with $s + t \leq 1$. Then for each pair of compactly supported, bounded nonnegative functions $f_1,f_2$ there exists a sparse collection $\Ss \subset \D$ such that 
$$
| \langle Tf_1,f_2\rangle | \lesssim \A_\Ss(f_1,f_2) + \C_\Ss(f_1,f_2).
$$
\end{ltheorem}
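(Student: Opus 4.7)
The plan is to prove the bound through the standard recursive stopping-time construction. Fix a dyadic interval $Q_0\in\D$ containing the supports of $f_1$ and $f_2$, declare $Q_0\in\Ss$, and let $\mathcal{E}(Q_0)$ consist of the maximal dyadic descendants $P\subsetneq Q_0$ on which $\langle f_1\rangle_P>C\langle f_1\rangle_{Q_0}$ or $\langle f_2\rangle_P>C\langle f_2\rangle_{Q_0}$. For $C$ large enough the weak $(1,1)$ estimate for the dyadic maximal function---valid in any $\sigma$-finite filtered space, and in particular independent of \eqref{eq:mBalanced} and \eqref{eq:doubling}---yields $\mu(\bigcup\mathcal{E}(Q_0))\leq\mu(Q_0)/2$, which is the required sparseness bound. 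The family $\Ss$ is then built by iterating the procedure inside each $P\in\mathcal{E}(Q_0)$.

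Splitting the operator as $T = T^{\mathrm{princ}}_{Q_0} + \sum_{P\in\mathcal{E}(Q_0)}T_P$ according to whether the indexing cube $I$ is strictly contained in some stopping interval, the inner pieces $\langle T_P(f_1\one_P),f_2\one_P\rangle$ are deferred to the recursion. It remains to establish
$$
|\langle T^{\mathrm{princ}}_{Q_0}f_1,f_2\rangle| \lesssim \langle f_1\rangle_{Q_0}\langle f_2\rangle_{Q_0}\mu(Q_0) + \sum_{P\in\mathcal{E}(Q_0)} m(P)\sum_{J\in\D_{\leq 2}(\widehat{P})\cap\Ss}\bigl[\langle f_1\rangle_P\langle f_2\rangle_J + \langle f_2\rangle_P\langle f_1\rangle_J\bigr];
$$
the first summand feeds into the $Q_0$ contribution of $\A_\Ss$ and the second matches exactly the $I=P$ slice of $\C_\Ss$. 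I would split each $f_i=f_i^g+f_i^b$ in the Calder\'on-Zygmund manner with $f_i^g:=f_i\one_{Q_0\setminus\bigcup P} + \sum_{P\in\mathcal{E}(Q_0)}\langle f_i\rangle_P\one_P$ (bounded by $C\langle f_i\rangle_{Q_0}$) and $f_i^b:=\sum_{P\in\mathcal{E}(Q_0)}(f_i-\langle f_i\rangle_P)\one_P$ (mean zero on every stopping interval). The good--good pairing $\langle T^{\mathrm{princ}}_{Q_0}f_1^g,f_2^g\rangle$ is controlled by $L^2(\mu)$-boundedness of $T$ and the trivial $L^2$-estimate on $f_i^g$, supplying the first summand.

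For the remaining pairings, the mean-zero property of $f_i^b$ forces $\langle f_i^b,h_J\rangle=0$ unless $J$ is contained in some $P\in\mathcal{E}(Q_0)$. Since $s+t\leq 1$, in any surviving triple $(I,J,K)$ the indices $J\in\D_s(I)$ and $K\in\D_t(I)$ differ from the principal interval $I$ by at most one dyadic level, so the non-vanishing contributions are localized to $I\in\{P,\widehat{P}\}$ and $J,K\in\{P,P^{\mathrm{sib}},P_\pm,P^{\mathrm{sib}}_\pm\}\subseteq\D_{\leq 2}(\widehat{P})$ for some $P\in\mathcal{E}(Q_0)$. Bounding Haar coefficients by $|\langle f_i,h_J\rangle|\leq\sqrt{m(J)}(\langle f_i\rangle_{J_-}+\langle f_i\rangle_{J_+})$ and absorbing geometric means of the form $\sqrt{m(\widehat{P})m(P)}$ into $m(P)$ via \eqref{eq:mBalanced}, each boundary term reduces to one of the form $m(P)\langle f_1\rangle_J\langle f_2\rangle_{J'}$ with $J,J'\in\D_{\leq 2}(\widehat{P})$.

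The main obstacle I foresee is a careful bookkeeping step to ensure that every boundary average $\langle f_i\rangle_J$ lands on an interval that actually belongs to $\Ss$, as the definition of $\C_\Ss$ demands. Siblings of stopping intervals and children of $P$ need not a priori lie in $\Ss$, so their averages must be dominated either by $\langle f_i\rangle_P$ (using \eqref{eq:mBalanced}), by $\langle f_i\rangle_{Q_0}$ (when the interval fails the stopping criterion), or absorbed into a deeper level of the recursion. The hypothesis $s+t\leq 1$ is precisely what makes this tractable: it prevents the boundary interactions from escaping $\D_{\leq 2}(\widehat{P})$, which is exactly the neighborhood built into $\C_\Ss$.
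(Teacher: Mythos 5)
The core of your argument rests on a Calder\'on--Zygmund decomposition that cannot work for general balanced measures. You take $f_i^g = f_i\one_{Q_0\setminus\bigcup P} + \sum_{P}\langle f_i\rangle_P\one_P$ and assert $\|f_i^g\|_{L^\infty}\lesssim\langle f_i\rangle_{Q_0}$, which would give the needed $L^2$ control on the good part. This is the classical argument for Lebesgue (or dyadically doubling) measure, where maximality of $P$ gives $\langle f_i\rangle_{\widehat P}\leq C\langle f_i\rangle_{Q_0}$ and then $\langle f_i\rangle_P\leq\frac{\mu(\widehat P)}{\mu(P)}\langle f_i\rangle_{\widehat P}\leq 2^n C\langle f_i\rangle_{Q_0}$. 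But a balanced measure need not be dyadically doubling: for the measure constructed in Section~\ref{sec:section1} one has $\mu(\widehat I_k)/\mu(I_k^b)=k\to\infty$, so $\langle f_i\rangle_P$ is not controlled by $\langle f_i\rangle_{Q_0}$, the $L^\infty$ bound on $f_i^g$ fails, and along with it the $L^2$ estimate you invoke for the good--good term (the problematic piece is $\sum_P(\langle f_i\rangle_P)^2\mu(P)$, for which there is no a priori bound in terms of $\langle f_i\rangle_{Q_0}\|f_i\|_{L^1(\mu)}$ in the non-doubling case).

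This is precisely the obstruction that the paper's Lemma~\ref{CZD} is designed to overcome, and it is a different and more fundamental obstacle than the ``bookkeeping'' issue you flag at the end. The paper replaces the classical good function by $g_j=f_j\one_{(\bigcup I_k)^c}+\sum_k\langle f_j\one_{I_k}\rangle_{\widehat I_k}\one_{\widehat I_k}$, whose coefficients are averages over the \emph{parents} of the stopping intervals (and hence controlled by $\lambda_j$), at the price of giving up pointwise control: the parent intervals $\widehat I_k$ overlap, so $g_j$ is not bounded. The key technical contribution of Lemma~\ref{CZD} is the $\BMO$ bound on $g_j$, which via interpolation yields the $L^2$ estimate one actually needs. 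Correspondingly the bad pieces $b_{j,k}=f_j\one_{I_k}-\langle f_j\one_{I_k}\rangle_{\widehat I_k}\one_{\widehat I_k}$ are supported on $\widehat I_k$ and have mean zero on $\widehat I_k$ (not on $I_k$), which is exactly why the cancellation constraint forces $J_t^n\subsetneq I_k^{(2)}$ in Lemma~\ref{lem:IterationLemma} and why the neighborhood $\D_{\leq 2}(\widehat I)$ appears in $\C_\Ss$. Your localization claim that the surviving $J,K$ lie in $\D_{\leq 2}(\widehat P)$ is derived from the wrong (classical) mean-zero property on $P$, so even that part needs to be redone with the correct decomposition. In short: the stopping-time and sparse-family construction in your proposal matches the paper's, but the decomposition of the function is wrong in a way that is not cosmetic, and the $\BMO$ estimate replacing the $L^\infty$ bound is the missing ingredient.
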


We postpone the full statement of Theorem \ref{th:thmA} for Haar shifts of higher complexity to Section \ref{sec:section2}. As is standard, the proof of Theorem \ref{th:thmA} uses the weak $(1,1)$ bound for the maximal operator $\Maximal$ associated
to the dyadic grid $\D$. On the other hand, both forms $\A_\Ss$ and $\C_\Ss$ satisfy the expected weak-type and $L^p(\mu)$ estimates, which is evidence that this definition of the sparse forms is the right one in the context of balanced measures. The broad scheme of the proof of theorem \ref{th:thmA} is similar to arguments that were used in the context of doubling measures like \cites{BFP2016,CCDO2017}, and is most similar to \cite{CDO2018}. However, we need to use a Calder\'on-Zygmund decomposition adapted to general measures, originally inspired by Gundy's decomposition for martingales \cites{LMP2014,CCP2022}. An additional feature that we need to use is a $\BMO$ estimate for the good part of the decomposition, which is a technical novelty key to our approach.

\subsection*{A weighted theory for balanced measures} The main applications of sparse bounds are weighted estimates. The classical sparse domination yields quantitatively sharp estimates for doubling measures. In the case of the Lebesgue measure, both Calder\'on-Zygmund operators and Haar shifts are bounded on $L^2(w)$ if and only if $w \in A_2$. The corresponding condition with respect to a general measure $\mu$ is just \eqref{eq:A2cond}, where we should take the sup with respect to all $Q \in \D$ in the Haar shift case (instead of over balls). As we mentioned before, in Calder\'on-Zygmund case the $A_2(\mu)$ condition is not necessary in general, but it is certainly sufficient \cite{To2007}. In the dyadic setting, the situation is different: we can show that the $A_2(\mu)$ condition is not sufficient for the boundedness of $\Sh$, by constructing a pair $(\mu,w)$ where $\mu$ is balanced, $w\in A_2(\mu)$ and $\Sh$ is not bounded on $L^2(w d\mu)$. Our main weighted result is the identification of the right class of weights that governs the boundedness of Haar shifts. A notable feature of this identification is the fact that it takes into account the higher complexity of the operators that we consider. Moreover, unlike the sparse form, the weight class is the same for all operators of complexity greater than or equal to 1. 

\begin{ltheorem} \label{th:thmB}
Let $\mu$ be balanced. There exists a collection of weights $\AtwoBalanced$ (balanced $A_2$) with the following properties:
\begin{itemize}
    \item $\AtwoBalanced \subseteq A_2(\mu)$.
    \item If $(s,t) \neq (0,0)$, all operators $T^{s,t}$ are bounded on $L^2(w d\mu)$ if and only if $w \in \AtwoBalanced$. 
\end{itemize}
Moreover, there exists a balanced, nondoubling measure $\mu$ so that $\AtwoBalanced \subsetneq A_2(\mu).$
\end{ltheorem}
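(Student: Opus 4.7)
The plan follows the recipe by which a sparse-domination theorem yields sharp weighted estimates: use Theorem~A to read off a candidate class $\AtwoBalanced$, verify sufficiency by bounding $\A_\Ss$ and $\C_\Ss$ on weighted $L^2$, obtain necessity by testing individual Haar shifts, and finally exhibit a non-doubling balanced $\mu$ where the new condition is strictly stronger than $A_2(\mu)$.

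\emph{Definition and sufficiency.} Guided by the shape of $\C_\Ss$, I would define
$$
[w]_{\AtwoBalanced}:=[w]_{A_2(\mu)}+\sup_{I\in\D}\;\sup_{J\in\D_{\leq 2}(\widehat I)}\frac{w(I)\,w^{-1}(J)\,m(I)^2}{\mu(I)^2\mu(J)^2},
$$
with the second summand encoding the cross-interval pairings visible in $\C_\Ss$ (the exact power of $m(I)$ is determined by testing $\C_\Ss$ on the pair $f_1=w^{-1}\mathbf{1}_I$, $f_2=w\mathbf{1}_J$). The inclusion $\AtwoBalanced\subseteq A_2(\mu)$ is then tautological. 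Sufficiency follows from Theorem~A: the classical form $\A_\Ss$ is controlled by the $A_2(\mu)$ summand via a Lerner-type argument, while $\C_\Ss$ is estimated by a term-by-term Cauchy-Schwarz in the weighted pairing that absorbs the prefactor $m(I)/\mu(I)\mu(J)$ into the second summand of $[w]_{\AtwoBalanced}$, followed by a sparse-Carleson embedding on $\{E_I\}$; one uses here that the inner sum over $J\in\D_{\leq 2}(\widehat I)$ has boundedly many terms for each $I$.

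\emph{Necessity.} Fix $I\in\D$ and $J\in\D_{\leq 2}(\widehat I)$. I would construct a Haar shift $T^{s,t}$ whose coefficient $\alpha$ is supported on a single admissible triple chosen so that $T^{s,t}$ reduces, modulo lower-order corrections, to the rank-one map $f\mapsto\langle f,h_{J'}\rangle h_{K'}$, where $J'$ is a child of $I$ and $K'$ is a child of $J$. Testing with $f=w^{-1}\mathbf{1}_I$ and $g=\mathbf{1}_J$ and invoking the Haar normalization $h_{I'}\sim\sqrt{m(I')}/\mu(I'_\pm)$ extracts the proposed functional as a lower bound for the $L^2(wd\mu)$ norm of $T^{s,t}$. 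Since such an $\alpha$ can be produced inside any shift of prescribed complexity $(s,t)\neq(0,0)$ with $\|\alpha\|_{\ell^\infty}\leq 1$, and since the $A_2(\mu)$ summand is already forced by the complexity-$(0,1)$ shift $\Sh$, the same class $\AtwoBalanced$ governs all complexities $\geq 1$.

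\emph{Strict inclusion and main obstacle.} For the final assertion I would build $\mu$ on $[0,1]$ with splittings $\mu(I_-)\ll\mu(I_+)$ along an infinite descending chain, arranged so that \eqref{eq:mBalanced} holds while \eqref{eq:doubling} fails, and then choose $w$ whose local ratios $w(I)/\mu(I)$ and $w^{-1}(I)/\mu(I)$ remain comparable on every $I$ (keeping $w\in A_2(\mu)$) while its cross-averages across a specific sequence of neighbor pairs $(I,J)$ are tuned so that the new supremum is infinite. The subtle step throughout is necessity: one must isolate a single Haar-function interaction inside a shift of prescribed complexity while retaining $\|\alpha\|_{\ell^\infty}\leq 1$, and track the $\sqrt{m(\cdot)}$ normalizations precisely so that the extracted test quantity matches the second summand of $[w]_{\AtwoBalanced}$. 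The strict-inclusion example also demands finely tuned geometric parameters, since one must simultaneously keep $\mu$ balanced, keep $w$ in $A_2(\mu)$, and defeat $\AtwoBalanced$.
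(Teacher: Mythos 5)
Your scheme is the one the paper follows: read off a candidate weight class from the modified sparse form, use the sparse domination (Theorem~A together with its higher-complexity refinement) for sufficiency, test individual Haar shifts for necessity, and appeal to Proposition \ref{BadWeight} for the strict inclusion. The class you write down is equivalent to the one in the paper: your $m(I)^2$ versus the paper's $m(I)\,m(J)$ is harmless because $J\in\D_{\leq 2}(\widehat I)$ forces $\dyad(I,J)\leq 3$, whence $m(I)\sim m(J)$ by \eqref{eq:mBalanced}; and your range of $J$, while not identical to the paper's $\{I,(I^b)_-,(I^b)_+\}\cup\{(\widehat I)^b\}$ rule, yields the same class by the paper's Lemma \ref{lem:ApbvsApN} (the equivalence $[w]_{\ApBalanced}\sim_N [w]_{A_p^N(\mu)}$), which is a step you keep implicit and which is exactly what makes the class \emph{complexity-independent}.

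There is one genuine soft spot in your necessity argument. You assert that ``the $A_2(\mu)$ summand is already forced by the complexity-$(0,1)$ shift $\Sh$.'' This does not follow from a single rank-one test: testing $T=\langle\cdot,h_L\rangle h_{L_-}$ with $f=w^{-1}\one_{L_-}$ yields a bound of the form $m(L)\,m(L_-)\,\mu(L_-)^{-2}\,\langle w\rangle_{L_-}\langle w^{-1}\rangle_{L_-}\lesssim C^2$, which is strictly weaker than $\langle w\rangle_{L_-}\langle w^{-1}\rangle_{L_-}\lesssim C^2$ whenever $m\ll\mu$, i.e.\ precisely in the non-doubling regime. The paper does not attempt this: Lemma \ref{PairBoundBelow} extracts only the ``cross'' conditions for disjoint $J,K$ with $\dyad(J,K)\geq 3$, and the $A_p(\mu)$ part is supplied by the separate, known characterization for \emph{Haar multipliers} (total complexity $(0,0)$) via \cite{TTV}; Corollary \ref{Weighted Characterization} is then a statement about uniform boundedness of $\bigcup_{s+t\leq N}\HS(s,t)$, which contains the multipliers. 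So if you want your necessity to apply to a single fixed $(s,t)\neq(0,0)$, you either need $s=t$ (so the multipliers sit inside $\HS(s,t)$) or you need a genuinely new argument showing that the cross-interval conditions together with the nested ones force $A_2(\mu)$; simply citing $\Sh$ is not enough.

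Two smaller remarks. First, in the necessity step the paper does not use a coefficient sequence supported on a single triple; it keeps $\alpha$ nonzero everywhere but fixes its signs so that every term in the pairing is nonnegative, and then drops all but one term. This avoids your caveat about ``lower-order corrections.'' Second, your sufficiency sketch is correct in spirit but you will want to perform the change of weight $\langle f_1\rangle_I\langle wf_2\rangle_J = \langle wf_1\rangle^{w^{-1}}_I\langle f_2\rangle^w_J\langle w^{-1}\rangle_I\langle w\rangle_J$ before Cauchy--Schwarz (as in Proposition \ref{prop:estimateCNS}); the ``Carleson embedding on $\{E_I\}$'' is really the $L^2$ bound for the weighted dyadic maximal functions $\Maximal^{w}$ and $\Maximal^{w^{-1}}$ combined with Lemma \ref{FairDivision}.
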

We postpone, until section \ref{sec:section3}, the precise definition of the class $\AtwoBalanced(\mu)$, which can be described as a variant of 
\eqref{eq:A2cond} involving averages over dyadic intervals $I, J$ where $J \in \D_{\leq 2}(\widehat{I}) $.
  As one would expect, in case $\mu$ is doubling, we have that $\AtwoBalanced(\mu)=A_2(\mu)$. Our weighted results can be readily generalized to $L^p(w d\mu)$, with a straightforward modification of our $A_2$-type class. Finally, we have a maximal function characterization. We formulate it by identifying a family of variants of the usual dyadic maximal function, that we denote by $\Maximal^j$. $\Maximal^j$ is $L^p(\mu)$-bounded and weak-type $(1,1)$, and satisfies the same sparse domination as the Haar shifts $T^{s,t}$ whenever $r+s=j$. We then use each $\Maximal^j$ to define a family of $A_p$-type classes of weights: we say that $w\in A_1^j(\mu)$ if $\Maximal^jw(x) \lesssim w(x)$, while we say that $w\in A_p^j(\mu)$ if $\Maximal^j:L^p(w d\mu) \to L^p(w d\mu)$. Even though the operators $\Maximal^j$ are not equivalent, the weight classes that they determine are the same; that is, $A_p^j(\mu) = \ApBalanced(\mu)
$
for all $j \geq 1$.

The rest of the paper is organized as follows: Section \ref{sec:section1} contains an example that shows that sparse domination in the usual sense must fail for Haar shifts and some balanced measures, and another one of $w \in A_2(\mu)$ |with $\mu$ balanced| so that $\Sh$ is not bounded on $L^2(w d\mu)$. Section \ref{sec:section2} contains the proof of Theorem \ref{th:thmA} and our sparse domination result for general complexity shifts on $\R$. Section \ref{sec:section3} contains the weighted theory, including the proof of Theorem \ref{th:thmB}.

\subsection*{Acknowledgements} The authors would like to thank Sergei Treil for several valuable discussions. The first named author has been supported by grants CNS2022-135431 and RYC2019-027910-I (Ministerio de Ciencia, Spain). The third named author is supported by grant DMS-2203272 (National Science Foundation).

\section{Failure of usual sparse domination} \label{sec:section1}

We shall work with Borel measures $\mu$ on $\R$ that we will always assume to be balanced. This implies for us that $\mu$ has no atoms -which we shall need in order to use the Carleson packing characterization of the sparse condition below-. Fix $0<\eta<1$. A collection $\Ss \subset \D$ is called $\eta$-sparse (or just sparse) if for each $I \in \Ss$ there exists $E_I \subset I$ so that $E_I \cap E_{I'} = \emptyset$ if $I\neq I'$ and $\mu(E_I) \geq \eta \mu(I)$. According to \cite[Theorem 1.3]{Han2018}, the family $\Ss$ is sparse if and only if it satisfies a the Carleson packing condition
\begin{equation} \label{eq:CarlesonPacking}
\sum_{\substack{J \subset I:\\ J \in \Ss}} \mu(J) \leq C_\mu \mu(I)
\end{equation}
for all $I \in \D$. One important consequence of \eqref{eq:CarlesonPacking} is the fact that the union of two sparse families is again sparse (with a possibly different constant), and this observation we shall use throughout. The sparse operator associated with $\Ss$ is
$$
\A_\Ss f(x) = \sum_{I \in \Ss} \langle f \rangle_I \one_I(x),
$$
while the corresponding bilinear form already appeared in the Introduction:
$$
\A_\Ss(f,g) = \langle \A_\Ss f,g\rangle = \sum_{I \in \Ss} \langle f \rangle_I \langle g \rangle_I \mu(I).
$$
We start constructing such a measure $\mu$ so that $\Sh$ (we omit the dyadic grid dependence from the notation) does not admit sparse domination in the usual dual sense.
 
\begin{prop}\label{prop:SparseFailure} Fix $0 < \eta < 1$. There exists a balanced measure $\mu$ and two sequences of compactly supported functions $\{f_j\},\{g_j\} \subset \Ltwo(\mu)$ such that for all $\eta$-sparse families $\Ss \subset \D$ 
\begin{equation}\label{eq:sparsefailure}
|\langle \Sh f_j, g_j \rangle | \gtrsim j |\A_\Ss(f_j,g_j)|.    
\end{equation}
\end{prop}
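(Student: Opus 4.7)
The plan is to exhibit, for a balanced but strongly non-doubling measure $\mu$ on $[0,1]$, an explicit pair $(f_j,g_j)$ whose dyadic averaging supports are disjoint in a way that makes the sparse form $\A_\Ss$ vanish identically, while the dyadic Hilbert transform still detects a non-zero pairing via the sibling-swapping action $\Sh h_I = h_{I_-} - h_{I_+}$. Once such a pair is in hand, the inequality $|\langle \Sh f_j,g_j\rangle| \gtrsim j|\A_\Ss(f_j,g_j)|$ is trivially verified for every $j$ and every $\eta$-sparse $\Ss$, because the right-hand side is $0$.

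To construct $\mu$, fix a small $\delta\in(0,1)$ and work along the chain $I_k := [0,2^{-k}]$, so that $I_{k+1}=(I_k)_-$ and the light sibling is $I_k^* := (I_k)_+$. Declare $\mu(I_k^*) := \delta\,\mu(I_{k+1})$ and extend $\mu$ on the descendants of each $I_k^*$ as a rescaled copy of Lebesgue measure. Then $m(I_k) = \mu(I_{k+1})\mu(I_k^*)/\mu(I_k) \sim \delta\,\mu(I_{k+1})$, so $m(I_k)/m(\widehat{I_k}) = 1/(1+\delta)$; combined with the doubling structure off the chain, this yields $m(I)\sim m(\widehat{I})$ uniformly, so $\mu$ is balanced but strongly non-doubling along the chain. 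For each $j\geq 1$ set
\[
f_j := \one_{I_j^*}, \qquad g_j := h_{I_{j-1}^*}.
\]
These are bounded, compactly supported in the disjoint dyadic intervals $I_j^*$ and $I_{j-1}^*$, and they lie in $L^2(\mu)$.

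Two computations complete the argument. First, expand $\one_{I_j^*} = \sum_{\ell=0}^{j} c_\ell\, h_{I_\ell}$ modulo a constant, with $c_\ell = h_{I_\ell}|_{I_j^*}\,\mu(I_j^*)$; only chain-ancestor Haars occur, since those are the only dyadic ancestors of $I_j^*$. Applying $\Sh$ term by term via $\Sh h_{I_\ell} = h_{I_{\ell+1}} - h_{I_\ell^*}$ and pairing with $g_j=h_{I_{j-1}^*}$, orthonormality of the Haar system leaves exactly $\langle \Sh f_j,g_j\rangle = -c_{j-1}$, which is non-zero because $I_j^* \subset I_j = (I_{j-1})_-$ forces $h_{I_{j-1}}|_{I_j^*} = \sqrt{m(I_{j-1})}/\mu(I_j) > 0$. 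Second, for any $K\in\D$, $\langle g_j\rangle_K = \langle h_{I_{j-1}^*}\rangle_K$ is non-zero only when $K \subsetneq I_{j-1}^*$ lies inside one of the two children of $I_{j-1}^*$ (every larger $K$ integrates the Haar to zero). Any such $K$ is disjoint from $I_j^*$, so $\langle f_j\rangle_K = 0$; hence the product vanishes for every $K$ and $\A_\Ss(f_j,g_j) = 0$ for every sparse $\Ss$.

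The conceptual hurdle is spotting the right test pair: one needs functions whose $\Sh$-interaction is entirely mediated by an off-chain sibling Haar $h_{I_{j-1}^*}$, which $\Sh$ produces out of the ancestor Haar $h_{I_{j-1}}$ that naturally appears in the Haar expansion of $\one_{I_j^*}$, while keeping the averaging supports of $f_j$ and $g_j$ dyadically disjoint. This sibling coupling is precisely the phenomenon that the modified form $\C_\Ss$ in Theorem \ref{th:thmA} is designed to capture. Verifying the balance condition of $\mu$ at the chain-to-Lebesgue junctions is the only purely technical point.
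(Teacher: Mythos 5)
Your argument does not establish what Proposition~\ref{prop:SparseFailure} is meant to establish, for two interlocking reasons.

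First, the choice $g_j = h_{I_{j-1}^*}$ makes $\A_\Ss(f_j,g_j)$ vanish identically \emph{for any measure whatsoever, including Lebesgue measure}. You correctly compute that every dyadic $K$ has either $\langle g_j\rangle_K=0$ (if $K\supseteq I_{j-1}^*$ or $K$ is disjoint from $I_{j-1}^*$) or $\langle f_j\rangle_K=0$ (if $K\subsetneq I_{j-1}^*$), and indeed $\langle\Sh f_j,g_j\rangle=-\langle f_j,h_{I_{j-1}}\rangle\neq 0$. But this says nothing about non-doubling measures: the very same pair $(f_j,g_j)$ makes the right side of \eqref{eq:sparsefailure} vanish on $(\R,dx)$, where sparse domination famously holds. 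The reason there is no contradiction is that the sparse form in \eqref{eq:sparseNo}, and in Theorem~\ref{th:thmA}, is evaluated on \emph{nonnegative} $f_1,f_2$ (equivalently, on $|f_1|,|f_2|$). A mean-zero $g_j$ exploits a vacuous loophole in the wording of the proposition rather than exhibiting an obstruction to domination, which is what the proposition is there to do. Replacing $g_j$ with $|g_j|=|h_{I_{j-1}^*}|$ (or any nonnegative function) makes $\A_\Ss(f_j,|g_j|)$ comparable to the left side in your setup, and the ``counterexample'' disappears.

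Second, the measure you build is in fact dyadically doubling, not ``strongly non-doubling.'' Setting $\mu(I_k^*)=\delta\,\mu(I_{k+1})$ forces $\mu(I_k)=(1+\delta)\,\mu(I_{k+1})$, hence $\mu(I_k)=(1+\delta)^{-k}$ (up to normalization) and
\[
\frac{\mu(\widehat{I_k^*})}{\mu(I_k^*)}=\frac{\mu(I_{k-1})}{\mu(I_k^*)}=\frac{(1+\delta)^2}{\delta},
\]
which is bounded. For a dyadically doubling $\mu$ the classical sparse bound for $\Sh$ holds for nonnegative functions, so no genuine counterexample with this $\mu$ is possible. The source of the blow-up in the paper's construction is exactly that the right-child ratio $b_k=1/k$ tends to $0$, making $\mu(\widehat{I}_k)/\mu(I_k^b)=k\to\infty$ unbounded; your $\delta$ is fixed, so that mechanism is absent.

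The fix is the one the paper uses: force the sibling ratios to degenerate ($b_k\to 0$ slowly enough that $\mu$ remains balanced and atom-free), take \emph{nonnegative indicator} test functions $f_j=\one_{I_{j-1}^b}$, $g_j=\one_{I_j^b}$ living on disjoint dyadic siblings, and compare. The sibling interaction through $\Sh$ then yields $|\langle\Sh f_j,g_j\rangle|\sim j^{-2}$, whereas a pointwise maximal-function estimate shows $\A_\Ss(f_j,g_j)\lesssim j^{-3}$ for every $\eta$-sparse $\Ss$, giving the factor $j$ honestly. Your structural observation, that the interaction passes through an off-chain sibling Haar, is correct and is indeed the phenomenon $\C_\Ss$ is built to absorb; but the counterexample must be produced with nonnegative data on a genuinely non-doubling balanced measure.
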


 \begin{proof} We take the measure $\mu$ to be the one constructed in \cite{LMP2014}*{Section 4, (a)}. In particular, $\mu$ is supported on the unit interval\footnote{If we want to consider measures supported on all of $\mathbb{R}$, we can just take $\mu$ to be uniform with density $1$ outside $[0,1]$ and the argument still works.} and is defined inductively as follows: let $a_k=1-1/k$ and $b_k=1/k$ for $k \geq 2$. Set $I_k=[0,2^{-k}]$ for $k \geq 0$, and denote by $I_k^b=[2^{-k},2^{-k+1}]$ its dyadic sibling. Set $\mu(I_0)=1$, $\mu(I_1)=\mu(I_1^b)=\frac{1}{2}$ and for $k \geq 2,$ $\mu(I_k)= a_k \mu(I_{k-1})= a_k \mu(\widehat{I}_k)$, so $\mu(I_k^b)=b_k \mu(I_{k-1})$. The construction is finished by declaring that the density of $\mu$ is taken to be uniform on $I_k^b$. $\mu$ is non-dyadically doubling because 
$$
\frac{\mu(\widehat{I}_k)}{\mu(I_k^b)}=\frac{1}{b_k}=k \rightarrow \infty \mbox{ as } k \rightarrow \infty.
$$ 
Clearly, we have $\mu(I_k) \sim \mu(\widehat{I}_k)$ since the sequence $a_k$ is bounded from above and below. Straightforward computations lead to 
 $$m(I_k^b)= \frac{1}{4} b_k \mu(\widehat{I}_k) \sim \frac{\mu(\widehat{I}_k)}{k},\quad m(I_k)= a_{k+1} b_{k+1} a_k \mu(\widehat{I}_k) \sim \frac{\mu(\widehat{I}_k)}{k},$$
which show that $\mu$ is balanced. We also have the estimates
$$ 
\mu(I_k) \sim \frac{1}{k}, \quad \mu(I_k^b) \sim \frac{1}{k^2}.           $$
One can also show that $\mu$ does not have any point masses. Indeed, the only possible point mass is $\delta_0$, but this does not occur since $\mu(I_k) \rightarrow 0$ as $k \rightarrow \infty$ and so $\mu(\{0\})=0$. To prove \eqref{eq:sparsefailure}, we assume $\eta \in (0,1)$ has been fixed and suppress dependence on $\eta$ in the computations that follow. For $j \geq 2$, we take $f_j= \one_{I_{j-1}^b}$ and $g_j= \one_{I_{j}^b}.$ As a preliminary, we notice $f_j$ and $g_j$ have disjoint supports and if $I \in \D$ intersects both $\text{supp} (f_1)$ and $\text{supp} (f_2)$ non-trivially, we must have $I=I_k$ with $0 \leq k \leq j-2.$ We also observe

 \begin{equation} \Maximal f_j(x) = \frac{\mu(I_{j-1}^b)}{\mu(I_{k-1})}, \,  \Maximal g_j(x) =\frac{\mu(I_{j}^b)}{\mu(I_{k-1})}, \, x \in I_{k}^b \text{ for }  0 < k \leq  j-2; \label{MaximalPointwiseBound1} \end{equation}

  \begin{equation} \Maximal f_j(x) = 1, \,  \Maximal g_j(x) = \frac{\mu(I_{j}^b)}{\mu(I_{j-2})}, \, x \in I_{j-1}^b ; \label{MaximalPointwiseBound2} \end{equation}

    \begin{equation} \Maximal f_j(x) = \frac{\mu(I_{j-1}^b)}{\mu(I_{j-2})}, \,  \Maximal g_j(x) = 1, \, x \in I_{j}^b ; \label{MaximalPointwiseBound3} \end{equation}

 \begin{equation}  \Maximal f_j(x) = \frac{\mu(I_{j-1}^b)}{\mu(I_{j-2})}, \,  \Maximal g_j(x) =\frac{\mu(I_{j}^b)}{\mu(I_{j-1})}, \, x \in I_{k}^b \text{ for }  k > j  \label{MaximalPointwiseBound4}  \end{equation}

 Using the sparsity of $\Ss$ and equations \eqref{MaximalPointwiseBound1} through \eqref{MaximalPointwiseBound4}, we can estimate the sum directly:

 \begin{align*}
 \sum_{I \in \Ss}  \langle f_j \rangle_{I}  \langle g_j \rangle_{I} \mu(I)  & \lesssim \sum_{I \in \Ss}  \langle f_j \rangle_{I}  \langle g_j \rangle_{I} \mu(E_I) \\
& \leq  \int_{[0,1]} \Maximal f_j \cdot  \Maximal g_j\, d\mu   \\
 & \leq \sum_{k=1}^{\infty} \int_{I_k^b} \Maximal f_j \cdot  \Maximal g_j \, d\mu \\
 & = \sum_{k=1}^{j-2} \left( \frac{\mu(I_{j-1}^b)}{\mu(I_{k-1})} \right) \cdot \left(\frac{\mu(I_{j}^b)}{\mu(I_{k-1})}\right)  \cdot   \mu(I_k^b) + \left( \frac{\mu(I_j^b)}{\mu(I_{j-2})}\right) \cdot \mu(I_{j-1}^b)\\
 & +  \left( \frac{\mu(I_{j-1}^b)}{\mu(I_{j-2})}\right) \cdot \mu(I_{j}^b) + \sum_{k=j+1}^{\infty}  \left( \frac{\mu(I_{j-1}^b)}{\mu(I_{j-2})} \right) \cdot \left(\frac{\mu(I_{j}^b)}{\mu(I_{j-1})}\right)  \cdot   \mu(I_k^b) \\
 & \lesssim  \left(\frac{1}{j^4} \sum_{k=0}^{j-2} 1\right) + \frac{1}{j^3} + \frac{1}{j^3}+ \left(\frac{1}{j^2}\sum_{k=j+1}^{\infty} \frac{1}{k^2} \right) \lesssim \frac{1}{j^3}.
 \end{align*}

 Now we turn to a lower bound for $|\langle \Sh f_j, g_j \rangle|.$ Given $I \in \D$, denote by $S_I$ the simple shift $S_I f= \langle f, h_I \rangle (h_{I_{-}}-h_{I_{+}}).$ Then, by definition and using the support properties of $f_j$ and $g_j,$ we have $\langle S_I f_j, g_j \rangle= \langle f_j, h_I \rangle \cdot \langle h_{I_{-}}-h_{I_{+}}, g_j \rangle \neq 0 $ if and only if $I= I_k$ with $0 \leq k \leq j-2.$ Therefore, we may write
 $$
 \langle \Sh f_j, g_j \rangle = \sum_{k=0}^{j-2} \langle S_{I_k} f_j, g_j \rangle = \sum_{k=0}^{j-2}  \langle f_j, h_{I_k} \rangle \cdot \langle h_{I_{k+1}}- h_{I_{k+1}^b}, g_j \rangle.
 $$
For the last term in the sum, we have 
$$\langle f_j, h_{I_{j-2}} \rangle = -\sqrt{m(I_{j-2})} \frac{\mu(I_{j-1}^b)}{\mu(I_{j-1}^b)}=-\sqrt{m(I_{j-2})},
$$ 
while
 $$
 \langle h_{I_{j-1}}- h_{I_{j-1}^b}, g_j \rangle = \langle h_{I_{j-1}}, g_j \rangle= - \sqrt{m(I_{j-1})}.
 $$

 Therefore, $$\langle f_j, h_{I_{j-2}} \rangle \cdot \langle h_{I_{j-1}}- h_{I_{j-1}^b}, g_j \rangle= \sqrt{m(I_{j-1})m(I_{j-2}}) \sim \frac{\mu(I_j)}{j} \sim \frac{1}{j^2}.$$
 Notice that all the other terms in the summation will similarly be positive, because if $k<j-2,$ $f_j$ is supported entirely on the left half of $I_k$ and $g_j$ is supported on the left half of $I_{k+1}.$ Therefore, we need not consider the other terms and \eqref{eq:sparsefailure} follows immediately. 
\end{proof}

\begin{rem}
If one were instead to consider the measure $\mu'$ defined by $b_k'=\frac{1}{k^2}$ with parallel construction to Proposition \ref{prop:SparseFailure}, then $\mu'$ would have a point mass at $0$. In this case one can actually prove the much stronger result that the bilinear form $|\langle \Sh f_1, f_2 \rangle|$ cannot be controlled by $ \sum_{I \in \Ss} \langle f_1 \rangle_I \langle f_2 \rangle_I \mu(I)$ for \emph{any} dyadic collection $\Ss$, let alone a sparse one.
\end{rem}

Our next construction uses the same measure $\mu$ that we used in Proposition \ref{prop:SparseFailure} to show that the class $A_2(\mu)$ is not the right one to study $\Ltwo$-boundedness for higher complexity shifts. Again, we exemplify using $\Sh$. 

\begin{prop}\label{BadWeight}
There exists a balanced measure $\mu$ and a weight $w \in A_2(\mu)$ so that $\Sh$ is unbounded on $\Ltwo(w  d \mu)$.    
\end{prop}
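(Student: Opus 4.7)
The plan is to retain the measure $\mu$ from Proposition \ref{prop:SparseFailure} and construct a weight with isolated ``spikes'' on a lacunary subsequence of the small children $\{I_{2^j}^b\}_{j\geq 1}$. Concretely, set
$$
w := 1 + \sum_{j \geq 1} (2^j - 1)\, \one_{I_{2^j}^b},
$$
so that $w\equiv 2^j$ on $I_{2^j}^b$ and $w\equiv 1$ on the rest of $\mathrm{supp}(\mu)$.

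First I would verify $w\in A_2(\mu)$. Every dyadic $Q$ falls into one of three cases: either $Q$ is disjoint from all spikes (so $w\equiv 1$ on $Q$ and the product of averages is $1$), $Q$ is contained in a single spike (so $w$ is constant on $Q$ and the product is $1$), or $Q=I_m$ for some $m\geq 0$. In the last case, with $k_0 := \lceil \log_2(m+1)\rceil$ the spikes contained in $I_m$ are exactly $\{I_{2^j}^b\}_{j\geq k_0}$; using $\mu(I_m) \sim 1/m$ and $\mu(I_{2^j}^b) \sim 4^{-j}$ the geometric sum yields
$$
\langle w\rangle_{I_m} \,\sim\, 1 + m\sum_{j \geq k_0} 2^{-j} \,\sim\, 1 + m/2^{k_0} \,\lesssim\, 1,
$$
because $m < 2^{k_0}$ by the choice of $k_0$. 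An analogous computation produces $\langle w^{-1}\rangle_{I_m}\in[1/2,1]$ for $m$ large, giving $[w]_{A_2(\mu)} < \infty$.

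To show that $\Sh$ is unbounded on $L^2(w\,d\mu)$, I would exploit the bilinear duality
$$
\|\Sh\|_{L^2(w\,d\mu) \to L^2(w\,d\mu)} \,=\, \sup_{f,g}\frac{|\langle \Sh f, g\rangle|}{\|f\|_{L^2(w\,d\mu)}\,\|g\|_{L^2(w^{-1}\,d\mu)}},
$$
tested on $f_j := \one_{I_{N_j-1}^b}$ and $g_j := \one_{I_{N_j}^b}$ with $N_j := 2^j$. The positivity argument in the proof of Proposition \ref{prop:SparseFailure} applies verbatim and produces $|\langle \Sh f_j,g_j\rangle|\gtrsim \sqrt{m(I_{N_j-1})\,m(I_{N_j-2})}\sim N_j^{-2}$. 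Since $N_j - 1$ is never a power of $2$, $w\equiv 1$ on $I_{N_j-1}^b$ and therefore $\|f_j\|_{L^2(w\,d\mu)}^2 = \mu(I_{N_j-1}^b) \sim N_j^{-2}$; whereas $w\equiv N_j$ on $I_{N_j}^b$ gives $\|g_j\|_{L^2(w^{-1}\,d\mu)}^2 = N_j^{-1}\mu(I_{N_j}^b) \sim N_j^{-3}$. Substituting,
$$
\frac{|\langle \Sh f_j,g_j\rangle|}{\|f_j\|_{L^2(w\,d\mu)}\,\|g_j\|_{L^2(w^{-1}\,d\mu)}} \,\gtrsim\, \frac{N_j^{-2}}{N_j^{-1}\,N_j^{-3/2}} \,=\, N_j^{1/2} \,\to\, \infty,
$$
so $\Sh$ cannot be bounded on $L^2(w\,d\mu)$.

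The main obstacle is calibrating the two scales built into $w$: to remain inside $A_2(\mu)$ the spike heights $\lambda_j = N_j$ must be compensated by how rarely the spikes occur, which forces the lacunary choice $N_j = 2^j$; but one also needs the \emph{adjacent} small child $I_{N_j-1}^b$ to carry weight $1$ in order to preserve the lower bound on $\langle \Sh f_j, g_j\rangle$ from Proposition \ref{prop:SparseFailure}, and this is automatic precisely because $N_j - 1$ avoids every power of two.
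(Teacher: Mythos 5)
Your proof is correct and takes essentially the same approach as the paper's: the paper uses the same measure $\mu$, also puts spikes on the lacunary sequence $\{I_{2^k}^b\}_{k\geq1}$ (though pointing down, $w=2^{-k/2}$ there, rather than up), checks $A_2(\mu)$ by the same geometric-series argument, and obtains unboundedness from the same single-term lower bound on $\langle \Sh f, g\rangle$ borrowed from Proposition~\ref{prop:SparseFailure}. The only difference is bookkeeping in the duality: you test $\langle \Sh f_j, g_j\rangle$ against $\|g_j\|_{L^2(w^{-1}\,d\mu)}$ directly, whereas the paper folds $w^{-1}$ into the test function $f_k=w^{-1}\one_{I_{2^k}^b}$ and pairs with $wg_k$ — these are equivalent.
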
 

\begin{proof}
Define $\mu$ as in the proof of Proposition \ref{prop:SparseFailure}. Define the weight $w$ as follows: 
$$
w(x)= \begin{cases} 2^{-\frac{k}{2}}, & x \in I_{2^k}^b \text{ for $k \geq 1$,} \\ 1, & \text{otherwise.} \end{cases}
$$

We claim $w \in A_2(\mu)$. To see this, first note that if $I$ is contained in $I_{k}^b$ for some $k$, then $\langle w \rangle_{I} \langle w^{-1} \rangle_{I}=1.$ The only other case to consider is $I=I_\ell$ for some $\ell$. We will use the simple facts that $\sum_{j > \ell}^{\infty} \mu(I_j^b)= \mu(I_{\ell})$ and $\mu(I_j) \leq \mu(I_k)$ if $j \geq k.$  We estimate
\begin{align*}
\langle w^{-1} \rangle_{I_\ell} & = \frac{ \sum_{ \substack{j> \ell: \\ j \neq 2^k \text{ for any $k$} }} \mu(I_{j}^b)+ \sum_{ \substack{k:\\ 2^k > \ell }} 2^{\frac{k}{2}} \mu(I_{2^k}^b)}{\mu(I_\ell)}\\
& \leq 1+  \frac{ \sum_{ \substack{k:\\ 2^k > \ell }} 2^{-\frac{k}{2}} \mu(I_{2^k-1})}{\mu(I_{\ell})} \lesssim 1,
\end{align*} 
using $\ell\mu(I_\ell)=\mu(I_{\ell+1})$. The implicit constant is independent of $\ell.$ The estimate $ \langle w \rangle_{I_{\ell}} \lesssim 1$ is even easier, and so $\langle w \rangle_{I_\ell} \langle w^{-1} \rangle_{I_\ell} \lesssim 1$ in this case as well, yielding the claim. However, notice that  
$$
\langle w^{-1} \rangle_{I_{2^k}^b} \langle w \rangle_{I_{2^k+1}^b}=2^{\frac{k}{2}} \rightarrow \infty \mbox{ as } k \rightarrow \infty.
$$ 
This behavior of the weight will lead to a contradiction: consider test functions $f_k \equiv w^{-1} \one_{I_{2^k}^b}$ and $g_k \equiv \one_{I_{2^k+1}^b}.$ We compute 
$$
\|f_k\|_{\Ltwo(w d\mu)}= \left(\int_{I_{2^k}^b} w^{-1} \, d\mu\right)^{\frac{1}{2}}= 2^{\frac{k}{4}} \mu(I_{2^k}^b)^{\frac{1}{2}},
$$ 
and similarly we have 
$$
\|g_k\|_{\Ltwo(w d\mu)}= \mu(I_{2^k+1}^b)^{\frac{1}{2}}.
$$ 
We now obtain a lower bound for $|\langle \Sh f_k, w g_k \rangle |$ using similar considerations as in the proof of Proposition \ref{prop:SparseFailure}:

\begin{align*}
|\langle \Sh f_k, w \, g_k \rangle | & = \left| \sum_{j=0}^{2^k-1} \langle S_{I_j} f_k, w \, g_k  \rangle \right| \\
& \geq |\langle f_k, h_{I_{2^k-1}}\rangle| \cdot | \langle h_{I_{2^k}}- h_{I_{2^k}^b} , w  \, g_k  \rangle|\\
& = 2^{\frac{k}{2}} \sqrt{m(I_{2^k-1})} \cdot \sqrt{m(I_{2^k})}\\
&  \sim 2^{\frac{k}{2}} \mu(I_{2^k}^b)^{\frac{1}{2}} \mu(I_{2^k+1}^b)^{\frac{1}{2}}\\
& = 2^{\frac{k}{4}} \|f_k\|_{\Ltwo(w d\mu)} \|g_k\|_{\Ltwo(w d\mu)}. 
\end{align*}
This shows that $\Sh$ is unbounded on $\Ltwo(w d\mu).$
\end{proof}

\section{Modified Sparse Domination and Consequences} \label{sec:section2}


\subsection{Modified Sparse forms and Calder\'on-Zygmund estimates}

Proposition \ref{prop:SparseFailure} shows that usual sparse forms are not enough to achieve sparse domination for Haar shifts in general. To remedy this, we introduce other forms that reflect the complexity of the operator that we want to dominate. The dyadic distance between two intervals $I, J \in \D$ that share a common ancestor is defined as follows:
\begin{equation*} 
\dyad(I,J):= \min_{(s,t): I^{(s)}= J^{(t)}} (s+t).       \end{equation*}
If $I,J$ do not share a common ancestor, then by convention we let $\dyad(I,J)=\infty.$ Notice that $\dyad(I,J)=0$ if and only if $I=J$. For any $I \in \D$, we let 
$C_N$ the number of $J\in \D$ such that $\dyad(I,J) \leq N+2$. $C_N$ does not depend on $I$. For convenience, we enumerate the dyadic intervals $J \in \D$ that satisfy  $2<\dyad(I,J) \leq N+2$ and $J \cap I = \emptyset$  as $c_1(I),c_2(I),\cdots, c_{N'}(I)$, where $N'$ is a constant depending only on $N$. We will suppose that we have fixed an arbitrary enumeration function for each $I \in \D$ and abuse notation by writing $c_j$ to represent all enumerations for all intervals. Note that given a fixed $J \in \D$, there are at most $N'$ intervals $I$ satisfying $c_j(I)=J$ for some $j$. This fact will be used in various estimates without further comment. Given $\Ss \subset \D$, $N \in \mathbb{N}$, and $f_1,f_2 \in \Ltwo(\mu)$, we define 
\begin{equation*} 
\mathcal{C}_{\Ss}^N(f_1,f_2):= \sum_{\substack{J, K \in \Ss:\\ 2<\dyad(J,K) \leq N+2,\\ J \cap K= \emptyset}} \langle f_1 \rangle_{J} \langle f_2 \rangle_{K}\, \, \sqrt{m(J)} \sqrt{m(K)}.
\end{equation*} 

We will later show that the forms $\mathcal{C}_{\Ss}^N$ are $\Lp$-bounded and of weak-type $(1,1)$. What we show next is a technical lemma that justifies considering only intervals at distance greater than $2$ and disjoint in the definition.

\begin{lem}\label{lem:SparseReduction}
Let $N\in\mathbb{N}$ and $\Ss \subset \D$ be $\eta$-sparse. There exist $0<\eta'\leq \eta$ and $\Ss' \supseteq \Ss$ $\eta'$-sparse such that 
$$
    \sum_{\substack{J, K \in \Ss:\\ 1 \leq \dyad(J,K) \leq N+2}} \langle f_1 \rangle_{J} \langle f_2 \rangle_{K}\, \, \sqrt{m(J)} \sqrt{m(K)} \lesssim \mathcal{A}_{\mathcal{S'}}(f_1, f_2)+  \mathcal{C}_{\mathcal{S'}}^N(f_1,f_2)
$$



\end{lem}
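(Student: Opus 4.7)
The plan is to split the sum on the left-hand side into three disjoint contributions according to the geometry of the pair $(J,K)$: (i) nested pairs, i.e., $J \subsetneq K$ or $K \subsetneq J$; (ii) disjoint pairs with $\dyad(J,K) \leq 2$; and (iii) disjoint pairs with $2 < \dyad(J,K) \leq N+2$. Group (iii) is literally $\C_{\Ss}^N(f_1,f_2) \leq \C_{\Ss'}^N(f_1,f_2)$ for any enlargement $\Ss' \supseteq \Ss$. A short case check (no disjoint pair has $\dyad = 1$, since that forces nesting, and among the $\dyad=2$ configurations $(s,t)\in\{(0,2),(1,1),(2,0)\}$ only $(1,1)$ is compatible with $J \cap K = \emptyset$) shows that group (ii) consists exclusively of pairs of dyadic siblings, i.e., $J \neq K$ with $\widehat{J} = \widehat{K}$.

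For the nested contribution, fix $j \in \{1,\ldots,N+2\}$ and $K \in \Ss$, and consider $J \in \Ss$ with $J^{(j)} = K$. Iterating \eqref{eq:mBalanced} $j$ times gives $m(J) \sim_N m(K)$, and combined with the AM-GM bound $m(J) \leq \mu(J)/4$ this yields
$$
\langle f_1 \rangle_J \langle f_2 \rangle_K \sqrt{m(J) m(K)} \lesssim_N \langle f_2 \rangle_K \int_J f_1 \, d\mu.
$$
Since $\D_j(K)$ tiles $K$ into pairwise disjoint pieces, summing over $J \in \Ss \cap \D_j(K)$ gives at most $\langle f_2 \rangle_K \int_K f_1 \, d\mu = \langle f_1 \rangle_K \langle f_2 \rangle_K \mu(K)$. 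Summing again over $K \in \Ss$ and $1 \leq j \leq N+2$ produces the bound $\lesssim_N \A_\Ss(f_1,f_2) \leq \A_{\Ss'}(f_1,f_2)$. The reverse-nested case is identical upon exchanging $f_1 \leftrightarrow f_2$.

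For the sibling contribution, when $J,K \in \Ss$ satisfy $\widehat{J} = \widehat{K}$ and $J \neq K$, balance gives $\sqrt{m(J)m(K)} \sim m(\widehat{J}) = \mu(J)\mu(K)/\mu(\widehat{J})$, and therefore
$$
\langle f_1 \rangle_J \langle f_2 \rangle_K \sqrt{m(J) m(K)} \lesssim \frac{1}{\mu(\widehat{J})} \int_J f_1 \, d\mu \cdot \int_K f_2 \, d\mu \leq \langle f_1 \rangle_{\widehat{J}} \langle f_2 \rangle_{\widehat{J}} \mu(\widehat{J}).
$$
Setting $\Ss' := \Ss \cup \{P \in \D : P_-, P_+ \in \Ss\}$, the sibling sum is therefore dominated by $2\,\A_{\Ss'}(f_1,f_2)$. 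Sparsity of $\Ss'$ is checked through \eqref{eq:CarlesonPacking}: for any $I \in \D$,
$$
\sum_{P \in \Ss' \setminus \Ss,\; P \subset I} \mu(P) \;=\; \sum_{\substack{P \subset I \\ P_-, P_+ \in \Ss}} \bigl(\mu(P_-) + \mu(P_+)\bigr) \;\leq\; 2 \sum_{L \in \Ss,\; L \subset I} \mu(L) \;\leq\; 2 C_\mu \mu(I),
$$
so $\Ss'$ satisfies \eqref{eq:CarlesonPacking} with constant at most $3 C_\mu$, and is $\eta'$-sparse for some $\eta' \in (0, \eta]$ by \cite[Theorem 1.3]{Han2018}.

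The main obstacle is precisely this sparsity verification for $\Ss'$: since $\mu$ is not assumed doubling, enlarging $\Ss$ by generic dyadic ancestors can cause arbitrary blow-up of the Carleson constant. The key trick is to add parents only when \emph{both} dyadic children already lie in $\Ss$, so that the additive identity $\mu(P) = \mu(P_-) + \mu(P_+)$ converts the newly added mass into at most twice the original packing budget.
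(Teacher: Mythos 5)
Your proof is correct and follows essentially the same line as the paper's: split by geometry into nested, sibling, and $\dyad>2$ disjoint pairs, absorb the first two into $\A_{\Ss'}$ using the balance condition and $m \leq \mu$, and enlarge $\Ss$ by the parents $\widehat J$ of sibling pairs $J, J^b \in \Ss$. Your explicit Carleson-packing verification for $\Ss'$ via $\mu(P)=\mu(P_-)+\mu(P_+)$ is the step the paper leaves to the reader, and you have it right.
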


\begin{proof}
Since we are going to construct $\Ss'$ so that $\Ss \subseteq \Ss'$, we only need to bound terms arising from pairs $(J,K)$ such that either $\dyad(J,K)=2$ and $J \cap K=\emptyset$, or $J \cap K \in \{J,K\}$ and $\dyad(J,K)\leq N+2$. In the latter case, if $J=K^{(k)}$ for some $k \leq N+2$ -say-, we estimate
$$
\langle f_1 \rangle_{J} \langle f_2 \rangle_{K}\, \, \sqrt{m(J)} \sqrt{m(K)} \lesssim_k \langle f_1 \rangle_{J} \, \langle f_2 \rangle_{J} \, \, \mu(J),
$$
because $m(J) \sim m(K)$ and $m(J) \leq \mu(J)$. In the former case, we must have $K=J^b$. If we suppose $J$ and $J^b$ both belong to $\Ss$, then

$$\langle f_1 \rangle_{J} \langle f_2 \rangle_{J^b}\, \, \sqrt{m(J)} \sqrt{m(J^b)} \lesssim \langle f_1 \rangle_{\widehat{J}} \, \langle f_2 \rangle_{\widehat{J}} \, \, \mu(\widehat{J}),$$
where the implicit constant is independent of $J$ because each dyadic interval has exactly two children. We define now $\Ss'$ as the union of the family $\Ss$ and the intervals $\widehat{J}$ for each $J \in \Ss$ such that $J^b \in \Ss$. It can be seen, using \eqref{eq:CarlesonPacking} that $\Ss'$ is $\eta'$-sparse for some $\eta'=\eta'(\eta,N)$, and the assertion follows.
\end{proof}

To prove Theorem \ref{th:thmA} we will perform Calder\'on-Zygmund decompositions at different heights for two functions $f_1$ and $f_2$ simultaneously. In the Lebesgue measure case, the Calder\'on-Zygmund decomposition yields $L^\infty$ estimates for the good part of the function, but that is unavailable in our case, so we must content ourselves with a $\BMO$ estimate. We use the standard martingale $\BMO$ norm (associated with the filtration generated by $\D$), which is 
$$
\|f\|_{\BMO}:=\sup_{I \in \D} \frac{1}{\mu(I)} \int_{I} |f -\langle f \rangle_{\widehat{I}}| \, d\mu.
$$                    

\begin{lem}\label{CZD} Let $0\leq f_1, f_2 \in \Lone(\mu)$ be supported on $I \in \D$ and $\lambda_1, \lambda_2>0$. Then there exist functions $g_j,b_j$ such $f_j=g_j+b_j$ for $j=1,2$ and satisfying the following properties:

\begin{enumerate}
\item \label{item:1} There exist pairwise disjoint, dyadic intervals $\{I_k\} \subset \D(I)$ so that $b_j= \sum_{k=1}^{\infty}b_{j,k}$ where each $b_{j,k}$ is supported on $\widehat{I}_k$, $\int_{\widehat{I}_k}b_{j,k}=0$, and $\|b_{j,k}\|_{\Lone(\mu)} \lesssim \int_{I_k} |f_j| \,d \mu$ for $j=1,2.$ In particular, for $j=1,2$ we have
$$b_{j,k}= f_j \one_{I_k}- \langle f_j \one_{I_k} \rangle_{\widehat{I}_k} \one_{\widehat{I}_k}.$$
\item \label{item:2} For $j=1,2$, the function $g_j \in \Lp(\mu)$ for all $1 \leq p< \infty$ and satisfies $\|g_j\|_{\Lp(\mu)}^p \leq C_p \lambda_j^{p-1} \|f_j\|_{\Lone(\mu)}.$
\item \label{item:3} For $j=1,2$ the function $g_j \in \BMO$ and $\|g_j\|_{\BMO}\leq \lambda_j$.  

\end{enumerate}
\end{lem}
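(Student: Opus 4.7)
The plan is to perform a simultaneous stopping-time decomposition for $f_1$ and $f_2$. Let $\{I_k\}\subset \D(I)$ be the maximal dyadic subintervals of $I$ on which $\langle f_1\rangle_{I_k}>\lambda_1$ or $\langle f_2\rangle_{I_k}>\lambda_2$; these are pairwise disjoint, and by maximality every proper dyadic ancestor $A$ of any $I_k$ satisfies $\langle f_j\rangle_A\leq \lambda_j$ for $j=1,2$. Applied to the dyadic parent this yields the key inequality
\[
c_{j,k}:=\langle f_j\one_{I_k}\rangle_{\widehat{I}_k}\leq \langle f_j\rangle_{\widehat{I}_k}\leq \lambda_j,
\]
which drives the whole argument. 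With $b_{j,k}:=f_j\one_{I_k}-c_{j,k}\one_{\widehat{I}_k}$ and $g_j:=f_j-\sum_k b_{j,k}$, item~\ref{item:1} is immediate from the definitions: $\int_{\widehat{I}_k} b_{j,k}\,d\mu=\int_{I_k}f_j\,d\mu-c_{j,k}\mu(\widehat{I}_k)=0$ and $\|b_{j,k}\|_{\Lone(\mu)}\leq 2\int_{I_k}f_j\,d\mu$.

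Rearranging yields the explicit formula $g_j=f_j\one_G+\sum_k c_{j,k}\one_{\widehat{I}_k}$ with $G:=I\setminus\bigcup_k I_k$. For item~\ref{item:2}, I would split $\|g_j\|_{\Lp(\mu)}^p$ into two pieces. On $G$, dyadic martingale convergence together with the fact that every dyadic interval containing a point of $G$ has $f_j$-average at most $\lambda_j$ (by maximality) gives $f_j\leq \lambda_j$ $\mu$-a.e.\ on $G$, so $\int_G f_j^p\,d\mu\leq \lambda_j^{p-1}\|f_j\|_{\Lone(\mu)}$. For the second piece $\|\sum_k c_{j,k}\one_{\widehat{I}_k}\|_{\Lp(\mu)}^p$ I would induct on $p$: the $p=1$ case is $\sum_k c_{j,k}\mu(\widehat{I}_k)=\sum_k\int_{I_k}f_j\,d\mu\leq \|f_j\|_{\Lone(\mu)}$, while the step follows from the recursion
\[
\int\Big(\sum_k c_{j,k}\one_{\widehat{I}_k}\Big)^p d\mu=\sum_m c_{j,m}\int_{\widehat{I}_m}\Big(\sum_k c_{j,k}\one_{\widehat{I}_k}\Big)^{p-1}d\mu
\]
combined with the Carleson-type embedding $\sum_{k:\widehat{I}_k\subseteq \widehat{I}_m}\int_{I_k}f_j\,d\mu\leq \int_{\widehat{I}_m}f_j\,d\mu\leq \lambda_j\mu(\widehat{I}_m)$, which comes from maximality applied at $\widehat{I}_m$.

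For item~\ref{item:3} the decisive identity is
\[
\langle g_j\rangle_{\widehat{J}}=\langle f_j\rangle_{\widehat{J}}+\sum_{m:\widehat{I}_m\supsetneq \widehat{J}} c_{j,m},\qquad J\in\D,
\]
obtained by evaluating $\int_{\widehat{J}}b_j\,d\mu$: the $b_{j,m}$ with $\widehat{I}_m\subseteq \widehat{J}$ integrate to zero on $\widehat{J}$, whereas for $\widehat{I}_m\supsetneq \widehat{J}$ disjointness of the stopping family forces $I_m\cap \widehat{J}=\emptyset$, and so $b_{j,m}\equiv -c_{j,m}$ on $\widehat{J}$. The consequence is that the tail $\sum_{m:\widehat{I}_m\supsetneq \widehat{J}}c_{j,m}$ cancels exactly in $g_j(x)-\langle g_j\rangle_{\widehat{J}}$ for $x\in J$. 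A short case analysis (whether $\widehat{J}$ lies inside some $I_k$, is disjoint from all of them, or strictly contains one) then reduces the BMO estimate to bounds on $\langle f_j\rangle_J,\langle f_j\rangle_{\widehat{J}}$ (both $\leq \lambda_j$ by maximality) together with the local contribution $\frac{1}{\mu(J)}\sum_{m:\widehat{I}_m\subseteq J} c_{j,m}\mu(\widehat{I}_m)\leq \lambda_j$, controlled by the same Carleson embedding.

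The main technical obstacle is item~\ref{item:2}. In the nondoubling regime the classical pointwise bound $\|g_j\|_\infty\lesssim \lambda_j$ fails in general, so the direct $L^1$-to-$L^\infty$ interpolation route is unavailable; extracting the sharp dependence $\lambda_j^{p-1}\|f_j\|_{\Lone(\mu)}$ forces the recursion above to be pushed through carefully, fully exploiting the inequality $c_{j,k}\leq \lambda_j$ and the possibly arbitrarily deep nested structure of the parents $\widehat{I}_k$.
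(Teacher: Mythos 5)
Your stopping--time construction, the formulas for $b_{j,k}$ and $g_j$, and the BMO argument for item~\eqref{item:3} match the paper's proof very closely: the paper also selects the maximal $I_k$ where one of the two averages exceeds its threshold, splits $g_j = f_j\one_{(\cup I_k)^c} + \sum_k \langle f_j\one_{I_k}\rangle_{\widehat{I}_k}\one_{\widehat{I}_k}$, and proves the BMO estimate by exploiting exactly the cancellation you identify, namely that the ``ancestor tail'' $\sum_{m:\widehat{I}_m\supsetneq \widehat{J}} c_{j,m}$ appears identically in $g_j|_J$ and in $\langle g_j\rangle_{\widehat{J}}$ and therefore drops out of the oscillation. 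One small imprecision: your displayed identity for $\langle g_j\rangle_{\widehat{J}}$ is claimed for all $J\in\D$ but actually fails when $J$ is properly contained in some selected $I_m$ (then $\widehat{J}\subseteq I_m$ and $\langle b_{j,m}\rangle_{\widehat{J}}=\langle f_j\rangle_{\widehat{J}}-c_{j,m}$, not $-c_{j,m}$); this is harmless because on such $J$ the function $g_j$ is constant and the oscillation vanishes, which is what your case analysis is meant to handle, but the identity should be stated only for $J$ not properly contained in any $I_m$.

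The genuine divergence is in item~\eqref{item:2}. The paper proves item~\eqref{item:3} first and then obtains item~\eqref{item:2} as an immediate corollary of $L^1$--BMO interpolation (together with $\|g_j\|_{\Lone(\mu)}\lesssim\|f_j\|_{\Lone(\mu)}$), which is clean and automatic once John--Nirenberg is available for martingale BMO. You instead attempt a direct $L^p$ estimate. Your recursion
$\int h^p\,d\mu=\sum_m c_{j,m}\int_{\widehat{I}_m} h^{p-1}\,d\mu$
with $h=\sum_k c_{j,k}\one_{\widehat{I}_k}$ is correct for integer $p$, but the ``step'' as you sketch it does not close: the local integral $\int_{\widehat{I}_m} h^{p-1}$ sees the unbounded constant $\sum_{\widehat{I}_k\supsetneq\widehat{I}_m}c_{j,k}$ (precisely the depth-of-nesting phenomenon you flag at the end), so it is \emph{not} controlled by $\lambda_j^{p-2}\int_{I_m} f_j\,d\mu$ via the Carleson embedding you cite, and applying an inductive hypothesis ``at scale $\widehat{I}_m$'' is not legitimate. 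The argument can be rescued by unravelling the recursion fully into a $p$-fold sum $\sum_{k_1,\dots,k_p}\prod_i c_{j,k_i}\,\mu(\widehat{I}_{k_1}\cap\cdots\cap\widehat{I}_{k_p})$, using that any family of dyadic intervals with a common point is nested, symmetrizing so that the innermost interval is singled out, and then applying the Carleson embedding $p-1$ times in succession; this yields $\int h^p\lesssim_p\lambda_j^{p-1}\|f_j\|_{\Lone(\mu)}$ for integer $p$, and non-integer exponents are handled by interpolating between consecutive integers. This is more work than the paper's route (where the whole burden is carried by the BMO bound and one appeal to John--Nirenberg), but it does go through; the sketch as written, however, conceals the essential symmetrization step and reads as though the naive induction closes, which it does not.
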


\begin{proof} The statement of the decomposition is contained in \cites{LMP2014,CCP2022}, except for \eqref{item:3}, which is new and for which we give full details below. The intervals $I_k$ are selected using the usual stopping-time argument: let $\{I_k\}$ be the set of maximal dyadic intervals contained in $I$ so that either $\langle f_{1} \rangle_{I_k}> \lambda_1$ or $\langle f_{2} \rangle_{I_k} >\lambda_2$. Then \eqref{item:1} follows. We now turn to \eqref{item:3}, which by interpolation proves \eqref{item:2}. For $j=1,2$, we have
$$g_j= f_j \one_{\left(\cup_{k} I_k\right)^c}+\sum_{k} \langle f_j \one_{I_k} \rangle_{{\widehat{I}_k}} \one_{\widehat{I}_k} := g_{j,1}+g_{j,2}.$$
By the Lebesgue Differentiation theorem, for almost every point $x \in \left(\cup_{j} I_j\right)^c$ there is a sequence of unselected cubes shrinking down to $x$  and we have $|f_j(x)| \leq \lambda_j,$ so $\|g_{j,1}\|_{L^\infty}\leq \lambda_j.$ It easily follows that $\|g_{j,1}\|_{\BMO}\leq 2 \lambda_j$ for $j=1,2.$ It therefore suffices to show $\|g_{j,2}\|_{\BMO} \lesssim \lambda_j$ for $j=1,2.$ 
If $I=I_k$ is a selected interval, we have
\begin{align*}
\langle g_{j,2} \rangle_{\widehat{I}_k} \one_{I_k}& =  \left(\sum_{\ell: \widehat{I}_\ell \subsetneq \widehat{I}_k} \frac{\mu(\widehat{I}_\ell)}{\mu(\widehat{I}_k)} \langle f_j \one_{I_\ell} \rangle_{\widehat{I}_\ell} + \sum_{\ell: \widehat{I}_k \subset \widehat{I}_\ell} \langle f_j \one_{I_\ell} \rangle_{\widehat{I}_\ell} \right) \one_{I_k}\\
& = \frac{ \int_{E_k} f_j \, d \mu}{\mu(\widehat{I}_k)} \one_{I_k} + \left(\sum_{\ell: \widehat{I}_k \subset\widehat{I}_\ell} \langle f_j \one_{I_\ell} \rangle_{\widehat{I}_\ell}\right) \one_{I_k},
\end{align*}
where $$E_k= \bigcup_{\ell: \widehat{I}_\ell \subsetneq \widehat{I}_k} I_{\ell}.$$
On the other hand, notice that by definition and the disjointness of the selected intervals, $g_{j,2} \one_{I_k}= \left(\sum_{\ell} \langle f_j \one_{I_{\ell}} \rangle_{\widehat{I}_\ell}\one_{\widehat{I}_\ell}\right) \one_{I_k}=  \left(\sum_{\ell: \widehat{I}_k \subset\widehat{I}_\ell} \langle f_j \one_{I_\ell} \rangle_{\widehat{I}_\ell}\right) \one_{I_k} $ 
and therefore $$\frac{1}{\mu(I_k)} \int_{I_k}|g_{j,2}- \langle g_{j,2} \rangle_{\widehat{I}_k}| \, d \mu =  \frac{ \int_{E_k} f_j \, d \mu}{\mu(\widehat{I}_k)} \leq \langle f_j \rangle_{\widehat{I}_k} \leq \lambda_j.  $$
If $I$ is not a selected interval, we can similarly argue
$$\langle g_{j,2} \rangle_{\widehat{I}} \one_{I}= \frac{ \int_{E_I} f_j \, d \mu}{\mu(\widehat{I})} \one_{I} + \left(\sum_{\ell: \widehat{I}_ \subset\widehat{I}_{\ell}} \langle f_j \one_{I_{\ell}} \rangle_{\widehat{I}_\ell}\right) \one_{I},$$ where $E_I= \bigcup_{\ell: \widehat{I}_\ell \subsetneq \widehat{I}} I_{\ell}.$
However, in this case it is possible that $I= \widehat{I}_\ell$ for some $\ell,$ so we have $g_2 \one_{I}= \left(\sum_{\ell: \widehat{I}_ \subset\widehat{I}_\ell} \langle f_j \one_{I_\ell} \rangle_{\widehat{I}_\ell}\right) \one_{I}+ \left(\sum_{\ell:I= \widehat{I}_\ell} \langle f_j \one_{I_\ell} \rangle_{\widehat{I}_\ell}\right) \one_{I}  .$ However, there are at most two terms in the second summation, and each average in the sum is controlled by $\lambda_j$, so the $\BMO$ norm can be estimated as before. 
\end{proof}

 We next introduce notation about collections of intervals. Suppose functions $f_1,f_2$ and an initial interval $I_0 \in \D$ have been fixed as in Lemma \ref{CZD}. For $I \subseteq I_0$, let $\mathcal{B}(I)$ denote the collection of selected intervals in Lemma \ref{CZD} applied to $f_j \one_{I}$ at heights $\lambda_j= 16 \langle f_j \rangle_{I}$, $j=1,2$. We then inductively define, for $k \in \mathbb{N}$: 
\begin{equation} \label{eq:notationGB}
\mathcal{B}_0(I)= \mathcal{B}(I), \quad \mathcal{B}_1(I)= \bigcup_{J \in \mathcal{B}(I)} \mathcal{B}(J), \quad \mathcal{B}_k(I)=\bigcup_{J \in \mathcal{B}_{k-1}(I)} \mathcal{B}(J). 
\end{equation}
Moreover, given $N \in \mathbb{N}$, we set $$\mathcal{B}^N(I):= \bigcup_{k=0}^{N} \mathcal{B}_k(I).$$
Finally, we set
$$\mathcal{G}(I):= \{J \in \D(I): J \not \subset K \text{ for any } K \in \mathcal{B}(I)\}, \quad \mathcal{O}(I)=\{J \in \D: J \supsetneq I\}.$$

\subsection{Haar shifts} Let $\alpha=\{\alpha_{J,K}^I\}_{I, J, K \in \D}$ be a (triply indexed) sequence with $\|\alpha\|_\infty \leq 1$. For any two integers $s,t \in \mathbb{N}$, we define a \emph{Haar shift of complexity $(s,t)$} by
$$
T_\alpha^{s,t}f:= \sum_{I \in \D} \sum_{J \in \D^s(I)} \sum_{K \in \D^t(I)} \alpha_{J,K}^{I}  \langle f, h_J \rangle h_K, \quad f \in \Ltwo(\mu) 
$$
The above sum converges in $\Ltwo(\mu)$ and $\mu$-almost everywhere. The family of Haar shifts of complexity $(s,t)$ will be denoted by
$$
\HS(s,t) = \left\{T_\alpha^{s,t}: \|\alpha\|_{\infty} \leq 1 \right\}.
$$
It is immediate from the fact that $\{h_I\}_{I \in \D}$ forms an orthonormal basis for $\Ltwo(\mu)$ that for any complexity $(s,t)$ and any measure $\mu$, any $T\in \HS(s,t)$ is bounded on $\Ltwo(\mu)$ with operator norm only depending on the complexity. If $\mu$ is balanced, then $T$ is bounded on $\Lp(\mu)$ and of weak-type $(1,1)$, as shown in \cite{LMP2014}. We are now ready to state our main theorem. 

\begin{theorem}\label{th:ModifiedSparse}
Let $\mu$ be balanced and $N\in \N$. There exists $\eta \in (0,1)$ such that if $(s,t)$ satisfies $s+t \leq N$, the following holds: for all $T \in \HS(s,t)$ and each pair of compactly supported, bounded nonnegative functions $f_1,f_2$ there exists an $\eta$-sparse collection $\Ss \subset \D$ such that 
\begin{align*}
\mathlarger{|} \langle T  f_1, f_2 \rangle \mathlarger{|} & \lesssim \left( \mathcal{A}_{\Ss}(f_1, f_2)+  \mathcal{C}_{\Ss}^N(f_1,f_2) \right).
\end{align*}

\end{theorem}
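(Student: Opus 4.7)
The plan is an iterative principal-stopping-interval argument that combines the simultaneous Calder\'on--Zygmund decomposition of Lemma \ref{CZD} with a careful splitting of the Haar shift according to the relative position of its summation index with respect to the stopping family. After a standard reduction I may assume that $f_1, f_2$ are supported on a single interval $I_0 \in \D$ and that only finitely many summation terms of $T$ contribute. I apply Lemma \ref{CZD} simultaneously to $f_1 \one_{I_0}$ and $f_2 \one_{I_0}$ at heights $\lambda_j = 16\langle f_j \rangle_{I_0}$, producing decompositions $f_j = g_j + b_j$ with stopping family $\mathcal{B}(I_0) = \{I_k\}$. The choice of heights ensures $\sum_k \mu(I_k) \leq \tfrac{1}{8}\mu(I_0)$, so $E_{I_0} := I_0 \setminus \bigcup_k I_k$ has $\mu(E_{I_0}) \geq \tfrac{7}{8}\mu(I_0)$, and I place $I_0$ in the sparse family being built.

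Next, I split the operator as $T = T_{\mathrm{top}} + \sum_k T_{\mathrm{iter},k}$, where $T_{\mathrm{top}}$ keeps the indices $I \in \mathcal{G}(I_0) \cup \{I_0\}$ and $T_{\mathrm{iter},k}$ keeps those with $I \subsetneq I_k$. Each $T_{\mathrm{iter},k}$ produces a bilinear form of the same shape on $I_k$ to which the argument can be reapplied, generating the rest of $\Ss$ by recursion. For the top piece I expand
$$
\langle T_{\mathrm{top}} f_1, f_2 \rangle = \langle T_{\mathrm{top}}(g_1+b_1), g_2+b_2 \rangle
$$
into four interactions. The good-good term is bounded by Cauchy--Schwarz, the $\Ltwo(\mu)$-boundedness of $T$, and Lemma \ref{CZD}(\ref{item:2}) at $p = 2$: since $\|g_j\|_{\Ltwo(\mu)}^2 \lesssim \lambda_j \|f_j\|_{\Lone(\mu)} \lesssim \langle f_j \rangle_{I_0}^2 \mu(I_0)$, the contribution is dominated by $\langle f_1 \rangle_{I_0} \langle f_2 \rangle_{I_0} \mu(I_0)$, which is precisely the $I_0$-term of $\mathcal{A}_\Ss$.

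The bulk of the work lies in the mixed terms $\langle T_{\mathrm{top}} b_1, g_2\rangle$, $\langle T_{\mathrm{top}} g_1, b_2\rangle$, and $\langle T_{\mathrm{top}} b_1, b_2\rangle$. I expand each into Haar coefficients and exploit the following cancellation: if a frequency $J \in \D^s(I)$ properly contains some $\widehat{I}_k$, then $h_J$ is constant on $\widehat{I}_k$, so the mean-zero property in Lemma \ref{CZD}(\ref{item:1}) forces $\langle b_{1,k}, h_J \rangle = 0$. Hence only indices with $J$ and $K$ at dyadic distance at most $s + t \leq N$ from $\widehat{I}_k$ survive. For those surviving frequencies I bound each Haar coefficient of $b_{j,k}$ by $\|b_{j,k}\|_{\Lone(\mu)} \lesssim \lambda_j \mu(I_k)$, modulated by the relevant Haar normalization factor $\sqrt{m(\cdot)}$, and I replace the missing pointwise bound on $g_j$ by the $\BMO$ estimate of Lemma \ref{CZD}(\ref{item:3}), converting pairings of $g_j$ with martingale differences on $\widehat{I}_k$ and its neighbors into $\BMO$ oscillations that can be summed. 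Regrouping, the total contribution is controlled by a sum $\sum_{J, K} \langle f_1 \rangle_J \langle f_2 \rangle_K \sqrt{m(J) m(K)}$ over pairs with $\dyad(J, K) \leq N + 2$; Lemma \ref{lem:SparseReduction} absorbs the near-diagonal pairs into $\mathcal{A}_\Ss$ and leaves exactly $\mathcal{C}_\Ss^N(f_1, f_2)$. The bad-bad term is handled the same way, with two-sided cancellation making it easier. Iterating on each $I_k$ produces an $\eta$-sparse family with $\eta = 7/8$, and summing the per-level bounds yields the theorem.

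The main obstacle is precisely the mixed-term estimate just described. In the doubling setting of \cite{CDO2018} the good function $g_j$ is bounded pointwise by $\lambda_j$, which makes the analogous estimate essentially immediate once the Haar shift is localized. In the balanced, non-doubling regime no such pointwise bound is available, and the replacement by the $\BMO$ estimate of Lemma \ref{CZD}(\ref{item:3}) requires a delicate bookkeeping of which frequency indices actually survive the mean-zero cancellation of $b_{j,k}$, and of how to convert sums of $g_j$-martingale differences on $\widehat{I}_k$ and its dyadic neighbors into $\BMO$ oscillations weighted by the mass factors $\sqrt{m(I_k)}$. The emergence of the complexity-dependent form $\mathcal{C}_\Ss^N$ is then a natural consequence of the fact that the Haar shift couples scales at dyadic distance up to $s+t$.
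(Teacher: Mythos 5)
Your high-level architecture matches the paper's: an iterative stopping-time construction of $\Ss$ via Lemma~\ref{CZD} at heights $16\langle f_j\rangle_I$, a corona split of the Haar shift into a ``top" piece on $\mathcal{G}(I_0)$ plus recursive pieces below the stopping intervals, a four-way expansion into good/bad interactions, $L^2$–orthogonality plus Lemma~\ref{CZD}(\ref{item:2}) for the good--good term, the mean-zero cancellation of $b_{j,k}$ to localize the surviving frequencies, the $\BMO$ bound of Lemma~\ref{CZD}(\ref{item:3}) for good-against-bad, and Lemma~\ref{lem:SparseReduction} to pass to $\mathcal{A}_\Ss+\mathcal{C}_\Ss^N$. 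All of this is present and correctly identified in the paper's proof via Lemma~\ref{lem:IterationLemma}.

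However, the key estimate for the bad pieces is wrong, and the error is exactly the one that the non-doubling setting is designed to expose. You assert $\|b_{j,k}\|_{\Lone(\mu)}\lesssim\lambda_j\,\mu(I_k)$, but Lemma~\ref{CZD}(\ref{item:1}) only gives $\|b_{j,k}\|_{\Lone(\mu)}\lesssim\int_{I_k}|f_j|\,d\mu$, and for a non-doubling balanced $\mu$ the stopping average $\langle f_j\rangle_{I_k}$ can be arbitrarily larger than the threshold $\lambda_j\sim\langle f_j\rangle_{\widehat{I}_k}$, so $\int_{I_k}|f_j|\,d\mu$ is not $\lesssim\lambda_j\mu(I_k)$. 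If your inequality were true, the bad--bad contribution would be controlled by $\lambda_1\lambda_2\mu(I_0)\sim\langle f_1\rangle_{I_0}\langle f_2\rangle_{I_0}\mu(I_0)$ and you would prove the \emph{classical} sparse bound $\mathcal{A}_\Ss$, contradicting Proposition~\ref{prop:SparseFailure}. The averages $\langle f_1\rangle_J\langle f_2\rangle_K$ over stopping intervals that you announce in the conclusion cannot come from a $\lambda_j\mu(I_k)$ bound; they come precisely from using $\int_{I_k}|f_j|\,d\mu=\langle f_j\rangle_{I_k}\mu(I_k)$ and keeping $\langle f_j\rangle_{I_k}$ visible.

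There is also a second, structural gap in your treatment of the bad--bad term. When the surviving frequency $J_s^m$ sits at the top of $I_j$ (i.e.\ $J_s^m=\widehat{I}_j$), the Haar coefficient $|\langle b_{1,j},h_{J_s^m}\rangle|\sim\sqrt{m(I_j)}\,\langle f_1\rangle_{I_j}$ and the balanced condition equates the $m(\cdot)$-factors at the two ends. But when $J_s^m\subsetneq I_j$ (which occurs for higher complexity), $m(J_s^m)$ is not comparable to $m(I_j)$, and a blanket $L^1/\sqrt{m(J_s^m)}$ bound no longer produces anything absorbable. This is exactly why the paper's Lemma~\ref{lem:IterationLemma} iterates one more level at a time, passing from $J_{s-}^m$ to an interval $S\in\mathcal{B}^s(I)$ (rather than merely $\mathcal{B}(I)$) with $\sqrt{m(J_s^m)}\langle f_1\rangle_{J_{s-}^m}\lesssim\sqrt{m(S)}\langle f_1\rangle_S$; the resulting pairs $(S,T)\in\mathcal{B}^s(I)\times\mathcal{B}^t(I)$ with $\dyad(S,T)\le N+2$ are what populate $\mathcal{C}_\Ss^N$. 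Your proposal has no analogue of this descent into deeper generations of the stopping tree, and without it the bad--bad estimate does not close.
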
 

For the remainder of this subsection we assume that the pair $(s,t)$ has been fixed, and the two functions $f_1$ and $f_2$ are supported on a fixed dyadic interval $I_0$. We also suppose that the sequence $\alpha$ that defines the Haar shift $T$ is nonzero except for finitely many indices, since the estimates we obtain will be uniform over all finite sums, and that $\alpha$ has been fixed as well. It will be useful to introduce the following notation: let $I=[2^{-k} p, (p+1) 2^{-k}) \in \D$, where $k, p \in \mathbb{Z}.$ Given fixed $s, t \in \mathbb{N}$, let $0 \leq m \leq 2^{s}-1$ and $0 \leq n \leq 2^{t}-1$ be integers. We index the intervals $J \in \D^{s}(I)$ via
$$ I_{s}^m=[2^{-k}p+m 2^{-k-s}, 2^{-k} p+(m+1)2^{-k-s}),$$
and we define $I_{t}^n$ in an analogous manner. With this notation, the two dyadic children of a given interval $I_s^m$ are $I_{s+1}^{2m}$ and $I_{s+1}^{2m+1}.$ The following lemma contains the key iteration step in the sparse domination argument.
 
\begin{lem}\label{lem:IterationLemma}
Let $0\leq m < 2^s$, $0\leq n < 2^t$ be integers and set $N=s+t$. If $\beta = \{\beta_I\}_{I\in\D}$ is a sequence with $\|\beta\|_\infty \leq 1$ which is nonzero for finitely many indices, then 
$$
\left| \sum_{J \in \mathcal{G}(I)} \beta_J\langle f_1, h_{J_{s}^m} \rangle \langle h_{J_{t}^n}, f_2 \rangle \right| \lesssim \bigg( \langle f_1 \rangle_{I} \langle f_2 \rangle_{I} \mu(I) +  \sum_{\substack{S\in \mathcal{B}^s(I), \, T \in \mathcal{B}^t(I): \\ \dyad(S,T) \leq  N+2}} \langle f_1 \rangle_{S}  \langle f_2 \rangle_{T} \sqrt{m(S)} \sqrt{m(T)} \bigg).
$$ 
 
\end{lem}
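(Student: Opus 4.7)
The plan is to prove Lemma~\ref{lem:IterationLemma} by applying the Calder\'on--Zygmund decomposition of Lemma~\ref{CZD} simultaneously to $f_1$ and $f_2$ on $I$ at the heights $\lambda_j = 16\langle f_j\rangle_I$, writing $f_j = g_j + b_j$ with $b_j = \sum_{I_k \in \mathcal{B}(I)} b_{j,k}$ supported on $\widehat{I_k}$, and with $g_j$ satisfying both $\|g_j\|_{L^2(\mu)} \lesssim \langle f_j\rangle_I\,\mu(I)^{1/2}$ and (crucially) $\|g_j\|_{\BMO} \lesssim \langle f_j\rangle_I$. Expanding the bilinear form along these decompositions yields four pieces (gg), (gb), (bg), (bb), which I would treat separately.

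For (gg), I would observe that the map $f \mapsto \sum_{J \in \mathcal{G}(I)} \beta_J \langle f, h_{J_{s}^m}\rangle h_{J_{t}^n}$ is itself a Haar shift of complexity $(s,t)$, hence bounded on $L^2(\mu)$ with operator norm depending only on $N$. Cauchy--Schwarz combined with the $L^2$-estimate on $g_j$ then yields $|\text{(gg)}| \lesssim \|g_1\|_2\|g_2\|_2 \lesssim \langle f_1\rangle_I \langle f_2\rangle_I \mu(I)$, matching the first term on the right-hand side.

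For the pieces containing $b_j$, the structural input is that by direct computation $\langle b_{j,k}, h_L\rangle$ vanishes unless either $L = \widehat{I_k}$ (in which case it equals $\sqrt{m(\widehat{I_k})}\langle f_j\rangle_{I_k}$) or $L \subseteq I_k$ (in which case it equals $\langle f_j, h_L\rangle$). Combined with the fact that $J \not\subset I_k$ for $J \in \mathcal{G}(I)$, the nonvanishing of $\langle b_{j,k}, h_{J_{s}^m}\rangle$ forces $J$ to be an ancestor of $I_k$ of depth at most $s+1$, and symmetrically at most $t+1$ on the $f_2$-side. For cross terms such as (gb) I would apply the BMO consequence $|\langle g_1, h_L\rangle| \lesssim \sqrt{m(L)}\,\langle f_1\rangle_I$ together with the explicit bad-coefficient formula, using the balanced comparison $m(J_{s}^m) \sim m(\widehat{I_k})$ (valid since $J_{s}^m$ sits within boundedly many scales of $\widehat{I_k}$) and the telescoping estimate $\sum_k m(\widehat{I_k})\langle f_j\rangle_{I_k} \leq \sum_k \mu(I_k)\langle f_j\rangle_{I_k} \leq \langle f_j\rangle_I\,\mu(I)$ to absorb (gb) and (bg) into the first right-hand side term as well.

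The (bb) piece is where the iteration is unavoidable: expanding $\langle f_j, h_{J_{s}^m}\rangle = \sqrt{m(J_{s}^m)}\bigl(\langle f_j\rangle_{(J_{s}^m)_-} - \langle f_j\rangle_{(J_{s}^m)_+}\bigr)$ exposes averages that may themselves be large, forcing a reapplication of Lemma~\ref{CZD} inside each $I_k$. Iterating up to $s$ times for $f_1$ and $t$ times for $f_2$ produces exactly the families $\mathcal{B}^s(I)$ and $\mathcal{B}^t(I)$ appearing on the right-hand side. The distance constraint $\dyad(S,T) \leq N+2 = s+t+2$ arises because any contributing $J \in \mathcal{G}(I)$ is simultaneously an ancestor of $S$ at depth $\leq s+1$ and of $T$ at depth $\leq t+1$, so $S$ and $T$ share the common ancestor $J$ within depth $s+t+2$. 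The main obstacle will be organizing the iteration so that every term either absorbs into the $L^2$-type main bound or appears in the modified sparse sum with the correct $(S,T)$ pair, while keeping constants uniform in $J$, $m$, $n$; the balanced relation $m(I)\sim m(\widehat I)$ and the BMO estimate of Lemma~\ref{CZD}\eqref{item:3} are precisely what allow the bookkeeping to close.
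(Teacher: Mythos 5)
Your proposal is correct and follows essentially the same approach as the paper: apply the Calder\'on--Zygmund decomposition of Lemma~\ref{CZD} to both $f_1$ and $f_2$ at heights $16\langle f_j\rangle_I$, split into the four pieces, handle the good-good piece by orthonormality/Cauchy--Schwarz and the $L^2$ control, handle the cross pieces via the $\BMO$ bound on the good part plus the $L^1$ bound and localization on the bad coefficients (eliminating terms where $h_{J_s^m}$ or $h_{J_t^n}$ sees a constant), and treat the bad-bad piece by reapplying the stopping time up to $s$ (resp.\ $t$) times to land in $\mathcal{B}^s(I)$ (resp.\ $\mathcal{B}^t(I)$), with the common ancestor $J$ giving $\dyad(S,T)\leq N+2$. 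The only presentational difference is that you phrase the good-good estimate via boundedness of a truncated Haar shift rather than expanding directly in the Haar basis, which is equivalent; and, as you flag yourself, your sketch of the bad-bad iteration leaves implicit the three-way case analysis (hit a selected interval, average already controlled, or recurse) that the paper spells out, which is where the bookkeeping and the balanced condition enter.
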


\begin{proof}
  Fix $I \subseteq I_0.$ We begin with the usual splitting with $f_j \one_{I}=g_j+b_j$ with respect to the Calder\'{o}n-Zygmund decomposition applied on $I$:

\begin{align*}
\left| \sum_{J \in \mathcal{G}(I)} \beta_J\langle f_1, h_{J_{s}^m} \rangle \langle h_{J_{t}^n}, f_2 \rangle \right| & \leq \left| \sum_{J \in \mathcal{G}(I)} \beta_J\langle g_1, h_{J_{s}^m} \rangle \langle h_{J_{t}^n}, g_2 \rangle \right|+ \left| \sum_{J \in \mathcal{G}(I)} \beta_J\langle g_1, h_{J_{s}^m} \rangle \langle h_{J_{t}^n}, b_2 \rangle \right|\\
& + \left| \sum_{J \in \mathcal{G}(I)} \beta_J\langle b_1, h_{J_{s}^m} \rangle \langle h_{J_{t}^n}, g_2 \rangle \right|+ \left| \sum_{J \in \mathcal{G}(I)} \beta_J\langle b_1, h_{J_{s}^m} \rangle \langle h_{J_{t}^n}, b_2 \rangle \right|  \\
& := \mathrm{(I)} + \mathrm{(II)} + \mathrm{(III)} +\mathrm{(IV)}.
\end{align*}

We handle the four terms in order. We estimate, using the fact that the $h_J$ are orthonormal, the $\Ltwo$ control on the good functions, and the fact that $\|\beta\|_{\ell^\infty} \leq 1$:

\begin{align*}
(I) & \leq \sum_{J \in \mathcal{G}(I)}| \langle g_1, h_{J_m^s}\rangle| \cdot | \langle h_{J_n^t}, g_2 \rangle|\\
& \leq \left( \sum_{K \in \D}| \langle g_1, h_{K}\rangle|^2\right)^{1/2} \left( \sum_{L \in \D}| \langle h_L, g_2 \rangle|^2\right)^{1/2} \\
& \leq \|g_1\|_{\Ltwo(\mu)} \|g_2\|_{\Ltwo(\mu)}\\
& \lesssim \left( \langle f_1 \rangle_{I}\right)^{1/2} \left(\int_{I} f_1 \, d\mu\right)^{1/2} \left( \langle f_2 \rangle_{I}\right)^{1/2} \left(\int_{I} f_2 \, d\mu\right)^{1/2} = \langle f_1 \rangle_{I} \langle f_2 \rangle_{I} \mu(I).
\end{align*}
This proves the desired estimate for (I). For (II), we have $$ \mathrm{(II)}  \leq \sum_{J \in \mathcal{G}(I)} \sum_{k} |\langle g_1, h_{J_s^m} \rangle| | \langle h_{J_t^n},b_{2,k} \rangle|.$$ 
However, many of the terms in this summation are zero due to the cancellation/support of the $b_{2,k}$ and the constraint $J \in \mathcal{G}(I).$ Indeed, the cancellation on the $b_{2,k}$ implies $J_t^n \subsetneq I_k^{(2)}$, while the condition $J \in \mathcal{G}(I)$ together with the support of $b_{2,k}$ implies that either $I_k \subsetneq J$ or $J\subset I_k^b.$ But in the latter case, $b_{2,k}$ is a constant function on the support of $h_{J_t^n}$ (which is contained in $I_k^b$), so these terms may be eliminated as well by the cancellation of $h_{J_t^n}.$ Note that given any fixed $I_k$, there are finitely many intervals $J$ that can satisfy these constraints, and the number of intervals only depends on $t$, not $k.$ Indeed, we may write $I_k \subsetneq J \subsetneq I_k^{(2+t)}.$  Therefore, after eliminating these terms, the double summation is equal to: 
$$\sum_{k} \sum_{\substack{J \in \mathcal{G}(I):\\ I_k \subsetneq J \subsetneq I_k^{(2+t)}}}  |\langle g_1, h_{J_s^m} \rangle| | \langle h_{J_t^n},b_{2,k} \rangle|.$$ 
Fix $J \in \mathcal{G}(I)$ satisfying the additional two conditions in the summation. We will obtain a uniform estimate on the inner summation. On the one hand, we estimate 
\begin{align*}
| \langle g_1, h_{J_s^m} \rangle| & = \sqrt{m(J_s^m)} \left| \langle g_1 \rangle_{J_{s+1}^{2m}}- \langle g_1 \rangle_{J_{s+1}^{2m+1}} \right|\\
& \leq  \sqrt{m(J_s^m)} \left( \left| \langle g_{1} \rangle_{J_{s+1}^{2m}}- \langle g_{1}  \rangle_{J_s^m}\right|+ \left|\langle g_{1}  \rangle_{J_s^m}  - \langle g_1 \rangle_{J_{s+1}^{2m+1}}      \right| \right)  \\
& \leq 2 \sqrt{m(J_s^m)} \|g_1\|_{\BMO} \lesssim \sqrt{m(J_s^m)} \langle f_1 \rangle_{I},
\end{align*}
by Lemma \ref{CZD}, \eqref{item:3}. On the other hand, using the $L^\infty$ estimate of the Haar functions, the $\Lone$ control on the bad function, and the fact that $J_s^m$ and $J_t^n$ share the common ancestor $J$, we estimate:
\begin{align*} 
| \langle h_{J_t^n}, b_{2,k} \rangle| & \leq \frac{1}{\sqrt{m(J_t^n)}} \|b_{2,k}\|_{\Lone(\mu)}\\
& \lesssim \frac{1}{\sqrt{m(J_t^n)}}  \int_{I_k}|f_2| \, d \mu. \\
& \lesssim  \frac{1}{\sqrt{m(J_s^m)}}  \int_{I_k}|f_2| \, d \mu,
\end{align*}
where in the last line we used our assumption that $\mu$ is balanced applied at most $N$ times. Thus, in particular, for fixed $k$, we have a bound $$|\langle g_1, h_{J_s^m} \rangle| | \langle h_{J_t^n},b_{2,k} \rangle| \lesssim \langle f_1 \rangle_{I} \int_{I_k}|f_2| \, d \mu,$$ that is independent of $J.$ Putting these two estimates together and summing, we have 
\begin{align*}
\mathrm{(II)} & \leq  \sum_{k} \sum_{\substack{J \in \mathcal{G}(I):\\ I_k \subsetneq J \subsetneq I_k^{(2+t)}}}  |\langle g_1, h_{J_s^m} \rangle| | \langle h_{J_t^n},b_{2,k} \rangle|\\
& \lesssim  \sum_{k} \sum_{\substack{J \in \mathcal{G}(I):\\ I_k \subsetneq J \subsetneq I_k^{(2+t)}}} \langle f_1 \rangle_{I} \int_{I_k}|f_2| \, d \mu\\
& \lesssim \sum_k \langle f_1 \rangle_{I} \int_{I_k}|f_2| \, d \mu \lesssim \langle f_1 \rangle_{I} \langle f_2 \rangle_{I} \mu(I),
\end{align*}
where in the penultimate estimate the implicit constant depends only on $t$ and  in the last step we used the fact that the $I_k \in \mathcal{B}(I)$ are disjoint and contained in $I$ for all $k$. Term (III) is handled similarly and we omit the details, so we are left with (IV). We estimate
$$\mathrm{(IV)} \leq  \sum_{J \in \mathcal{G}(I)} |\langle b_1, h_{J_{s}^m} \rangle| \cdot |\langle h_{J_{t}^n}, b_2 \rangle| .$$
So, we need to handle terms of the form $|\langle b_{1,j}, h_{J_s^m} \rangle| \cdot |\langle h_{J_t^n}, b_{2,k}\rangle|$ where $b_{1,j}$ is supported on $I_j$, $b_{2,k}$ is supported on $I_k$, and both $I_j$ and $I_k$ are selected intervals. Again, there are some constraints on the interval $J$. In particular, we must have $I_j \subsetneq J \subsetneq I_j^{(2+s)}$ and $I_k \subsetneq J \subsetneq I_k^{(2+t)}$, as we determined previously. So for fixed $j,k$, there are finitely many intervals $J$ that contribute to the sum, where the number of terms only depends on $s,t$. Therefore, similar to before, it suffices to consider the sum 
$$  \sum_{j} \sum_{k} \sum_{\substack{J \in \mathcal{G}(I):\\ I_j \subsetneq J \subsetneq I_j^{(2+s)} \text{ and } I_k \subsetneq J \subsetneq I_k^{(2+t)}} } |\langle b_{1,j}, h_{J_{s}^m} \rangle| \cdot |\langle h_{J_{t}^n}, b_{2,k} \rangle|     $$
and show it is controlled by the right hand side in the statement. Let us fix $J, j,k$ as in the summation for the time being, and consider a term $|\langle b_{1,j}, h_{J_{s}^m} \rangle| \cdot |\langle h_{J_{t}^n}, b_{2,k} \rangle|.$ Note that either $J_s^m= \widehat{I}_j$ or $J_s^m \subset I_j.$ In the former case, we can control the inner product as follows:
\begin{align*}
|\langle b_{1,j}, h_{J_{s}^m} \rangle|  & = \sqrt{m(\widehat{I}_j)} \left| \langle b_{1,j} \rangle_{I_j}- \langle b_{1,j} \rangle_{I_j^b}\right|\\
& = \sqrt{m(\widehat{I}_j)}\left| \langle f_1 \rangle_{I_j}- \langle f_{1} \one_{I_j} \rangle_{\widehat{I}_j} + \langle f_{1} \one_{I_j} \rangle_{\widehat{I}_j}- \langle f_1 \one_{I_j} \rangle_{I_j^b}\right|\\
& \sim \sqrt{m(I_j)} \langle f_1 \rangle_{I_j}.
\end{align*}
If $J_t^n=\widehat{I}_k$ as well, then we obviously have the analogous estimate and therefore:
\begin{equation} |\langle b_{1,j}, h_{J_{s}^m} \rangle| \cdot |\langle h_{J_{t}^n}, b_{2,k} \rangle| \lesssim \langle f_1 \rangle_{I_j} \langle f_2 \rangle_{I_k} \sqrt{m(I_j)} \sqrt{m(I_k)} \label{BB1} \end{equation}
Moreover, notice that $\dyad(I_j,I_k) \leq N+2$ and by definition, $I_j, I_k \in \mathcal{B}(I)$ and are disjoint unless $I_j=I_k$. In the latter case when $J_s^m \subset I_j$, we see that
\begin{align}
|\langle b_{1,j}, h_{J_{s}^m} \rangle|  & = \sqrt{m(J_s^m)} \left| \langle b_{1,j} \rangle_{J_{s-}^{m}}- \langle b_{1,j} \rangle_{J_{s+}^{m}}\right| \nonumber\\
& =\sqrt{m(J_s^m)} \left| \langle f_1 \rangle_{J_{s-}^{m}}- \langle f_{1} \one_{I_j} \rangle_{\widehat{I}_j} + \langle f_{1} \one_{I_j} \rangle_{\widehat{I}_j}- \langle f_1 \rangle_{J_{s+}^{m}}\right| \nonumber\\
& \leq \sqrt{m(J_s^m)} \langle f_1 \rangle_{J_{s-}^{m}}+ \sqrt{m(J_{s}^m)} \langle f_1 \rangle_{J_{s+}^{m}} \label{BB2} .
\end{align}
It suffices to control $\sqrt{m(J_{s}^m)} \langle f_1 \rangle_{J_{s-}^{m}}$; the other term is obviously handled in the same way. If $J_{s-}^{m} \in \mathcal{B}(I_j)$, we leave the estimate as is. So suppose that $J_{s-}^{m} \notin \mathcal{B}(I_j).$ Then either $\langle f_1 \rangle_{J_{s-}^{m}} \leq 16 \langle f_1 \rangle_{I_j}$ or there exists $K \subsetneq I_j$ with $K \in \mathcal{B}(I_j)$ and $J_s^m \subsetneq K.$ In the former case, we have the estimate:
\begin{equation} \sqrt{m(J_s^m)} \langle f_1 \rangle_{J_{s-}^{m}} \lesssim  \sqrt{m(I_j)} \langle f_1 \rangle_{I_j} , \label{BB3}\end{equation}
using the balanced property of $\mu.$ In the latter case, we apply the same procedure, letting $K$ assume the role of $I_j.$ That is, either $J_{s-}^{m} \in \mathcal{B}(K),$ in which case we stop and leave the original estimate as is, or we have the control $\langle f_1 \rangle_{J_{s-}^{m}} \leq 16 \langle f_1 \rangle_{K}$, or there exists an interval $K' \in \mathcal{B}(K)$ so that $J_s^{m-} \subsetneq K',$ and we apply the procedure again to $K'.$ Notice that $\dyad(J_{s-}^{m},I_j)\leq s.$ Iterating this procedure at most $s$ times and using the balanced condition, we can find an interval $S \in \mathcal{B}^s(I)$ (more precisely, $\mathcal{B}^{s-1}(I_j)$) containing $J_{s-}^{m}$ and satisfying
\begin{equation} \sqrt{m(J_s^m)} \langle f_1 \rangle_{J_{s-}^{m}} \lesssim  \sqrt{m(S)} \langle f_1 \rangle_{S} , \label{BB4}\end{equation}
where the implicit constant depends only on $\mu$ and s. Moreover, the same reasoning applies to $J_t^n$, giving us an estimate
\begin{equation} \sqrt{m(J_{t}^n)} \langle f_1 \rangle_{J_{t-}^{n}} \lesssim  \sqrt{m(T)} \langle f_1 \rangle_{T} \label{BB5} .\end{equation}
for some $T \in \mathcal{B}^{t-1}(I_k)$ (or $T=I_k$) with $J_{t-}^{n} \subset T.$ Let $S,T$ be as above. We observe that $\dyad(S,T) \leq N+2$, since both have common ancestor $J.$ Examining estimates \eqref{BB1} through \eqref{BB5} and using the fact that the number of intervals $J$ satisfying these constraints only depends on $s,t$, we deduce that for fixed $j,k$, we have the bound:
\begin{align*}
& \sum_{\substack{J \in \mathcal{G}(I):\\ I_j \subsetneq J \subsetneq I_j^{(2+s)} \text{ and } I_k \subsetneq J \subsetneq I_k^{(2+t)}} } |\langle b_{1,j}, h_{J_{s}^m} \rangle| \cdot |\langle h_{J_{t}^n}, b_{2,k} \rangle| \\
& \lesssim \bigg(\langle f_1 \rangle_{I_j} \langle f_2 \rangle_{I_k} \sqrt{m(I_j)} \sqrt{m(I_k)}  +\sum_{\substack{S \in \mathcal{B}^s(I_j), \, T \in \mathcal{B}^t(I_k):\\ \dyad(S,T) \leq N+2}} \langle f_1 \rangle_{S}  \langle f_2 \rangle_{T} \sqrt{m(S)} \sqrt{m(T)}  \bigg),
\end{align*} where the implicit constant depends on $s,t$. Summing over $j,k$ (which correspond to disjoint intervals $I_j, I_k$) then completes the proof.
\end{proof}

We can now turn to the proof of Theorem \ref{th:ModifiedSparse}.

\begin{proof}[Proof of Theorem \ref{th:ModifiedSparse}] We first describe the construction of the sparse collection $\Ss$ and then show how Lemma \ref{lem:IterationLemma} leads to the desired domination. With the notation introduced in \eqref{eq:notationGB}, we assume that $f_1,f_2$ are supported on an interval $\tilde{I}_0\in \D$, we set $I_0=\tilde{I}_0^{\max\{s,t\}+1}$ and
$$
\Ss:= I_0 \cup \bigcup_{k=0}^\infty \mathcal{B}_k(I_0).
$$
$\Ss$ is independent of $s$ and $t$ and only depends on $f_1$ and $f_2.$ We need to show that it is sparse, and we do so showing that it satisfies \eqref{eq:CarlesonPacking}. By an inductive argument, it suffices to show that for a given $I \in \mathcal{B}_j(I_0),$ there holds $$\sum_{\substack{J \in \mathcal{B}_{j+1}(I_0):\\J \subset I}} \mu(J)  \leq \frac{\mu(I)}{2}.$$ For each such interval $J$, select $s(J) \in \{1,2\}$ such that $\langle f_{s(J)} \rangle_{J} \geq 16 \langle f_{s(J)} \rangle_{I}.$ We estimate, using the fact that the intervals in the sum are pairwise disjoint:
 \begin{align*}
\sum_{\substack{J \in \mathcal{B}_{j+1}(I_0):\\J \subset I}} \mu(J) & \leq \sum_{\substack{J \in \mathcal{B}_{j+1}(I_0):\\J \subset I}} \frac{\int_{J} f_{s(J)} \, d\mu}{16 \langle f_{s(J)} \rangle_{I}} \\
& \leq \frac{\mu(I)}{16} \sum_{\substack{J \in \mathcal{B}_{j+1}(I_0):\\J \subset I}}  \left( \frac{\int_{J} f_{1} \, d\mu}{\int_{I} f_{1} \, d\mu}+ \frac{\int_{J} f_{2} \, d\mu}{\int_{I} f_{2} \, d \mu} \right)\\
& \leq \frac{\mu(I)}{8}.
 \end{align*}
The claim then follows. We now turn to actually proving the domination. Since $f_1$ and $f_2$ are supported on $I_0$, we may assume that $\alpha_{J,K}^{I}=0$ if $I\cap I_0=\emptyset$. Therefore, we may decompose
\begin{align*}
    \langle Tf_1,f_2\rangle = \sum_{I \in \mathcal{O}(I_0)} \sum_{J \in \D_s(I)}\sum_{K \in \D_t(I)}\alpha_{J,K}^{I} \langle f_1, h_J \rangle \langle f_2, h_K \rangle \\
    + \sum_{m=0}^{2^s-1} \sum_{n=0}^{2^t-1} \sum_{I \in \D(I_0)} \alpha_{I_m^s,I_n^t}^{I} \langle f_1, h_{I_m^s} \rangle \langle f_2, h_{I_n^t} \rangle \\
    =: \langle T_0 f_1, f_2 \rangle + \sum_{m=0}^{2^s-1} \sum_{n=0}^{2^t-1} \langle T_{m,n} f_1, f_2 \rangle.
\end{align*}
Since we have the corona-type decomposition
$$
\D(I_0)= \bigcup_{J \in \Ss} \mathcal{G}(J),  
$$
we may apply Lemma \ref{lem:IterationLemma} to each $I\in \Ss$ and sum to get
$$
|\langle T_{m,n} f_1, f_2 \rangle| \lesssim \mathcal{A}_{\Ss}(f_1,f_2) +  \sum_{\substack{J,K \in \Ss: \\ 1 \leq \dyad(J,K) \leq  N+2}} \langle f_1 \rangle_{J}  \langle f_2 \rangle_{K} \sqrt{m(J)} \sqrt{m(K)}.
$$
after summing over $I\in\Ss$. Lemma \ref{lem:SparseReduction} then completes the estimate for the operators $T_{m,n}$ after passing to a second sparse collection $\Ss'$. Finally, because of our choice of $I_0$, if $I_0 \subsetneq I$ then $h_J$ is constant on any interval contained in $\tilde{I}_0$ if $J\in \D_s(I) \cup \D_t(I)$. Therefore,
\begin{align*}
   | \langle T_0 f_1, f_2 \rangle | & \leq \sum_{I \in \mathcal{O}(I_0)} \sum_{J \in \D_s(I)}\sum_{K \in \D_t(I)} |\langle f_1, h_J \rangle| |\langle f_2, h_K \rangle| \\
   & = \sum_{I \in \mathcal{O}(I_0)} \sum_{J \in \D_s(I)}\sum_{K \in \D_t(I)} |\langle \langle f_1\rangle_{\tilde{I}_0} \one_{\tilde{I}_0}, h_J \rangle| |\langle \langle f_2\rangle_{\tilde{I}_0} \one_{\tilde{I}_0}, h_K \rangle| \\
   & \lesssim_{s,t} \left( \sum_{J\in \D} |\langle \langle f_1\rangle_{\tilde{I}_0} \one_{\tilde{I}_0}, h_J \rangle|^2\right)^{\frac12} \left( \sum_{K\in \D} |\langle \langle f_2\rangle_{\tilde{I}_0} \one_{\tilde{I}_0}, h_K \rangle|^2\right)^{\frac12} \\
   & = \left\| \langle f_1\rangle_{\tilde{I}_0} \one_{\tilde{I}_0}\right\|_2 \left\| \langle f_2\rangle_{\tilde{I}_0} \one_{\tilde{I}_0}\right\|_2 = \langle f_1\rangle_{\tilde{I}_0} \langle f_2\rangle_{\tilde{I}_0} \mu(\tilde{I}_0), \\
\end{align*}
and the result follows.
\end{proof}

\begin{rem}\label{EliminateBrothers}
Notice that $\mathcal{C}_{\Ss}^0(f_1,f_2)\equiv 0$ for any pair $f_1,f_2$. Therefore, Theorem \ref{th:ModifiedSparse} with $(s,t)=(0,0)$ recovers the known result for Haar multipliers, which admit the usual sparse domination by $\mathcal{A}_{\Ss}(f_1,f_2)$ (see \cite{Lacey2017}). 
\end{rem}

\begin{rem}
Theorem \ref{th:thmA} is an immediate consequence of Theorem \ref{th:ModifiedSparse} when we take $N=1$. Note that this provides a sparse domination for the dyadic Hilbert transform $\Sha$ and -the same one- for its adjoint $\Sha^*$. In addition, by following the proof of Theorem \ref{th:ModifiedSparse}, one can obtain a refined sparse domination for $\Sha$ alone of the form
$$    | \langle \Sha f_1, f_2 \rangle|  \lesssim      C \left(\sum_{I \in \Ss} \langle f_1 \rangle_I \langle f_2 \rangle_{I} \mu(I)  + \sum_{\substack{I \in \Ss:\\ I^b_{-} \in \Ss}} \langle f_1 \rangle_{I} \langle f_2 \rangle_{I^b_{-}} m(I) + \sum_{\substack{I \in \Ss:\\ I^b_{+} \in \Ss}} \langle f_1 \rangle_{I} \langle f_2 \rangle_{I^b_{+}} m(I) \right)        $$
We leave the details to the interested reader. 
\end{rem}


\subsection{A maximal function} In what follows, given a pair of dyadic intervals $I,J$ and $p \in [1,\infty)$, let 
\begin{equation}   \Cpb(I,J)=  \begin{cases}            
1 & I=J  \\
\left( \frac{m(I)^{p/2} m(J)^{p/2}}{\mu(J) \mu(I)^{p-1}}\right) & \text{otherwise}.
\end{cases}  \label{ModApCorFactor} \end{equation}
The following modified maximal function that depends on $N\in\N$ is intimately related to sparse domination for $T\in \HS(s,t)$ for $s+t=N$.
\begin{definition}\label{MaximalCousins}
Given $N \in \mathbb{N}$, define the following maximal dyadic operator for $f \in \Lone(\mu)$: 

$$ \Maximal ^Nf(x):= \sup_{\substack{I, J \in \D:\\   \dyad(I,J)\leq N+2}}  \Coneb(I,J) \langle |f| \rangle_{I} \one_{J}(x) $$
\end{definition}
Under the assumption that $\mu$ is balanced, the operator $\Maximal ^N$ enjoys similar boundedness properties as the classical Hardy-Littlewood maximal function. 

\begin{lem}\label{ModMaxBounds} $\Maximal^N$ is bounded on $\Lp(\mu)$ for $1<p\leq \infty$ and is of weak-type $(1,1).$
\end{lem}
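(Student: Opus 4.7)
The plan is to use the balanced condition to derive the $L^\infty$ bound for free, obtain the weak-type $(1,1)$ estimate via a stopping-time argument built around a key integral identity, and then apply Marcinkiewicz interpolation.

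For the $L^\infty$ bound, iterating the balanced condition at most $N+2$ times gives $m(I)\sim_N m(J)$ whenever $\dyad(I,J)\leq N+2$, and the elementary inequality $m(J)=\mu(J_-)\mu(J_+)/\mu(J)\leq \mu(J)/4$ (AM-GM) shows
$$
\Coneb(I,J) = \frac{\sqrt{m(I)m(J)}}{\mu(J)} \;\lesssim_N\; \frac{m(J)}{\mu(J)} \;\leq\; \tfrac{1}{4},
$$
so $\|\Maximal^N f\|_\infty \lesssim_N \|f\|_\infty$.

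For the weak-type $(1,1)$ bound, the analytical engine is the dual integral inequality (proved the same way):
$$
\Coneb(I,J)\langle|f|\rangle_I \mu(J) \;=\; \sqrt{m(I)m(J)}\langle|f|\rangle_I \;\lesssim_N\; m(I)\langle|f|\rangle_I \;\leq\; \tfrac{1}{4}\int_I|f|\,d\mu.
$$
Given $\lambda>0$, I would decompose $\Maximal^N$ into a finite sum (of cardinality depending only on $N$) of \emph{configuration operators}
$$
\Maximal^{s,t,m,n}f(x) \;=\; \sup_K \Coneb(K_s^m, K_t^n)\langle|f|\rangle_{K_s^m}\one_{K_t^n}(x),
$$
one for each quadruple with $s+t\leq N+2$, $0\leq m<2^s$, $0\leq n<2^t$. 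This reduces the problem to a weak-type $(1,1)$ bound for each $\Maximal^{s,t,m,n}$, with constant depending on $N$. For a fixed configuration, let $\mathcal{K}_\lambda=\{K:\Coneb(K_s^m,K_t^n)\langle|f|\rangle_{K_s^m}>\lambda\}$. Since for each $x$ the $K$'s with $x\in K_t^n$ form a totally ordered chain, selecting maximal elements of $\{K_t^n:K\in\mathcal{K}_\lambda\}$ produces a pairwise disjoint family $\{J_k\}$ covering the level set, and for each $J_k$ the associated $K_k=J_k^{(t)}$ and $I_k=K_k{}_s^m$ satisfy
$$
\lambda\mu(J_k) \;<\; \Coneb(I_k,J_k)\langle|f|\rangle_{I_k}\mu(J_k) \;\lesssim_N\; \int_{I_k}|f|\,d\mu.
$$

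The main obstacle is that although the $J_k$'s are pairwise disjoint, the witnesses $I_k$ need not be: two $I_k$'s at the same dyadic scale are always disjoint (same-scale dyadic intervals are equal or disjoint), but nothing prevents $I_k\subsetneq I_{k'}$ at different scales even when $J_k\cap J_{k'}=\emptyset$. To obtain $\sum_k\int_{I_k}|f|\,d\mu\lesssim_N \|f\|_1$ I would organize the contribution by scale: within each scale the $K_k$'s are pairwise disjoint, so the $I_k$'s at that scale contribute at most $\|f\|_1$, and a Carleson packing-type argument controls the cross-scale interactions using both the $L^\infty$ bound on $\Coneb$ (i.e.\ the $m(J)/\mu(J)\leq 1/4$ decay) and the comparability of $m$-values across depths $\leq N+2$ from the balanced condition. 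Because $\mu$ is only balanced and not doubling, the usual doubling-based tricks are unavailable and the balanced constants have to be threaded carefully through the scale iteration; this is the hard part. Once the weak-type $(1,1)$ estimate is in hand, Marcinkiewicz interpolation between $L^\infty$ and $L^{1,\infty}$ yields $L^p(\mu)$-boundedness for $1<p<\infty$.
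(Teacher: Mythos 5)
Your $L^\infty$ bound is correct, and your starting framework for the weak-type bound (maximal-interval selection, the identity $C_1^b(I,J)\langle|f|\rangle_I\,\mu(J)\lesssim_N\int_I|f|\,d\mu$, and the decomposition into finitely many ``configuration operators'') is plausible and, up to that point, parallel in spirit to the paper's proof. However, you identify the crux of the matter --- the witnesses $I_k$ can nest even after the $J_k$'s have been chosen pairwise disjoint --- and then do not actually resolve it. The phrase ``a Carleson packing-type argument controls the cross-scale interactions\ldots this is the hard part'' is precisely where the proof is missing; no such argument is given, and the fact that $\mu$ is balanced but not doubling means the claim is not a routine invocation of known packing estimates. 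Organizing by scale controls \emph{same-scale} witnesses trivially (equal-length dyadic intervals are disjoint or equal), but it does nothing to prevent a single large $I_{k'}$ from containing infinitely many small $I_k$'s, each contributing $\int_{I_k}|f|\,d\mu$ independently, so the naive sum over scales is not bounded by $\|f\|_1$.

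The paper closes this gap with a combinatorial dichotomy that your outline does not contain. After passing to the maximal, pairwise-disjoint subfamily $\{I_{j_k}\}$ of the witnesses, one sorts the indices $j$ with $I_j\subset I_{j_k}$ according to whether $J_j$ meets $I_{j_k}$. If $J_j\cap I_{j_k}=\varnothing$, then since $I_j\subset I_{j_k}$ and $\dyad(I_j,J_j)\le N+2$, any common dyadic ancestor of $I_j$ and $J_j$ already contains $I_{j_k}$, hence $\dyad(I_{j_k},J_j)\le N+2$ as well; therefore at most $C_N$ indices $j$ fall in this case, and each $\mu(J_j)$ is bounded by $\lambda^{-1}\int_{I_{j_k}}f\,d\mu$ via your identity. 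In the complementary case the $J_j$'s (being pairwise disjoint) either consist of a single interval containing $I_{j_k}$, or all sit inside $I_{j_k}$, and then their total measure is at most $\mu(I_{j_k})$, which is again controlled by $\lambda^{-1}\int_{I_{j_k}}f\,d\mu$ because the balance condition gives $m(I_{j_k})\sim m(J_{j_k})$. Summing over the disjoint family $\{I_{j_k}\}$ then gives the weak-type bound. This is exactly the ``threading of balanced constants through the scale iteration'' you flag as hard; without it, your argument does not close.
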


\begin{proof}
$\Maximal ^N$ is bounded on $L^\infty(\mu)$ with constant depending on the parameter $N$ and the balancing constants of $\mu$, so it suffices to show $\Maximal ^N$ is weak-type $(1,1)$ and then interpolate. Take $0\leq f \in \Lone(\mu)$, and for $\lambda>0$ let $$E_\lambda=\{x: \Maximal^N_{\D}f(x)>\lambda\}.$$The following countable collection of intervals then clearly covers $E_\lambda$:
$$\mathscr{C}_{\lambda}=\left\{J \in \D: \exists \, I \in \D \text{ so } \dyad(I,J)\leq N+2 \text{ and } \Coneb(I,J) \langle f\rangle_{I} > \lambda \right\}.$$
By selecting the maximal dyadic intervals in $\mathscr{C}_\lambda$, we obtain a countable, pairwise disjoint collection of intervals $\{J_j\}_{j=1}^{\infty}$ that covers $E_\lambda$, together with an associated collection of dyadic intervals $\{I_j\}_{j=1}^{\infty}$ satisfying 
\begin{equation} \Coneb(I_j,J_j) \langle f \rangle_{I_j}> \lambda \label{LargeAverage} \end{equation} 
for all $j.$ The intervals $\{I_j\}_{j=1}^{\infty}$ need not be pairwise disjoint. However, we can select a subsequence of maximal ones that are pairwise disjoint, which we denote by $\{I_{j_k}\}_{k=1}^{\infty}$. For fixed $k \in \mathbb{N}$, define the following index collections:
$$
A_k=\{j \in \mathbb{N}: I_j \subset I_{j_k}\}, \quad B_k=\{j \in A_k: J_j \cap I_{j_k} \neq \emptyset\}. 
$$
$\{A_k\}_{k=1}^{\infty}$ partitions $\mathbb{N}.$ First, we claim that
\begin{equation}| A_k \setminus B_k|\leq C_N. \label{CardBound} \end{equation}   To see this, notice that if $j \in A_k \setminus B_k$, then $I_j \subset I_{j_k}$ and $J_j \cap I_{j_k}= \emptyset.$ Notice that if $K$ is a common dyadic ancestor to $I_j$ and $J_j$, it must contain $I_{j_k}$, for otherwise it is contained in $I_{j_k}$, and thus $I_{j_k}$ contains $J_j$, a contradiction. Thus $K$ is also a common dyadic ancestor of $I_{j_k}$ and $J_j$. But then, by the definition of dyadic distance, we have $\dyad(J_j,I_{j_k}) \leq\dyad(J_j,I_{j}) \leq C_N$, which implies \eqref{CardBound}. Also, notice that for any $j \in A_k \setminus B_k$, by \eqref{LargeAverage} and the balanced hypothesis we have the uniform control 

\begin{align}
 \mu(J_j) & \leq \frac{1}{\lambda} \left(\frac{\mu(J_j)}{\mu(I_j)}\right) \left(\frac{\sqrt{m(I_{j}) m(J_{j})}}{\mu(J_{j})}\right) \int_{I_j} f \, d\mu \nonumber \\
 & \lesssim \frac{1}{\lambda} \int_{I_{j_k}} f \, d\mu \label{UniformWeakType} .
 \end{align}
On the other hand, if there exists $j \in B_k$ that satisfies $J_j \supset I_{j_k}$, then $J_j$ is the only element of $B_k$, since the $J_j$ are pairwise disjoint, and moreover $\mu(J_j)$ is controlled by \eqref{UniformWeakType} by the same argument as above. In this case, $|A_k|\leq C_N+1$ and we can redefine $B_k= \emptyset$ in the argument that follows. In the second case, we have $J_j \subsetneq I_{j_k}$ for all $j \in B_k$, but this time $B_k$ can be infinite. We have
\begin{equation} \bigcup_{j \in B_k} J_j \subset I_{j_k}, \label{WeakTypeContainment} \end{equation}
and moreover, by the definition of $I_{j_k}$ and using the fact that $m(I_{j_k}) \sim m(J_{j_k})$ by the balanced condition with implicit constant depending only on $N$:
\begin{align}
\mu(I_{j_k}) & \leq  \frac{1}{\lambda} \frac{\sqrt{m(I_{j_k}) m(J_{j_k})}}{\mu(J_{j_k})} \int_{I_{j_k}} f \, d\mu \nonumber\\
& \lesssim \frac{1}{\lambda} \int_{I_{j_k}} f \, d\mu \label{UniformWeak2}.    
\end{align}
Collecting these facts together, we then have
\begin{align*}
\mu(E_\lambda) & \leq \sum_{j=1}^{\infty} \mu(J_j)\\
& = \sum_{k=1}^{\infty} \sum_{j \in A_k \setminus B_k} \mu(J_j) +\sum_{k=1}^{\infty} \sum_{j \in B_k} \mu(J_{j}) \\
& := \mathrm{(I)} + \mathrm{(II)}. 
\end{align*}
Note that for $\mathrm{(I)}$, using \eqref{CardBound} and \eqref{UniformWeakType} and recalling that the $I_{j_k}$ are pairwise disjoint, we have the control 
\begin{align*}
\sum_{k=1}^{\infty} \sum_{j \in A_k \setminus B_k} \mu(J_j) & \lesssim \frac{1}{\lambda}  \sum_{k=1}^{\infty} \sum_{j \in A_k \setminus B_k} \int_{I_{j_k}} f \, d\mu\\
& \leq \frac{(C_N+1)}{\lambda} \sum_{k=1}^{\infty} \int_{I_{j_k}} f \, d\mu \\
& \lesssim \frac{\|f\|_{\Lone(\mu)}}{\lambda},
\end{align*}
which is the bound we want. We bound $\mathrm{(II)}$ using the pairwise disjointess of the $J_j$, \eqref{WeakTypeContainment}, and \eqref{UniformWeak2}: 
\begin{align*}
\sum_{k=1}^{\infty} \sum_{j \in B_k} \mu(J_{j}) & \leq \sum_{k=1}^{\infty} \mu(I_{j_k})\\
& \lesssim \frac{1}{\lambda} \sum_{k=1}^{\infty} \int_{I_{j_k}} f \, d\mu \\
& \lesssim \frac{\|f\|_{\Lone(\mu)}}{\lambda},
\end{align*}
which completes the proof. 
\end{proof}

$\Maximal ^N$ admits the same sparse domination as any $T\in\HS(s,t)$ for $N=s+t$. As before, we prove the sparse domination for the bi-sublinear form defined by it. 

 \begin{theorem}\label{MaximalSparse}
 Let $N \in \mathbb{N}.$ There exists $\eta \in (0,1)$ so that for each pair of compactly supported, bounded, nonnegative functions $f_1, f_2 \in \Lone(\mu)$, there exists an $\eta$-sparse collection $\Ss \subset \D$ so that 

 \begin{equation}
 \mathlarger{|} \left \langle \Maximal ^N f_1, f_2 \right \rangle  \mathlarger{|}  \lesssim (\mathcal{A}_{\Ss}(f_1,f_2)+ \mathcal{C}_{\Ss}^N(f_1,f_2)).
 \label{MaximalSparseBound} \end{equation}
\end{theorem}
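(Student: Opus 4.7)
The plan is to follow the template of the proof of Theorem \ref{th:ModifiedSparse} closely, with the Haar shift replaced by the positive operator $\Maximal^N$. First, assuming $f_1, f_2 \geq 0$ are supported in some $\tilde I_0 \in \D$, set $I_0 = \tilde I_0^{(N+1)}$ and form the sparse collection $\Ss = \{I_0\} \cup \bigcup_{k\geq 0} \mathcal{B}_k(I_0)$ via the iterated simultaneous Calder\'on--Zygmund decomposition at heights $\lambda_j = 16\langle f_j\rangle_I$; sparsity of this collection was already verified in the proof of Theorem \ref{th:ModifiedSparse}. In particular, the corona decomposition $\D(I_0) = \bigcup_{I \in \Ss} \mathcal{G}(I)$ holds. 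To exploit this structure I linearize the supremum defining $\Maximal^N f_1$: for each $x$ in the support of $f_2$, choose nearly optimal $J(x), K(x) \in \D$ with $x \in K(x)$, $\dyad(J(x), K(x)) \leq N+2$, and $\Maximal^N f_1(x) \leq 2\,\Coneb(J(x), K(x))\,\langle f_1\rangle_{J(x)}$. Partitioning $\operatorname{supp}(f_2)$ into pairwise disjoint measurable sets $E_{J,K} \subseteq K$ accordingly yields
\[
\langle \Maximal^N f_1, f_2\rangle \;\leq\; 2\sum_{\substack{J,K \in \D:\\ \dyad(J,K) \leq N+2}} \Coneb(J,K)\, \langle f_1\rangle_J \int_{E_{J,K}} f_2\,d\mu.
\]

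The heart of the argument is an iteration lemma analogous to Lemma \ref{lem:IterationLemma}: for each $I \in \Ss$, the partial sum restricted to pairs $(J,K)$ with $K \in \mathcal{G}(I)$ is bounded by
\[
\langle f_1\rangle_I \langle f_2\rangle_I \mu(I) \;+\; \sum_{\substack{S, T \in \mathcal{B}^N(I):\\ \dyad(S,T) \leq N+2}} \langle f_1\rangle_S \langle f_2\rangle_T \sqrt{m(S)\, m(T)}.
\]
Diagonal contributions ($J = K$, so $\Coneb(K,K)=1$) reduce to the classical maximal-function estimate: on $\mathcal{G}(I)$ the averages $\langle f_1\rangle_K$ are at most $\lesssim \langle f_1\rangle_I$ by the stopping condition, and the sets $E_{K,K}$ are pairwise disjoint subsets of $I$, so summing gives $\lesssim \langle f_1\rangle_I \int_I f_2\,d\mu = \langle f_1\rangle_I \langle f_2\rangle_I \mu(I)$. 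Off-diagonal contributions ($J \neq K$) are first estimated by
\[
\Coneb(J,K)\, \langle f_1\rangle_J \int_{E_{J,K}} f_2\,d\mu \;\leq\; \sqrt{m(J) m(K)}\, \langle f_1\rangle_J \langle f_2\rangle_K,
\]
and then iterated applications of the balanced condition (at most $N$ times) allow one to relocate the averages onto intervals $S, T \in \mathcal{B}^N(I) \cup \{I\}$ with $\dyad(S,T) \leq N+2$, exactly mirroring item $\mathrm{(IV)}$ of the proof of Lemma \ref{lem:IterationLemma}.

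Summing the iteration lemma over $I \in \Ss$ and invoking Lemma \ref{lem:SparseReduction} then converts the correction sum into $\mathcal{C}_{\Ss'}^N$ for a possibly enlarged but still sparse collection $\Ss'$, completing the estimate. The main obstacle is the off-diagonal step: unlike the Haar shift setting, $\Maximal^N$ is positive and non-linear, so there is no Haar cancellation to exploit, and the averages of $f_1$ on $J$ and $f_2$ on $K$ must be controlled directly. This is precisely where the balanced hypothesis $m(I) \sim m(\widehat{I})$ is indispensable: iterating it at most $N$ steps keeps the resulting $S, T$ within the first $N$ generations of the stopping tree and bounds the loss by a constant depending only on $\mu$ and $N$, so that the procedure terminates at sparse intervals with the needed control.
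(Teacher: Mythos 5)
Your proposal attempts to transplant the Calder\'on--Zygmund iteration argument from the Haar shift case (Theorem \ref{th:ModifiedSparse} via Lemma \ref{lem:IterationLemma}) onto the positive linearization of $\Maximal^N$. This does not go through as stated, and the gap is in the off-diagonal part of your claimed iteration lemma.

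In the Haar shift case, Lemma \ref{lem:IterationLemma} rests on two structural features that have no analogue for $\Maximal^N$. First, the good-good term $\mathrm{(I)}$ is controlled via orthogonality of the Haar system, $\sum_J |\langle g_1, h_{J_s^m}\rangle||\langle h_{J_t^n},g_2\rangle| \leq \|g_1\|_2\|g_2\|_2$; your positive linearization $Tf_1 = \sum_{J,K}\Coneb(J,K)\langle f_1\rangle_J \one_{E_{J,K}}$ has no such orthogonality, so there is no replacement estimate unless you exploit the disjointness of $\{E_{J,K}\}$. Second --- and more critically --- in items $\mathrm{(II)}$--$\mathrm{(IV)}$ the pairs $(J,K)$ that contribute are drastically constrained by the support \emph{and cancellation} of the bad functions $b_{j,k}$: the cancellation forces $J_t^n \subsetneq I_k^{(2)}$ and the support forces $I_k \subsetneq J$ or $J\subset I_k^b$, so only finitely many $J$ pair with a given selected interval. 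Once you drop the good/bad splitting (as you must, since $\Maximal^N$ is not linear and you cannot pair $b_{1,j}$ against anything), there is no mechanism that restricts the contributing pairs $(J,K)$ with $K\in\mathcal{G}(I)$, $J\neq K$, $\dyad(J,K)\leq N+2$. After you discard the disjointness of $\{E_{J,K}\}$ by bounding $\int_{E_{J,K}} f_2 \leq \int_K f_2$, you are left with a sum of the form $\sum_{K\in\mathcal{G}(I)}\sum_{J}\sqrt{m(J)m(K)}\langle f_1\rangle_J\langle f_2\rangle_K$, which is $\gtrsim\langle f_1\rangle_I\langle f_2\rangle_I\sum_{K\in\mathcal{G}(I)} m(K)$ whenever both averages stabilize; this diverges (e.g.\ for $\mu$ Lebesgue and $f_1,f_2$ constant on $I$, there are no stopping intervals, $\mathcal{G}(I)=\D(I)$, and $\sum_{K\in\D(I)}m(K)\sim\sum_{K\in\D(I)}|K|=\infty$). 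The balanced condition controls ratios $m(I)/m(\widehat{I})$ but cannot restore summability. So the claimed estimate for the restricted sum over $\mathcal{G}(I)$ is false, and ``mirroring item $\mathrm{(IV)}$'' is not available because item $\mathrm{(IV)}$ is really a statement about $b_1,b_2$, not about $f_1,f_2$. You would need to exploit the pairwise disjointness of the $E_{J,K}$ \emph{globally} (across all coronas) rather than term by term, which your argument does not do.

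The paper's proof instead runs a level set argument: it defines $E_n=\{\Maximal^N f_1>2^n\}$, covers $E_n\setminus E_{n+1}$ by maximal dyadic intervals $J_n^j$ with companions $I_n^j$ realizing the supremum up to a factor of $2$, and builds the sparse collection directly from these intervals (together with the finite-overlap bookkeeping $A_n^k, B_n^k$ borrowed from the weak $(1,1)$ proof of Lemma \ref{ModMaxBounds}). The level set structure is what replaces cancellation: it guarantees that on the $n$-th level slab the value of $\Maximal^N f_1$ is $\sim 2^n$, so the disjointness of the slabs is automatically exploited, and the sparsity of the resulting collection is then verified by a separate Carleson packing argument (the dedicated lemma following Theorem \ref{MaximalSparse}). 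If you want to pursue a corona-style approach, you would at minimum have to either (a) carry the disjointness of $\{E_{J,K}\}$ across coronas in the off-diagonal sum, or (b) replace orthogonality by a Carleson embedding type inequality that respects the positivity of $T$; neither is done here.
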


 \begin{proof}
  Fix $f_1,f_2$ as in the statement. By Lemma \ref{lem:SparseReduction}, it suffices to show there exists $\eta \in (0,1)$ and an $\eta$-sparse collection $\Ss$ so that 

  \begin{equation}
 \mathlarger{|} \left \langle \Maximal ^N f_1, f_2 \right \rangle  \mathlarger{|}  \lesssim \mathcal{A}_{\Ss}(f_1,f_2)+ \sum_{\substack{J, K \in \Ss:\\ 1 \leq \dyad(J,K) \leq N+2}}\langle f_1 \rangle_{J} \langle f_2 \rangle_{K}\, \, \sqrt{m(J)} \sqrt{m(K)}.   
 \label{MaximalSparseRedu}\end{equation}
We will use a level set argument to prove \eqref{MaximalSparseRedu}. For $n \in \mathbb{Z}$ set 
$$
E_n=\{x \in \mathbb{R}: \Maximal ^Nf_1(x)>2^n\}.
$$
Cover $E_n \setminus E_{n+1}$ by maximal dyadic intervals $\{J_n^j\}_{j=1}^{\infty}$ so that for each $J_n^j$, there exists a corresponding $I_n^j \in \D$ satisfying $$ 2^n < \Coneb(I_n^j,J_n^j) \langle f_1 \rangle_{I_n^j} \leq 2^{n+1}, \quad \dyad(I_n^j,J_n^j) \leq N+2.$$
For fixed $n$, $\{J_n^j\}$ is a pairwise disjoint collection of intervals, although this is no longer necessarily the case when one varies the parameter $n$ and moves up and down level sets. However, even for fixed $n$, the collection $\{I_n^j\}$ need not be pairwise disjoint. Even so, by using an argument similar to that in Lemma \ref{ModMaxBounds}, in the bound we obtain we can replace $\{I_n^j\}$ with a subcollection of intervals which satisfy a finite overlap condition. We next describe the details of passing to this subcollection and obtaining the sparse bound. First, for fixed $n$, select a maximal subsequence $\{I_n^{j_k}\}_{k=1}^{\infty}$ among the $I_n^j.$ This collection will be pairwise disjoint. Then, for each $k$, define index collections $A_n^k$ and $B_n^k$ as in the proof of Lemma \ref{ModMaxBounds}. Recall we argued that for each $k$, we have $|A_n^{k} \setminus B_n^k|\leq C_N$, and this bound is independent of $n$ as well. If $|B_n^k|=1$, let $\ell(k) \in B_n^k$ denote the unique index so that $I_{n}^{\ell(k)} \subset I_{n}^{j_k}$ and $J_{n}^{\ell(k)} \cap I_{n}^{j_k} \neq \emptyset.$ Let $C_n$ denote the collection of indices $k$ so that $|B_n^k|=1$ and $J_{n}^{\ell(k)} \supset I_{n}^{j_k}.$ Set 
\begin{equation*} \Ss_n^1:= \{J_n^j\}_{j=1}^{\infty}, \quad \Ss_n^2:= \{I_n^{j_k}\}_{k=1}^{\infty}, \quad \Ss_n^3:=\bigcup_{k=1}^{\infty} \{I_n^j\}_{j \in A_n^k \setminus B_{n}^k}, \quad    \Ss_n^4:=  \{I_n^{\ell(k)}\}_{k \in  C_n} .
\end{equation*}
The collections $\Ss_n^1$, $\Ss_n^1$, and $\Ss_n^4$ are pairwise disjoint by construction, while the collection $\Ss_{n}^3$ has finite overlap. Next, we estimate:
\begin{align*}
 \mathlarger{|} \left \langle \Maximal ^N f_1, f_2 \right \rangle  \mathlarger{|}  & = \int_{\mathbb{R}} \Maximal ^N f_1 \cdot f_2 \, d \mu\\
 & \sim \sum_{n =-\infty}^{\infty} 2^n \int_{E_n \setminus E_{n+1}} f_2 \, d\mu\\
 & \leq  \sum_{n =-\infty}^{\infty} 2^n  \sum_{j=1}^{\infty}\int_{J_n^j} f_2 \, d\mu\\
 & = \sum_{n =-\infty}^{\infty} 2^n \sum_{k=1}^{\infty} \sum_{j \in A_n^k \setminus B_n^k}\int_{J_n^j} f_2 \, d\mu + \, \sum_{n =-\infty}^{\infty} 2^n \sum_{k=1}^{\infty} \sum_{j \in B_n^k}\int_{J_n^j} f_2 \, d\mu.
 \end{align*}
For the first sum, we have:
\begin{align*}
\sum_{n =-\infty}^{\infty} 2^n \sum_{k=1}^{\infty} \sum_{j \in A_n^k \setminus B_n^k}\int_{J_n^j} f_2 \, d\mu  & \lesssim \sum_{n =-\infty}^{\infty}  \sum_{k=1}^{\infty} \sum_{j \in A_n^k \setminus B_n^k} \Coneb(I_n^j,J_n^j) \langle f_1 \rangle_{I_n^j} \langle f_2 \rangle_{J_n^j} \mu(J_n^j)\\
& \lesssim \sum_{n =-\infty}^{\infty}  \sum_{\substack{I \in \Ss_n^3,\, J \in \Ss_n^1:\\ \operatorname{dist}_{\D(I,J) \leq N+2}}}  \langle f_1 \rangle_{I} \langle f_2 \rangle_{J} \sqrt{m(I)m(J)}.
\end{align*}
For the second sum, we first consider the case when $k \in C_n$. Then it is easy to see we arrive at the estimate:
$$ \sum_{n =-\infty}^{\infty} 2^n \sum_{k \in C_n} \int_{J_n^{\ell(k)}} f_2 \, d\mu \lesssim  \sum_{n =-\infty}^{\infty} \sum_{\substack{I \in \Ss_n^4, \, J \in \Ss_n^1:\\ \operatorname{dist}_{\D(I,J) \leq N+2}}}  \langle f_1 \rangle_{I} \langle f_2 \rangle_{J} \sqrt{m(I)m(J)}.   $$
If $k \not\in C_n$, then any $j' \in B_n^k$ must satisfy $J_n^{j'} \subset I_{n}^{j_k}.$ Then we have, using the pairwise disjointness of the $J_n^{j'}$:
\begin{align*}
\sum_{n =-\infty}^{\infty} 2^n \sum_{k \not \in C_n} \sum_{j' \in B_n^k} \int_{J_n^{j'}} f_2 \, d\mu
& \lesssim \sum_{n =-\infty}^{\infty} \sum_{k \not\in C_n} \Coneb(I_n^{j_k}, J_n^{j_k}) \langle f_1 \rangle_{I_n^{j_k}} \int_{I_n^{j_k} } f_2 \, d\mu\\
& \lesssim \sum_{n =-\infty}^{\infty}  \sum_{I \in \Ss_n^2}  \langle f_1 \rangle_{I} \langle f_2 \rangle_{I} \mu(I).
\end{align*}
Put $$\Ss^j= \bigcup_{n=-\infty}^{\infty} \Ss_n^j, \quad \mbox{ and } \quad \Ss= \bigcup_{j=1}^{4} \Ss^j.$$ Altogether, we have proven the desired estimate \eqref{MaximalSparseRedu} for the collection $\Ss.$ We claim that the collection $\Ss$ is sparse. For readability, we separate out this technical section of the proof into a dedicated lemma. The proof of Theorem \ref{MaximalSparse} is complete, up to the verification of the sparsity of $\Ss$.
\end{proof}

\begin{lem} There exists $\eta \in (0,1)$ so that the collections $\Ss_j$, constructed in the proof of Theorem \ref{MaximalSparse}, are individually $\eta$-sparse, for $j \in \{1,2,3, 4\}.$ Therefore, $\Ss$ is $\eta'$-sparse for some $0<\eta'<\eta$.
\end{lem}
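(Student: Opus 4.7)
The plan is to verify that each of the four families $\Ss^j$, $j \in \{1,2,3,4\}$, is individually $\eta_j$-sparse; the sparsity of the union $\Ss = \bigcup_{j=1}^4 \Ss^j$ then follows from the observation, recorded just after \eqref{eq:CarlesonPacking}, that a finite union of sparse families is sparse. I would verify sparsity of each $\Ss^j$ by means of the Carleson packing characterization \eqref{eq:CarlesonPacking}, so the goal reduces to showing $\sum_{J \in \Ss^j,\, J \subseteq I_0} \mu(J) \lesssim \mu(I_0)$ for every $I_0 \in \D$.

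The three main tools are: (i) the pairwise disjointness of $\Ss_n^j$ at each fixed level $n$ for $j \in \{1,2,4\}$, together with the bounded overlap $\leq C_N$ for $j=3$; (ii) the weak-type $(1,1)$ bound for $\Maximal^N$ from Lemma \ref{ModMaxBounds}; and (iii) the balanced condition, which forces $m(I) \sim m(J)$ whenever $\dyad(I,J) \leq N+2$, combined with the elementary bound $m(K) \leq \mu(K)/4$ valid for any dyadic interval $K$. These facts together give $\Coneb(I,J) \lesssim 1$ whenever $\mu(I) \sim \mu(J)$, and hence $\langle f_1 \rangle_{I_n^{j_k}} \gtrsim 2^n$ from the defining inequality $\Coneb(I_n^{j_k},J_n^{j_k})\langle f_1 \rangle_{I_n^{j_k}} > 2^n$, as well as $\mu(J_n^j) \lesssim \mu(I_n^j)$ after iterating the balanced bookkeeping at most $N+2$ times.

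For $\Ss^1$, the maximal intervals $\{J_n^j\}_j$ are pairwise disjoint and cover $E_n \setminus E_{n+1} \subseteq \{\Maximal^N f_1 > 2^n\}$; summing the measures of those $J_n^j$ contained in $I_0$ and invoking Lemma \ref{ModMaxBounds} applied to $f_1 \one_{\widetilde{I}_0}$, where $\widetilde{I}_0$ is a small dyadic ancestor enlargement of $I_0$ accommodating pairs $(I_n^j, J_n^j)$ whose $I$-component sits just outside $I_0$, yields a geometric series bounded by $\mu(I_0)$ once the sum over $n$ is split at the threshold $n_0 \sim \log_2 \langle f_1 \rangle_{\widetilde{I}_0}$. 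For $\Ss^2$, the pairwise disjoint intervals $\{I_n^{j_k}\}_k$ satisfy $\langle f_1 \rangle_{I_n^{j_k}} \gtrsim 2^n$, and the packing follows from the same split plus a Calder\'on--Zygmund-type stopping argument in the low-level regime. The cases $\Ss^3$ and $\Ss^4$ are subordinate to $\Ss^2$: for $\Ss^3$, the cardinality bound $|A_n^k \setminus B_n^k| \leq C_N$ together with the containment $I_n^j \subseteq I_n^{j_k}$ for $j \in A_n^k$ gives $\sum_{I \in \Ss_n^3,\, I \subseteq I_0} \mu(I) \leq C_N \sum_{k:\, I_n^{j_k}\subseteq I_0} \mu(I_n^{j_k})$, while $\Ss^4$ is a subcollection of the original intervals $\{I_n^j\}$ and is handled in parallel with $\Ss^2$.

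The principal obstacle I anticipate is the low-level regime $n \leq n_0$ for $\Ss^1$ and $\Ss^2$: the trivial pairwise-disjoint bound at each single level only gives $\mu(I_0)$, so summing over all such levels would produce a spurious logarithmic factor depending on $\|f_1\|_{\Lone}/\mu(I_0)$. The clean way to sidestep this is to relocalize the selection to $I_0$, so that the weak-type estimate is applied only to $f_1 \one_{\widetilde{I}_0}$ and the logarithmic tail collapses into an $O(\mu(I_0))$ contribution, giving the uniform Carleson constant required for sparsity.
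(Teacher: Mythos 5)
Your high-level plan (verify \eqref{eq:CarlesonPacking} for each $\Ss^j$ separately, then union sparse families) matches the paper, and the identification of the difficulty at low levels is correct. However, the key device you propose to close the argument — testing \eqref{eq:CarlesonPacking} against an arbitrary $I_0 \in \D$ and applying the weak-type bound to $f_1\one_{\widetilde{I}_0}$ with $\widetilde{I}_0 = I_0^{(N+2)}$ — does not actually close the gap when $\mu$ is non--dyadically doubling (which is precisely the interesting case). Summing the level set estimate $\mu(\{\Maximal^N(f_1\one_{\widetilde{I}_0})>2^n\}) \lesssim 2^{-n}\|f_1\one_{\widetilde{I}_0}\|_{\Lone(\mu)}$ from your threshold $n_0 \sim \log_2 \langle f_1\rangle_{\widetilde{I}_0}$ upwards produces $\lesssim \mu(\widetilde{I}_0)$, not $\lesssim \mu(I_0)$; since balanced measures need not be dyadically doubling (e.g.\ the measure in Proposition \ref{prop:SparseFailure}), the ratio $\mu(\widetilde{I}_0)/\mu(I_0)$ is unbounded. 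Meanwhile for $n < n_0$ your own accounting gives $\mu(I_0)$ per level, which still accumulates a logarithm. So neither regime is controlled.

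The paper avoids all of this by proving a \emph{relative} packing inequality localized at an interval of the family itself rather than at an arbitrary ancestor: it shows that for a sufficiently large $m$ depending only on $N$, the intervals of $\Ss^1$ at level $n' = n+m$ that lie inside a given $J_n^j$ have total measure $\leq \tfrac12\mu(J_n^j)$, and then invokes the standard iteration to deduce \eqref{eq:CarlesonPacking}. The crucial extra structure exploited here is a dichotomy you do not address: either $I_{n'}^{j'}$ escapes $J_n^j$ — in which case $\dyad(J_{n'}^{j'},J_n^j)\leq N+2$ forces only $C_N$ such terms, each controlled using the fact that $J_n^j$ meets $E_{n+1}^c$ so that $\Maximal^N f_1 \leq 2^{n+1}$ somewhere on $J_n^j$ — or $I_{n'}^{j'}\subset J_n^j$, in which case the weak-type estimate for $\Maximal^N(\one_{J_n^j}f_1)$ applies and the localization is at $J_n^j$ itself, not at an inflated ancestor, so no doubling is needed. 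A similar split (at the threshold $a$ defined by $\langle f_1\rangle_{I_n^{j_k}}$, again an average over the interval being tested rather than an enlargement) is what makes the $\Ss^2$ case close. Without identifying this dichotomy and localizing at the sparse intervals themselves, the weak-type estimate alone cannot yield a Carleson constant independent of $I_0$ in the non-doubling regime.

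Your reduction of $\Ss^3$ to $\Ss^2$ via $|A_n^k\setminus B_n^k|\leq C_N$ and $I_n^j\subseteq I_n^{j_k}$ is in the right spirit and essentially what the paper leaves to the reader, but it should also account for the case where an interval of $\Ss_n^3$ lies inside $I_0$ while its governing $I_n^{j_k}$ does not; this is routine once the $\Ss^2$ argument is in place, but should not be omitted entirely.
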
 
 
\begin{proof}
We first argue for the sparsity of $\Ss^1.$ It is enough to show that $\Ss^1$ is a union of finitely many sparse collections. To begin with, note that if $J_{n'}^{j'} \subset J_n^j$ for some indices $n,j,n',j'$, we necessarily have $n' \geq n$, since $J_n^j \subset E_n$ and $J_{n'}^{j'} \cap E_{n'+1}^c \neq \emptyset$ by construction. By a standard argument, it suffices to show that there exists a sufficiently large positive integer $m$ (depending only on $N$) so that for $n'=n+m$, we have, for each interval $J_n^j \in \Ss_n^1$,
\begin{equation} 
\sum_{j':\, J_{n'}^{j'} \subset J_n^j}\mu(J_{n'}^{j'}) \leq \frac{1}{2} \mu(J_n^j) \label{CPackingMaximal}.
\end{equation}
First, we split the left hand side of \eqref{CPackingMaximal} as follows:

\begin{align*}
& \sum_{\substack{j':\, J_{n'}^{j'} \subset J_n^j\\ I_{n'}^{j'} \not\subset J_n^j}}\mu(J_{n'}^{j'}) + \sum_{\substack{j':\, J_{n'}^{j'} \subset J_n^j\\ I_{n'}^{j'} \subset J_n^j}}\mu(J_{n'}^{j'})\\
& := \mathrm{(I)}+\mathrm{(II)}.
\end{align*}
To estimate $\mathrm{(I)}$, notice that if $I_{n'}^{j'} \not\subset J_n^j$, we have $\dyad(J_{n'}^{j'}, J_n^j) \leq \dyad(J_{n'}^{j'}, I_{n'}^{j'})\leq N+2$, so the summation actually has at most $C_N$ terms. Moreover, we also have $\dyad(I_{n'}^{j'}, J_n^j) \leq N+2$.  Then we estimate, using these facts, the balanced hypothesis, and the fact that $J_n^j$ intersects $E_{n+1}^c$:
\begin{align*}
\mathrm{(I)} & \leq \frac{1}{2^{n'}} \sum_{\substack{j':\, J_{n'}^{j'} \subset J_n^j\\ I_{n'}^{j'} \not\subset J_n^j}} \sqrt{m(I_{n'}^{j'}) m(J_{n'}^{j'})} \langle f_1 \rangle_{I_{n'}^{j'}} \\
& \lesssim \frac{1}{2^{n'}} \sum_{\substack{j':\, J_{n'}^{j'} \subset J_n^j\\ I_{n'}^{j'} \not\subset J_n^j}} \sqrt{m(I_{n'}^{j'}) m(J_{n}^{j})} \langle f_1 \rangle_{I_{n'}^{j'}}\\
& \leq \frac{1}{2^{m-1}} \sum_{\substack{j':\, J_{n'}^{j'} \subset J_n^j\\ I_{n'}^{j'} \not\subset J_n^j}} \mu(J_n^j) =  \frac{C_N}{2^{m-1}} \mu(J_n^j),
\end{align*}
so we clearly have the desired control on $\mathrm{(I)}$ assuming a sufficiently large choice of $m.$ To control $\mathrm{(II)}$, we begin by observing that by definition, 
$$
\langle f_1 \rangle_{J_n^j}= \Coneb(J_n^j,J_n^j) \langle f_1 \rangle_{J_n^j} \leq 2^{n+1}   , \quad \text{and} \quad \bigcup_{\substack{j':\, J_{n'}^{j'} \subset J_n^j\\ I_{n'}^{j'} \subset J_n^j}} J_{n'}^{j'}\subset \left\{x \in J_n^j: \Maximal ^N(\one_{J_n^j}f_1)(x) > 2^{n'} \right \}.
$$
Then we may estimate, using the pairwise disjointness of $J_{n'}^{j'}$ and the weak-type estimate in the proof of Lemma \ref{ModMaxBounds}:
\begin{align*}
\mathrm{(II)} & \leq \mu\left( \left\{x \in J_n^j: \Maximal ^N(\one_{J_n^j}f_1)(x) > 2^{n'} \right \}\right)\\
& \leq \frac{\|\Maximal ^N\|_{\Lone(\mu) \rightarrow \Lonew(\mu)}}{2^{n'}} \int_{J_n^j}f_1 \, d \mu \\
& \leq \frac{\|\Maximal ^N\|_{\Lone(\mu) \rightarrow \Lonew(\mu)}}{2^{m-1}} \mu(J_n^j),
\end{align*}
which also  establishes the necessary control for $\mathrm{(II)}$ as long as $m$ is chosen large enough. Thus, we conclude the collection $\Ss^1$ is sparse.

Next, is not difficult to show that if we prove the collection $\Ss^2$ is sparse, then this will also imply that collections $\Ss^3$ and $\Ss^4$ are sparse, so we focus on checking the Carleson packing condition for each $I=I_{n}^{j_k} \in \Ss^2.$ To begin with, choose $a$ to be the unique positive integer satisfying $$2^a<\langle f_1 \rangle_{I_n^{j_k}}= \Coneb(I_n^{j_k},I_{n}^{j_k})\langle f_1 \rangle_{I_n^{j_k}} \leq 2^{a+1}. $$ This implies that $I_{n}^{j_k} \subset E_a.$ Now, we split
\begin{equation} \sum_{\substack{K \in \Ss^2:\\ K \subsetneq I_n^{j_k}}} \mu(K)= \sum_{n'< a} \sum_{\substack{K \in \Ss_{n'}^2:\\ K \subsetneq I_n^{j_k}}} \mu(K)+ \sum_{n' \geq a} \sum_{\substack{K \in \Ss_{n'}^2:\\ K \subsetneq I_n^{j_k}}} \mu(K). \label{InteriorSplit} \end{equation}
Take $K=I_{n'}^{j_\ell'}$ as in the first sum. Note we must have $J_{n'}^{j_{\ell}'} \cap I_{n}^{j_k}= \emptyset.$ Indeed, $I_{n}^{j_k}$ is contained in $E_{a}$, while $J_{n'}^{j_{\ell}'}$ nontrivially intersects $E_{n'+1}^c$, which is disjoint from $E_a$ when $n'< a.$ Therefore, $\dyad(I_{n'}^{j_{\ell}'}, I_n^{j_k}) \leq \dyad(J_{n'}^{j_{\ell}'}, I_{n'}^{j_\ell'}) \leq N+2$, so the first sum in \eqref{InteriorSplit} is finite and in fact is bounded by $C_N \mu(I_{n}^{j_k}).$ The second summation  may be estimated as follows, using the pairwise disjointness of each collection $\Ss_{n'}^2$ and our choice of $a$:

\begin{align*}
\sum_{n' \geq a} \sum_{\substack{I_{n'}^{j_\ell'} \in \Ss_{n'}^2:\\ I_{n'}^{j_\ell'} \subsetneq I_n^{j_k}}} \mu(I_{n'}^{j_\ell'}) & \leq \sum_{n' \geq a}  \frac{1}{2^{n'}} \sum_{\substack{I_{n'}^{j_\ell'} \in \Ss_{n'}^2:\\ I_{n'}^{j_\ell'} \subsetneq I_n^{j_k}}} C_{1}(I_{n'}^{j_\ell'}, J_{n'}^{j_\ell'}) \int_{I_{n'}^{j_\ell'}} f_1 \, d \mu \\
&  \lesssim \sum_{n' \geq a}  \frac{1}{2^{n'}} \int_{I_n^{j_k}} f_1 \, d \mu  \\
& \leq  \sum_{n' \geq a}  \frac{1}{2^{n'}} 2^{a+1} \mu(I_{n}^{j_k}) \\
& \lesssim \mu(I_{n}^{j_k}).
\end{align*}
This bound proves the Carleson packing condition for $I$ and completes the proof. 
\end{proof}

\begin{rem} \label{SparseDiffComplex}
The sparse domination given in Theorem \ref{th:ModifiedSparse} depends on the complexity in an essential way. To prove this, we next show that a complexity $2$ Haar shift cannot be dominated by a complexity $1$ sparse form by a straightforward adaptation of Proposition \ref{prop:SparseFailure}. Define
$$ 
Tf:= \sum_{I \in \mathscr{D}} \langle f, h_I \rangle h_{I_{--}}, \quad f \in \Ltwo(\mu).
$$
Fix $0 < \eta < 1$ and let $\mu$ be as in Proposition \ref{prop:SparseFailure}. We will show there exist two sequences of compactly supported functions $\{f_j\},\{g_j\} \subset \Ltwo(\mu)$ such that for all $\eta$-sparse families $\Ss\subset \D$ 
\begin{equation}\label{sparsefailure2}
|\langle T f_j, g_j \rangle | \gtrsim j (|\A_\Ss(f_j,g_j)| +|\mathcal{C}_{\Ss}^1(f_j,g_j)| ).    
\end{equation}
 For $j \geq 2$, we take $f_j= \one_{I_{j-1}^b}$ and $g_j= \one_{I_{j+1}^b}.$ As a preliminary, we notice $f_j$ and $g_j$ have disjoint supports and if $I \in \D$ intersects both $\text{supp} (f_j)$ and $\text{supp} (g_j)$ non-trivially, we must have $I=I_k$ with $0 \leq k \leq j-2.$ We also observe

 \begin{equation} \Maximal ^1(f_j)(x) \lesssim \frac{\mu(I_{j-1}^b)}{\mu(I_{k-1})}, \,  \Maximal (g_j)(x) =\frac{\mu(I_{j+1}^b)}{\mu(I_{k-1})}, \, x \in I_{k}^b \text{ for }  0 < k \leq  j-3               ; \label{Maximal1PointwiseBound1} \end{equation}

  \begin{equation} \Maximal ^1(f_j)(x) \lesssim 1, \,  \Maximal (g_j)(x) = \frac{\mu(I_{j+1}^b)}{\mu(I_{k-1})}, \, x \in I_{k}^b  \text{ for }  j-2 \leq k \leq  j ; \label{Maximal1PointwiseBound2} \end{equation}

\begin{equation} \Maximal ^1(f_j)(x)\lesssim \frac{\mu(I_{j-1}^b)}{\mu(I_{j-2})}, \,  \Maximal (g_j)(x) = 1, \, x \in I_{j+1}^b ; \label{Maximal1PointwiseBound3} \end{equation}   

 \begin{equation}  \Maximal ^1(f_j)(x) \lesssim \frac{\mu(I_{j-1}^b)}{\mu(I_{j-2})}, \,  \Maximal (g_j)(x) =\frac{\mu(I_{j+1}^b)}{\mu(I_{j})}, \, x \in I_{k}^b \text{ for }  k > j+1  \label{Maximal1PointwiseBound4}  \end{equation}
 Using the sparsity of $\Ss$ and equations \eqref{Maximal1PointwiseBound1} through \eqref{Maximal1PointwiseBound4}, we can estimate the sums on the right hand side of \eqref{sparsefailure2} directly:

 \begin{align*}
 \sum_{I \in \Ss}  \langle f_j \rangle_{I}  \langle g_j \rangle_{I} \mu(I) & + \sum_{\substack{J, K \in \Ss:\\ \dyad(J,K) = 3,\\ J \cap K= \emptyset}} \langle f_j \rangle_{J} \langle g_j \rangle_{K}\, \, \sqrt{m(J)} \sqrt{m(K)} \\
 & \lesssim \sum_{I \in \Ss}  \langle f_j \rangle_{I}  \langle g_j \rangle_{I} \mu(E_I) + \sum_{\substack{J, K \in \Ss:\\ \dyad(J,K) = 3,\\ J \cap K= \emptyset}} \langle f_j \rangle_{J} \langle g_j \rangle_{K}\, \, \mu(E_K) \frac{\sqrt{m(J)} \sqrt{m(K)}}{\mu(K)} \\
& \lesssim  \int_{[0,1]} \Maximal ^{1} f_j \cdot  \Maximal g_j\, d\mu   \\
 & \leq \sum_{k=1}^{\infty} \int_{I_k^b} \Maximal ^1 f_j \cdot  \Maximal g_j \, d\mu \\
 & \lesssim \sum_{k=1}^{j-3} \left( \frac{\mu(I_{j-1}^b)}{\mu(I_{k-1})} \right) \cdot \left(\frac{\mu(I_{j+1}^b)}{\mu(I_{k-1})}\right)  \cdot   \mu(I_k^b) +    \sum_{k=j-2}^{j} \left(\frac{\mu(I_{j+1}^b)}{\mu(I_{k-1})}\right) \cdot \mu(I_k^b) \\
 & + \left(\frac{\mu(I_{j-1}^b)}{\mu(I_{j-2})}\right)  \cdot \mu(I_{j+1}^b) +  \sum_{k=j+2}^{\infty}\left(\frac{\mu(I_{j-1}^b)}{\mu(I_{j-2})}\right)  \cdot \left(\frac{\mu(I_{j+1}^b)}{\mu(I_{j})}\right)  \cdot\mu(I_k^b)  \\
     & \lesssim  \left(\frac{1}{j^4} \sum_{k=1}^{j-3} 1\right) + \frac{1}{j^3} + \frac{1}{j^3}+ \left(\frac{1}{j^2}\sum_{k=j+2}^{\infty} \frac{1}{k^2} \right) \lesssim \frac{1}{j^3}.
 \end{align*}
 Now we turn to a lower bound for $|\langle T f_j, g_j \rangle|.$ Given $I \in \D$, denote by $S_I$ the simple shift $S_I f= \langle f, h_I \rangle h_{I_{--}}.$ Then, by definition and using the support properties of $f_j$ and $g_j,$ we have $\langle S_I f_j, g_j \rangle= \langle f_j, h_I \rangle \cdot \langle h_{I_{--}}, g_j \rangle \neq 0 $ if and only if $I= I_k$ with $0 \leq k \leq j-2.$ Therefore, we may write
 $$
 \langle T f_j, g_j \rangle = \sum_{k=0}^{j-2} \langle S_{I_k} f_j, g_j \rangle = \sum_{k=0}^{j-2}  \langle f_j, h_{I_k} \rangle \cdot \langle h_{I_{k+2}}, g_j \rangle.
 $$
For the last term in the sum, computations similar to Proposition \ref{prop:SparseFailure} give
 $$\langle f_j, h_{I_{j-2}} \rangle \cdot \langle h_{I_{j}}, g_j \rangle= \sqrt{m(I_{j})m(I_{j-2}}) \sim  \frac{1}{j^2}.$$
 As before, all the other terms in the summation will similarly be positive, so \eqref{sparsefailure2} follows and the proof is complete. Of course, one can adapt this argument to higher complexities in an obvious way. The key point is that the higher complexity maximal function $\Maximal ^N$ only needs to be applied to one of the functions $f_j, g_j$, which leads to a tighter upper bound than if it were applied to both.
\end{rem}

\begin{rem} We can obtain a similar result as Remark \ref{SparseDiffComplex} for the sparse domination of the maximal function $M^N$ in Theorem \ref{MaximalSparse}. In particular, we can show that it is impossible to dominate $\langle M^{N} f, g \rangle$ by a complexity $N '$ sparse form where $N'<N$. The proof is very similar to the Haar shift case. 

\end{rem}


\subsection{$\Lp$ and weak-type estimates} We can now turn to estimates for modified sparse forms. We start with an $\Lp$ one.

\begin{lem}\label{lem:LpBounds}
Let $\Ss$ be sparse and $N\in\N$. For all nonnegative $f_1 \in \Lp(\mu)$ and $f_2 \in L^{p'}(\mu)$, there holds
$$
\mathcal{C}_{\Ss}^N(f_1,f_2) \lesssim_p \|f_1\|_{\Lp(\mu)} \|f_2\|_{L^{p'}(\mu)}.
$$
\end{lem}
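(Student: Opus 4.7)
The plan is to dominate $\mathcal{C}_\Ss^N(f_1,f_2)$ pointwise by the product $\Maximal^N f_1 \cdot \Maximal f_2$ (integrated against $\mu$) and then invoke Hölder together with the $L^p$-boundedness of both maximal operators. The correction factor $\sqrt{m(J)m(K)}$ in the definition of $\mathcal{C}_\Ss^N$ is set up precisely so that, after factoring out one $\mu(K)$, the remaining quantity $\sqrt{m(J)m(K)}/\mu(K) = c_1^b(J,K)$ matches the weight appearing in the definition of $\Maximal^N$.

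First I would rewrite the form by pulling $K$ outside:
\begin{equation*}
\mathcal{C}_\Ss^N(f_1,f_2) = \sum_{K \in \Ss} \langle f_2 \rangle_K \, \mu(K) \sum_{\substack{J \in \Ss:\\ 2<\dyad(J,K) \leq N+2\\ J \cap K=\emptyset}} c_1^b(J,K) \langle f_1 \rangle_J.
\end{equation*}
For every fixed $K$ there are at most $C_N$ admissible $J$'s, and for any $x \in K$ one has $c_1^b(J,K) \langle f_1\rangle_J \leq \Maximal^N f_1(x)$ by definition of $\Maximal^N$. Hence the inner sum is bounded by $C_N \inf_{x \in E_K} \Maximal^N f_1(x) \leq \frac{C_N}{\mu(E_K)} \int_{E_K}\Maximal^N f_1 \, d\mu$.

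Next I would apply sparsity in the form $\mu(K) \lesssim \mu(E_K)$ and use the pairwise disjointness of $\{E_K\}_{K \in \Ss}$ together with $E_K \subset K$ to control $\sum_K \langle f_2\rangle_K \one_{E_K}$ pointwise by the dyadic maximal function $\Maximal f_2$:
\begin{equation*}
\mathcal{C}_\Ss^N(f_1,f_2) \lesssim_N \sum_{K \in \Ss} \langle f_2\rangle_K \int_{E_K} \Maximal^N f_1 \, d\mu = \int_{\R} \Maximal^N f_1(x) \sum_{K \in \Ss} \langle f_2\rangle_K \one_{E_K}(x) \, d\mu(x) \leq \int_{\R} \Maximal^N f_1 \cdot \Maximal f_2 \, d\mu.
\end{equation*}

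Finally, by Hölder's inequality, Lemma \ref{ModMaxBounds} (which gives $\Maximal^N : L^p(\mu) \to L^p(\mu)$), and the standard $L^{p'}(\mu)$-boundedness of $\Maximal$, the last integral is at most $\|\Maximal^N f_1\|_{L^p(\mu)} \|\Maximal f_2\|_{L^{p'}(\mu)} \lesssim_p \|f_1\|_{L^p(\mu)} \|f_2\|_{L^{p'}(\mu)}$, completing the proof. There is no real obstacle here; the only subtlety is recognizing that the precise form of the correction factor $c_1^b$ in Definition \ref{MaximalCousins} was chosen exactly so that the geometric factor $\sqrt{m(J)m(K)}$ in $\mathcal{C}_\Ss^N$ converts cleanly into the $\Maximal^N$-bound after the $\mu(K)$ extracted from sparsity is absorbed.
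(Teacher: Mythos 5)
Your proof is correct, and it takes a genuinely different route from the paper's. You factor out $\mu(K)$ and recognize the resulting weight $\sqrt{m(J)m(K)}/\mu(K) = \Coneb(J,K)$ as precisely the one defining $\Maximal^N$, which lets you pointwise dominate the entire inner sum by $C_N\,\Maximal^N f_1$ on $K$, and then apply sparsity, the pointwise bound $\sum_K \langle f_2\rangle_K \one_{E_K} \leq \Maximal f_2$, H\"older, and the $L^p$/$L^{p'}$-boundedness of $\Maximal^N$ and $\Maximal$ (Lemma \ref{ModMaxBounds}). The paper's proof is more elementary and bypasses $\Maximal^N$ entirely: it uses the balanced hypothesis to write $\sqrt{m(I)m(c_j(I))} \lesssim m(I)^{1/p}m(c_j(I))^{1/p'}$, then $m(\cdot) \leq \mu(\cdot) \lesssim \mu(E_\cdot)$ by sparsity, applies H\"older to split the sum into two $\ell^p/\ell^{p'}$ factors, and controls each by the ordinary dyadic maximal function alone. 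The paper's argument is lighter-weight since it needs no machinery beyond $\Maximal$; yours is perhaps more illuminating since it exhibits $\C^N_\Ss$ as essentially the sparse form of $\Maximal^N$ paired against $\Maximal$, which is consistent with Theorem \ref{MaximalSparse} and foreshadows the role of $\Maximal^N$ in the weighted theory of Section \ref{sec:section3}. Both arguments yield the same constant dependence and are equally valid; just be aware that the paper's proof would still work before Lemma \ref{ModMaxBounds} is established, whereas yours depends on it.
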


\begin{proof}
Fix an integer $j$ satisfying $1 \leq j \leq N'.$ It clearly suffices to prove 
$$
\sum_{ \substack{ I \in \Ss: \\ c_j(I) \in \Ss}} \langle f_1 \rangle_{I} \langle f_2 \rangle_{c_j(I)} \sqrt{m(I) m(c_j(I))} \lesssim \|f_1\|_{\Lp(\mu)} \|f_2\|_{L^{p'}(\mu)},
$$
which we do as follows:
\begin{align*}
\sum_{ \substack{ I \in \Ss: \\ c_j(I) \in \Ss}} \langle f_1 \rangle_{I} \langle f_2 \rangle_{c_j(I)} \sqrt{m(I) m(c_j(I))} & \lesssim \sum_{ \substack{ I \in \Ss: \\ c_j(I) \in \Ss}} \langle f_1 \rangle_{I} \langle f_2 \rangle_{c_j(I)} m(I)^{1/p} m(c_j(I))^{1/p'} \\
& \lesssim \left( \sum_{I \in \Ss} (\langle f_1 \rangle_{I})^p \mu(I) \right)^{1/p} \left( \sum_{J \in \Ss} (\langle f_2 \rangle_{J})^{p'} \mu(J) \right)^{1/p'} \\
& \lesssim \left( \sum_{I \in \Ss} (\langle f_1 \rangle_{I})^p \mu(E_I) \right)^{1/p} \left( \sum_{J \in \Ss} (\langle f_2 \rangle_{J})^{p'} \mu(E_J) \right)^{1/p'}\\
& \leq \|\Maximal  f_1\|_{\Lp(\mu)}  \|\Maximal  f_2\|_{L^{p'}(\mu)}\\
& \leq   \| f_1\|_{\Lp(\mu)}  \| f_2\|_{L^{p'}(\mu)}.
\end{align*} 
where we have used the fact that $\mu$ is balanced in the first step. This finishes the proof. 
\end{proof}

\begin{prop}\label{prop:WeakType} Let $\Ss$ be sparse and $N\in\N$. Then
$$ \sup_{\substack{f_1: \\ \|f_1\|_{\Lone(\mu)} \leq 1}} \sup_{\substack{G \subset \mathbb{R}}} \inf_{\substack{G': \\ \mu(G) \leq 2 \mu(G')}} \sup_{f_2: |f_2| \leq \one_{G'}} \mathcal{C}_{\Ss}^N(|f_1|, |f_2|) < \infty.$$
\end{prop}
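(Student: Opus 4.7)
The plan is to establish the restricted weak-type $(1,1)$ bound for $\mathcal{C}_\Ss^N$ by adapting the standard exceptional-set argument, relying on the weak-type bound of $\Maximal^N$ from Lemma \ref{ModMaxBounds}. Fix $f_1 \in \Lone(\mu)$ with $\|f_1\|_{\Lone(\mu)} \le 1$ and $G \subset \R$ of finite positive $\mu$-measure. The first step is to set
$$
G' := G \setminus \left\{x : \Maximal^N f_1(x) > \Lambda/\mu(G)\right\},
$$
with $\Lambda > 0$ chosen large enough (depending only on the weak-type constant in Lemma \ref{ModMaxBounds}) so that $\mu(G) \le 2\mu(G')$; by construction, $\Maximal^N f_1(x) \le \Lambda/\mu(G)$ for every $x \in G'$.

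For any $f_2$ with $|f_2| \le \one_{G'}$, I would expand $\mathcal{C}_\Ss^N(|f_1|, |f_2|)$ via the enumeration $\{c_j\}_{j=1}^{N'}$ introduced before Lemma \ref{lem:SparseReduction}:
$$
\mathcal{C}_\Ss^N(|f_1|,|f_2|) = \sum_{j=1}^{N'} \sum_{\substack{I \in \Ss\\ c_j(I) \in \Ss}} \langle |f_1|\rangle_I \langle |f_2|\rangle_{c_j(I)} \sqrt{m(I)m(c_j(I))}.
$$
Only terms with $c_j(I) \cap G' \ne \emptyset$ contribute, since $f_2$ vanishes off $G'$. For such a term, choosing any $x_0 \in c_j(I) \cap G'$ and using the definition of $\Maximal^N$:
$$
\Coneb(I, c_j(I))\langle |f_1|\rangle_I \le \Maximal^N f_1(x_0) \le \Lambda/\mu(G),
$$
so $\langle |f_1|\rangle_I \sqrt{m(I)m(c_j(I))} \le \Lambda\mu(c_j(I))/\mu(G)$. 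Reindexing by $K = c_j(I)$ (an at most $N'$-to-$1$ map, as recorded in the paragraph before Lemma \ref{lem:SparseReduction}) and using $\int_K |f_2|\,d\mu \le \mu(K \cap G')$ yields
$$
\mathcal{C}_\Ss^N(|f_1|, |f_2|) \le \frac{\Lambda(N')^2}{\mu(G)} \sum_{K \in \Ss} \mu(K \cap G').
$$

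The main technical step is then the packing estimate $\sum_{K \in \Ss} \mu(K \cap G') \lesssim \mu(G)$, uniformly in $\Ss$ and $G'$. Invoking sparsity via $\mu(K) \le \eta^{-1}\mu(E_K)$ with $\{E_K\}_{K \in \Ss}$ pairwise disjoint gives
$$
\sum_K \mu(K \cap G') = \sum_K \mu(K)\langle\one_{G'}\rangle_K \le \eta^{-1} \int \Maximal\one_{G'}\cdot\one_{\bigsqcup E_K}\,d\mu \le \eta^{-1}\mu\bigl(\textstyle\bigsqcup E_K\bigr),
$$
since $\Maximal\one_{G'} \le 1$ pointwise. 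Bounding $\mu(\bigsqcup E_K)$ by $\mu(G)$, after restricting the sum to those $K \in \Ss$ actually meeting $G'$, is the principal obstacle; it is handled by a stopping-time decomposition on this subfamily in the spirit of the proof of Lemma \ref{ModMaxBounds}: selecting maximal $K$'s meeting $G'$, their total $\mu(E_K)$-mass is controlled by the weak-type $(1,1)$ of the standard maximal operator $\Maximal$ applied to $\one_G$, and iterating on the descendants of each selected interval yields geometric decay at each generation. Summing the resulting geometric series produces the required bound and parallels the classical weak-type $(1,1)$ proof for the sparse operator $\A_\Ss$.
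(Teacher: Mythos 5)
The exceptional-set construction at the start of your argument matches the paper's: both set $G' = G \setminus \{\Maximal^N f_1 > C_0/\mu(G)\}$ and use the weak-type bound from Lemma~\ref{ModMaxBounds} to verify $\mu(G) \leq 2\mu(G')$, and both then exploit that $\Coneb(I,c_j(I))\langle|f_1|\rangle_I \lesssim 1/\mu(G)$ whenever $c_j(I)$ meets $G'$. Up to that point you are on the right track.

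The gap is in the reduction
$$
\mathcal{C}_\Ss^N(|f_1|,|f_2|) \;\lesssim\; \frac{1}{\mu(G)} \sum_{K \in \Ss} \mu(K \cap G'),
$$
followed by the claimed packing estimate $\sum_{K \in \Ss} \mu(K \cap G') \lesssim \mu(G)$. The second inequality is simply false for general sparse families, even for Lebesgue measure: take $\Ss = \{[0,2^{-k}] : k \geq 0\}$ (which is $\tfrac12$-sparse with $E_K$ the right halves) and $G' = [0,\varepsilon]$ with $G \supset G'$ of measure $\leq 2\varepsilon$. Then $\sum_K \mu(K \cap G') = \sum_k \min(2^{-k},\varepsilon) \sim \varepsilon \log(1/\varepsilon)$, which is not $O(\mu(G)) = O(\varepsilon)$. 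What you are implicitly trying to prove is an $L^1 \to L^1$ bound for the sparse operator $\A_\Ss$ applied to $\one_{G'}$, and such a bound does not hold; only the weak-type $(1,1)$ bound does, which is why you cannot discard the average $\langle |f_2|\rangle_{c_j(I)}$ and replace $\int_K |f_2| \, d\mu$ by the measure $\mu(K \cap G')$. The ``stopping time'' and ``geometric decay'' appeal at the end is not justified: maximal selection of the $K$'s meeting $G'$ gives disjoint intervals but no smallness of $\sum \mu(K)$, and there is no mechanism forcing geometric decay at subsequent generations once you have thrown away everything but indicator functions.

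The paper avoids this by not linearizing in $L^1$. Instead it applies Cauchy-Schwarz keeping both averages $\langle f_1\rangle_I$ and $\langle f_2\rangle_{c_j(I)}$, passes from $\mu(J)$ to $\mu(E_J)$ by sparsity, and bounds the $f_2$-side by $\|\Maximal f_2\|_{L^2(\mu)}^2 \lesssim \mu(G)$. The $f_1$-side is packaged as $\|\widetilde f\|_{L^2(\mu)}^2$ for an auxiliary function $\widetilde f$ satisfying $|\widetilde f| \lesssim \Maximal^N f_1$ and $\|\widetilde f\|_{L^\infty} \lesssim 1/\mu(G)$; the distribution-function computation using the weak-type bound of $\Maximal^N$ then yields $\|\widetilde f\|_{L^2(\mu)}^2 \lesssim 1/\mu(G)$, which is exactly what closes the estimate. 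To repair your argument you would need to replace the $L^1$ packing step by this $L^2$ square-function argument (or something equivalent).
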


\begin{proof}
Let $G$ be an arbitrary Borel set, and $f_1$ with $\|f_1\|_{\Lone(\mu)}\leq 1$. We define the following set:
  $$H= \left\{x \in \mathbb{R}: \Maximal^N_{\D}f_1(x)> \frac{C_0}{\mu(G)}\right\}$$ 
 where we will choose the value of $C_0$ momentarily. By Lemma \ref{ModMaxBounds}, we get 
$$
\mu(H) \leq \|\Maximal^{N}_{\D}\|_{\Lone(\mu) \rightarrow \Lonew(\mu)} \frac{\mu(G)}{C_0} \leq \frac{\mu(G)}{2},
$$
after choosing $C_0 = 2 \|\Maximal^{N}_{\D}\|_{\Lone(\mu) \rightarrow \Lonew(\mu)}$.
Take $G'= G \setminus H.$ The above estimate shows $\mu(G) \leq 2 \mu(G')$, so $G'$ is a contender in the infimum. For any $f_2$ with $|f_2| \leq \one_{G'}.$, it is enough to prove 
$$
\sum_{\substack{I \in \Ss: \\ c_j(I) \in \Ss}} \langle f_1 \rangle_{I} \langle f_2 \rangle_{c_j(I)} \sqrt{m(I)} \sqrt{m(c_j(I))} \lesssim 1   
$$
for each $j \in \{1, \cdots, N'\}$. Let $I \in \Ss$ be such that $c_j(I) \in \Ss.$ If we have 
$$\frac{\sqrt{m(I) m(c_j(I))}}{\mu(c_j(I))} \langle f_1 \rangle_{I}> \frac{C_0}{\mu(G)},$$
then by definition $c_j(I) \subset H.$ But since $f_2$ is supported in $H^c,$ in this case we have $\langle f_2 \rangle_{c_j(I)}=0$.
Therefore, we may assume without loss of generality that if $I \in \Ss$ is such that $c_j(I) \in \Ss$, then we have the estimate
\begin{equation} \langle f_1 \rangle_{I} \leq \frac{\mu(c_j(I))}{\sqrt{m(I) m(c_j(I))}} \frac{C_0}{\mu(G)}. \label{AvgControl} \end{equation}
We then estimate, using Cauchy-Schwarz and the $\Ltwo$ bound for the ordinary maximal function:
\begin{align*}
\sum_{ \substack{ I \in \Ss: \\ c_j(I) \in \Ss}} \langle f_1 \rangle_{I} \langle f_2 \rangle_{c_j(I)} \sqrt{m(I) m(c_j(I))} & \lesssim \left(\sum_{ \substack{ I \in \Ss: \\ c_j(I) \in \Ss}} (\langle f_1 \rangle_{I})^2 m(I) \left(\frac{m(c_j(I))}{\mu(c_j(I))} \right) \right)^{1/2}   \left(\sum_{J \in \Ss} (\langle f_2 \rangle_{J})^2 \mu(J)  \right)^{1/2}\\
& \lesssim \left(\sum_{ \substack{ I \in \Ss: \\ c_j(I) \in \Ss}} (\langle f_1 \rangle_{I})^2 \left(\frac{m(I) m(c_j(I))}{\mu(c_j(I))^2}\right)  \mu(c_j(I)) \right)^{1/2} \left(\sum_{J \in \Ss} (\langle f_2 \rangle_{J})^2 \mu(E_J)  \right)^{1/2} \\
& \lesssim \left(\sum_{ \substack{ I \in \Ss: \\ c_j(I) \in \Ss}} (\langle f_1 \rangle_{I})^2 \left(\frac{m(I) m(c_j(I))}{\mu(c_j(I))^2}\right)  \mu(E_{c_j(I)})  \right)^{1/2} \mu(G)^{1/2}.
\end{align*}
We will now show $$\left(\sum_{ \substack{ I \in \Ss: \\ c_j(I) \in \Ss}} (\langle f_1 \rangle_{I})^2 \left(\frac{m(I) m(c_j(I))}{\mu(c_j(I))^2}\right)  \mu(E_{c_j(I)})  \right)^{1/2} \lesssim \mu(G)^{-1/2},$$
which is enough to conclude. Let $$\widetilde{f}= \sum_{ \substack{ I \in \Ss: \\ c_j(I) \in \Ss}} \Coneb(I,c_j(I)) \, \langle f_1 \rangle_{I} \one_{E_{c_j(I)}},$$ 
and notice that we have 
$$\|\widetilde{f}\|_{\Ltwo(\mu)} \sim    \left(\sum_{ \substack{ I \in \Ss: \\ c_j(I) \in \Ss}} (\langle f_1 \rangle_{I})^2 \left(\frac{m(I) m(c_j(I))}{\mu(c_j(I))^2}\right)  \mu(E_{c_j(I)})  \right)^{1/2}.
$$
Moreover, we have the $L^\infty$ estimate, using \eqref{AvgControl}:
$$
\| \widetilde{f}\|_{L^\infty(\mu)} \lesssim \frac{C_0}{\mu(G)}.
$$
Also notice that for almost every $x \in \mathbb{R}$, we have
$$  |\widetilde{f}(x)| \lesssim \Maximal ^N f_1(x).$$
Therefore, we can write using the distribution function, the weak-type estimate for $\Maximal ^N$ again, and the $\Lone$ normalization of $f_1$:  \begin{align*}\| \widetilde{f} \|_{\Ltwo(\mu )}^2 & = 2\int_{0}^\infty \lambda \mu(\{|\widetilde{f}|> \lambda\}) \, d \lambda \\
& \lesssim \int_{0}^{\frac{C}{\mu(G)}} \lambda \mu(\{ x: \Maximal^N_{\D} f_1(x)> \lambda     \}) \, d \lambda.\\
& \lesssim \|f_1\|_{\Lone(\mu)} \int_{0}^{\frac{C'}{\mu(G)}} 1 \, d \lambda\\
& \lesssim \frac{\|f_1\|_{\Lone(\mu)}}{\mu(G)}\leq \frac{1}{\mu(G)},
\end{align*}
proving the required estimate.
\end{proof}

\begin{rem}
As a consequence of Lemma \ref{lem:LpBounds}, Proposition \ref{prop:WeakType} and the corresponding ones for $\mathcal{A}_\Ss$, we recover the known results for the $\Lp$ bounds and weak-type estimates for $\Sha$ and other Haar shifts (see \cite{LMP2014}). 
\end{rem}

\begin{rem}
We leave as an interesting open question whether pointwise sparse estimates can be obtained, instead of the bilinear ones proved in this section.
\end{rem}

\section{Weighted Estimates} \label{sec:section3}

 We now consider weights $w$ on $\R$ with respect to a fixed balanced measure $\mu$. In what follows, we write 
 $$
 \langle f \rangle_{I}^w=\frac{1}{w(I)}\int_{I} f(x) w(x) \, d\mu(x),
 $$ 
 and $\Maximal^{w}$ to indicate the dyadic Hardy-Littlewood maximal operator with respect to the measure $w d\mu$. We now turn to  natural question of weighted $\Lp$ estimates for operators $T \in \HS(s,t)$. We first define the appropriate weight classes and quantities which reflect the complexity of the dyadic shift. 
 
\subsection{A new class of weights} For $p \in (1,\infty)$, we say $w \in \ApBalanced(\mu)$ if 
$$
[w]_{\ApBalanced(\mu)}:= \sup_{\substack{I, J \in \D:\\ J \in \{I, I_{-}^b, I_{+}^b\} \text{ or }\\ I \in \{J_{-}^b,J_{+}^b\} }} \Cpb(I,J) \langle w \rangle_{I} \langle w^{1-p'} \rangle_{J}^{p-1} < \infty. 
$$
If $p=1$, we say $w \in \AoneBalanced(\mu)$ if 
$$
[w]_{\AoneBalanced(\mu)}:=  \sup_{\substack{I, J \in \D:\\ J \in \{I, I_{-}^b, I_{+}^b\} \text{ or }\\ I \in \{J_{-}^b,J_{+}^b\}}} \Coneb(I,J)  \langle w \rangle_{I} \|w^{-1}\|_{L^\infty(J)}\leq C. 
$$
\begin{remark}
If $w \in \ApBalanced(\mu)$, $[w]_{A_p(\mu)} \leq [w]_{\ApBalanced(\mu)}.$ If $\mu$ is dyadically doubling, $A_p(\mu) = \ApBalanced(\mu)$. Proposition \ref{BadWeight} shows that, for general balanced measures, the inclusion $\ApBalanced(\mu) \subset A_p(\mu)$ is proper. As one would expect, we have the duality relationship $w \in \ApBalanced(\mu)$ if and only if $w^{1-p'} \in \ApprimeBalanced(\mu).$
\end{remark}

We are going to use a different quantitative characterization of the classes $\ApBalanced(\mu)$ that will be useful to study the boundedness of $\Maximal^N$. If $p \in (1,\infty)$ and $N \in \N$, we set 
$$
[w]_{A_p^N(\mu)}:= \sup_{\substack{I, J \in \D:\\ 0 \leq \dyad(J,K) \leq N+2}} \Cpb(I,J) \left(\langle w \rangle_{I} \langle w^{1-p'} \rangle_{J}^{p-1}\right). 
$$
For $p=1$, we set 
$$
[w]_{A_1^N(\mu)}:= \sup_{\substack{I, J \in \D:\\ \dyad(I,J)\leq N+2}} \Coneb(I,J) \langle w \rangle_{I} \|w^{-1}\|_{L^\infty(J)}.
$$
\begin{rem}
$[w]_{A_1^N(\mu)}<\infty$ if and only if there exists $C$ such that   
$$
\Maximal^N w(x) \leq C  w(x). 
$$
Moreover, $[w]_{A_1^N(\mu)}$ is equal to the infimum of all such $C$.
\end{rem}

\begin{lem} \label{lem:ApbvsApN}
Let $\mu$ be balanced. For $N \in\N$ and $1 \leq p< \infty$, 
$$
[w]_{\ApBalanced(\mu)} \lesssim [w]_{A_p^N(\mu)} \lesssim_N \left([w]_{\ApBalanced(\mu)}\right)^{2^{N-1}}.
$$
In particular, $ [w]_{A_p^N(\mu)} <\infty$ exactly when $[w]_{\ApBalanced(\mu)}<\infty$.
\end{lem}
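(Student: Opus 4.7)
The first inequality $[w]_{\ApBalanced(\mu)} \lesssim [w]_{A_p^N(\mu)}$ is a direct comparison of suprema: the pairs $(I,J)$ over which the $\ApBalanced$ norm is taken are all within bounded dyadic distance (at most three), so they are contenders in the larger supremum defining $[w]_{A_p^N(\mu)}$ with the same $\Cpb$-factor. Hence the first inequality is essentially immediate for every $N \geq 1$.

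For the reverse inequality, which is the substantive part, I would proceed by a chaining and telescoping argument. Introduce the shorthand
$$
\phi(L) := \frac{m(L)^{p/2}}{\mu(L)^{p-1}} \langle w\rangle_L, \qquad \psi(L) := \frac{m(L)^{p/2}}{\mu(L)} \langle w^{1-p'}\rangle_L^{p-1},
$$
so that $\Cpb(I,J)\, \langle w\rangle_I \langle w^{1-p'}\rangle_J^{p-1} = \phi(I)\psi(J)$ whenever $I\neq J$. Given a pair $(I,J)$ with $\dyad(I,J)\leq N+2$ and common ancestor $K = I^{(s)} = J^{(t)}$ with $s+t\leq N+2$, I would construct an intermediate chain $I = L_0, L_1, \ldots, L_n = J$ obtained by walking up from $I$ to $K$ and back down to $J$, and telescope
$$
\phi(I)\psi(J) = \phi(L_0)\psi(L_1) \cdot \prod_{k=1}^{n-1} \frac{\phi(L_k)\psi(L_{k+1})}{\phi(L_k)\psi(L_k)},
$$
where each numerator factor is at most $[w]_{\ApBalanced(\mu)}$ (consecutive pairs being admissible for $\ApBalanced$) and the denominator factors are bounded below by combining the balanced relation $m(L)\sim m(\widehat L)$ with the standard Hölder lower bound $\langle w\rangle_{L_k}\langle w^{1-p'}\rangle_{L_k}^{p-1}\geq 1$.

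To reach the stated exponent $2^{N-1}$, the chaining is best organized inductively: split a pair at dyadic distance $\leq N+2$ into two pairs at distance $\leq N+1$ through a carefully chosen intermediate interval, apply a Cauchy--Schwarz/Hölder splitting to the resulting products, and deduce a squaring recursion of the form $[w]_{A_p^{N}(\mu)} \lesssim_N [w]_{A_p^{N-1}(\mu)}^2$, which iterates down to a base case handled directly by the $\ApBalanced$ condition. The case $p=1$ requires only minor modifications, replacing the average $\langle w^{1-p'}\rangle_J^{p-1}$ by the pointwise quantity $\|w^{-1}\|_{L^\infty(J)}$.

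The main obstacle lies in controlling the $\Cpb$ factor along the chain. Because $\mu$ is only balanced and not doubling, ratios like $\mu(K)/m(K)$ at intermediate ancestors can be arbitrarily large, so a naive telescoping that ignores these factors breaks down. The balanced hypothesis $m(L)\sim m(\widehat L)$ must be used repeatedly to propagate the $m$-factors stably across the chain, so that the intermediate quantities $\phi(L_k)\psi(L_{k+1})$ remain comparable to $\Cpb(I,J)\langle w\rangle_I \langle w^{1-p'}\rangle_J^{p-1}$ up to constants depending only on $N$; this ensures every step of the recursion is estimated uniformly in the depth of the chain.
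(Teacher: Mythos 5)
Your overall strategy matches the paper's: prove the first inequality by inclusion of suprema, and prove the second by an induction on $N$ in which a pair at distance $N+3$ is split across a single intermediate interval and Hölder's inequality inserts $\langle w\rangle_L\langle w^{1-p'}\rangle_L^{p-1}\geq 1$, yielding the recursion $[w]_{A_p^{N+1}(\mu)}\lesssim [w]_{A_p^{N}(\mu)}^2$ and hence the $2^{N-1}$ exponent. So the skeleton is correct.

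The gap is exactly the ``main obstacle'' you flag at the end, and you do not resolve it. The Hölder insertion at an intermediate interval $L$ multiplies the $\Cpb$-factors so that you pick up an extra factor $\bigl(m(L)/\mu(L)\bigr)^{p}$. For a balanced but non-doubling $\mu$, this can be arbitrarily small, so the two products $\Cpb(I,L)\langle w\rangle_I\langle w^{1-p'}\rangle_L^{p-1}$ and $\Cpb(L,J)\langle w\rangle_L\langle w^{1-p'}\rangle_J^{p-1}$ are \emph{not} individually controlled by the left-hand side unless you arrange $m(L)\sim\mu(L)$. The paper's proof deals with this by a case reduction that your proposal omits: one first reduces to the case $\mu(I)<\tfrac12\mu(\widehat I)$ and $\mu(J)<\tfrac12\mu(\widehat J)$ (else replace $I$ by $\widehat I$ and note $\dyad(\widehat I,J)$ has dropped, finishing by the inductive hypothesis), which by balancedness forces $m(I)\sim\mu(I)$ and $m(J)\sim\mu(J)$; then chooses the intermediate interval to be $\widehat I^{\,b}$ (after checking that $I^{(2)}$ or $J^{(2)}$ lies inside the minimal common ancestor), and again splits on whether $\mu(\widehat I^{\,b})\geq\tfrac12\mu(I^{(2)})$ to guarantee $m(\widehat I^{\,b})\sim\mu(\widehat I^{\,b})$ in the remaining case. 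Without identifying this specific intermediate interval and carrying out this doubling-versus-nondoubling dichotomy, the Cauchy--Schwarz/Hölder split you describe does not reproduce $\Cpb(I,J)$ up to constants. (A minor additional inaccuracy: in your telescoping heuristic, parent--child pairs are at dyadic distance $1$, which is \emph{not} among the pairs appearing in the definition of $[w]_{\ApBalanced(\mu)}$ — those are at distance $0$ or $3$ — so ``consecutive pairs being admissible for $\ApBalanced$'' is not literally correct; this is harmless since you abandon that route, but worth noting.)
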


\begin{proof}
We use induction on $N$. For the base case $N=1$, it is obvious that $[w]_{\ApBalanced} \lesssim [w]_{A_p^1(\mu)}.$
Conversely, if $w \in \ApBalanced(\mu)$ and $I \subset J$, for $p>1$ we have
\begin{align*}
\left( \frac{m(I)^{p/2} m(J)^{p/2}}{\mu(J) \mu(I)^{p-1}}\right) \langle w \rangle_{I} \langle w^{1-p'} \rangle_{J}^{p-1} & \leq \left( \frac{m(I)^{p/2} m(J)^{p/2}}{\mu(I)^{p}}\right) \langle w \rangle_{J} \langle w^{1-p'} \rangle_{J}^{p-1}\\
& \lesssim [w]_{\ApBalanced},
\end{align*}
 An entirely analogous argument works when $J \subset I$. The only other case to consider is when $J=I^b$, in which case we have the control:
$$  \left( \frac{m(I)^{p/2} m(I^b)^{p/2}}{\mu(I^b) \mu(I)^{p-1}}\right) \langle w \rangle_{I} \langle w^{1-p'} \rangle_{I^b}^{p-1} \lesssim       \langle w \rangle_{\widehat{I}} \langle w^{1-p'} \rangle_{\widehat{I}}^{p-1} .$$ The case $p=1$ can be dealt with with straightforward modifications. This completes the proof of the base case. 

Now, assume that the statement holds for some positive integer $j$. The inequality 
$$
[w]_{A_p^j(\mu)} \leq [w]_{A_p^{j+1}(\mu)}
$$ 
is immediate. Additionally, it is enough to consider disjoint intervals $I,J$ when bounding $[w]_{A_p^{j+1}(\mu)}$, so suppose $I,J$ are disjoint with $\dyad(I,J)=j+3.$ We may assume $\mu(I)< \frac{1}{2} \mu(\widehat{I})$ and $\mu(J)< \frac{1}{2} \mu(\widehat{J})$. For example, if $\mu(I)\geq \frac{1}{2} \mu(\widehat{I})$, then in fact $\mu(I) \sim \mu(\widehat{I})$ and we would have the control

\begin{equation}
\left( \frac{m(I)^{p/2} m(J)^{p/2}}{\mu(J) \mu(I)^{p-1}}\right) \langle w \rangle_{I} \langle w^{1-p'} \rangle_{J}^{p-1} \lesssim \left( \frac{m(\widehat{I})^{p/2} m(J)^{p/2}}{\mu(J) \mu(\widehat{I})^{p-1}}\right) \langle w \rangle_{\widehat{I}} \langle w^{1-p'} \rangle_{J}^{p-1} \leq [w]_{A_p^j(\mu)},  \label{DoublingReduct}  
\end{equation}
since $\dyad(\widehat{I},J)=j+2.$ Therefore, under this assumption we have $m(I) \sim \mu(I)$ and $m(J) \sim \mu(J)$, since $m(\widehat{I}) \sim \min \{\mu(I), \mu(I^b)\}$ for any dyadic interval $I$ and $\mu$ is balanced. Let $K$ denote the minimal common ancestor of $I$ and $J$. It is not difficult to see that either $I^{(2)} \subset K$ or $J^{(2)} \subset K$. Assume without loss of generality that $I^{(2)} \subset K.$ We consider two cases. If $\mu(\widehat{I}^b) \geq \frac{1}{2} \mu(I^{(2)})$, then we have
$$ \mu(\widehat{I}) \sim m(I^{(2)}) \sim m(I) \sim \mu(I),$$
and we can then use the doubling argument as in \eqref{DoublingReduct}. Otherwise, $\mu(\widehat{I}^b) \sim m(\widehat{I}^b)$ and we may estimate by H\"{o}lder's inequality:

\begin{align*}
\left( \frac{m(I)^{p/2} m(J)^{p/2}}{\mu(J) \mu(I)^{p-1}}\right) \langle w \rangle_{I} \langle w^{1-p'} \rangle_{J}^{p-1} & \leq \left( \frac{m(I)^{p/2} m(J)^{p/2}}{\mu(J) \mu(I)^{p-1}}\right) \langle w \rangle_{I} \langle w^{1-p'} \rangle_{\widehat{I}^b}^{p-1} \langle w \rangle_{\widehat{I}^b}\langle w^{1-p'} \rangle_{J}^{p-1} \\
& \sim \left[  \left( \frac{m(I)^{p/2} m(\widehat{I}^b)^{p/2}}{\mu(\widehat{I}^b) \mu(I)^{p-1}}\right) \langle w \rangle_{I} \langle w^{1-p'} \rangle_{\widehat{I}^b}^{p-1} \right]\\
& \times \left[  \left( \frac{m(\widehat{I}^b)^{p/2} m(J)^{p/2}}{\mu(J) \mu(\widehat{I}^b)^{p-1}}\right) \langle w \rangle_{\widehat{I}^b} \langle w^{1-p'} \rangle_{J}^{p-1} \right]\\
& \leq [w]_{A_p^j(\mu)}^2,
\end{align*}
since $\dyad(I, \widehat{I}^b)=3 \leq j+2$ and $\dyad(\widehat{I}^b,J)=j+2$. This completes the induction.
\end{proof}
    
\begin{rem}
Regardless of the sharpness of the inequalities in the statement of Lemma \ref{lem:ApbvsApN}, it is probably not the case that $[w]_{A_p^j(\mu)} \sim [w]_{A_p^k(\mu)}$ with implicit constants independent of $w,j,k$. More compelling evidence to support the fact that the characteristics are not quantitatively equivalent comes from the sparse domination in Theorems \ref{th:ModifiedSparse} and \ref{MaximalSparse}. The sparse domination depends in an essential way on the complexity. Remark \ref{SparseDiffComplex} shows that, in general, a complexity one sparse form $\mathcal{C}_{\Ss}^1$ cannot possibly dominate a dyadic shift with complexity $2$, and so on for higher complexities. Therefore, obtaining sharp quantitative weighted estimates for dyadic shift operators defined with respect to balanced measures remains an open problem, but one would expect the quantitative bounds to depend on the complexity of the operator. 
\end{rem}

The proof of our next result justifies the introduction of the quantities $[w]_{A_p^N(\mu)}$.

\begin{prop}\label{MaximalChar} Let $p \in (1, \infty)$ and $N \in \N.$ The operator $\Maximal^N$ is bounded on $\Lp(w d\mu)$ if and only if $w \in \ApBalanced(\mu)$. $\Maximal^N$ is bounded from $\Lone(wd \mu)$ to $\Lonew(w d\mu)$ if and only if $w \in \AoneBalanced(\mu).$ 
\end{prop}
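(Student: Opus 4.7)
The proof splits into necessity and sufficiency, and within each into the $p>1$ and $p=1$ cases. The core tool is the equivalence $\ApBalanced(\mu) \simeq A_p^N(\mu)$ from Lemma \ref{lem:ApbvsApN}, which lets us test the complexity-dependent class $A_p^N$ in place of $\ApBalanced$.

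\textbf{Necessity.} For $p>1$, test $\Maximal^N$ on $f=w^{1-p'}\one_I$. Since $\Maximal^N f \geq \Coneb(I,J)\langle w^{1-p'}\rangle_I\one_J$ for every $J$ with $\dyad(I,J)\leq N+2$, the hypothesized $L^p(wd\mu)$ bound yields $\Coneb(I,J)^p\langle w^{1-p'}\rangle_I^p \, w(J) \lesssim w^{1-p'}(I)$, which rearranges exactly to $\Cpb(J,I)\langle w\rangle_J\langle w^{1-p'}\rangle_I^{p-1}\lesssim 1$, i.e., the $A_p^N$ condition on the ordered pair $(J,I)$. Supremizing and applying Lemma \ref{lem:ApbvsApN} gives $w\in\ApBalanced(\mu)$. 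For $p=1$, test instead with $f=\one_E$, $E\subset I$: the weak-type inequality reads $w(J)\Coneb(I,J)\mu(E)/\mu(I) \lesssim w(E)$, and taking the infimum over $E\subset I$ produces the $A_1^N$ condition on $(J,I)$.

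\textbf{Sufficiency for $p=1$.} Retrace the proof of Lemma \ref{ModMaxBounds}, replacing $\mu(J_j)$ by $w(J_j)$ throughout. The three ingredients that combine are: the stopping condition, which controls $\mu(J_j)$ by $\sqrt{m(I_j)m(J_j)}\int_{I_j}f\,d\mu/(\lambda\mu(I_j))$; the $\AoneBalanced$ condition on the pair $(J_j,I_j)$, which controls $\langle w\rangle_{J_j}$ by $[w]_{\AoneBalanced}(\Coneb(J_j,I_j)\|w^{-1}\|_{L^\infty(I_j)})^{-1}$; and the elementary bound $\int_{I_j}f\,d\mu\leq\|w^{-1}\|_{L^\infty(I_j)}\int_{I_j}fw\,d\mu$. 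Their product makes all geometric factors cancel, yielding $w(J_j)\lesssim \lambda^{-1}\int_{I_j}fw\,d\mu$; the finite-overlap argument from Lemma \ref{ModMaxBounds} then completes the summation.

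\textbf{Sufficiency for $p>1$.} Invoke Theorem \ref{MaximalSparse} to reduce to two bilinear estimates on $L^p(wd\mu)\times L^{p'}(w^{1-p'}d\mu)$, one for $\mathcal{A}_\Ss$ and one for $\mathcal{C}_\Ss^N$. The first is the classical sparse $A_p$ bound, which uses only $\ApBalanced\subseteq A_p$. For the second, H\"older on each interval gives
\[\langle f\rangle_I\langle g\rangle_K\sqrt{m(I)m(K)}\leq\frac{\langle w\rangle_K^{1/p}\langle w^{1-p'}\rangle_I^{1/p'}\sqrt{m(I)m(K)}}{\mu(I)^{1/p}\mu(K)^{1/p'}}\|f\|_{L^p(I,wd\mu)}\|g\|_{L^{p'}(K,w^{1-p'}d\mu)},\]
and the $A_p^N$ condition on the pair $(K,I)$ shows that the coefficient is at most $[w]_{A_p^N}^{1/p}$. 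The residual sum over $(I,K)$ with $\dyad(I,K)\leq N+2$ is handled by Cauchy-Schwarz in the finitely many index slots $c_j$ (as defined in Section \ref{sec:section2}), the finite overlap these provide, and the weighted Carleson embedding for $\mu$-sparse $\Ss$, which applies because $w\in A_p(\mu)\subseteq A_\infty(\mu)$. The main obstacle is the $\mathcal{C}_\Ss^N$ estimate: exhibiting precisely the $A_p^N$ characteristic in the residual coefficient requires careful algebraic cancellation, and transferring $\mu$-sparsity to weighted Carleson embeddings relies on the $A_\infty$ inheritance from balanced $A_p$, both of which need to be done uniformly in the complexity $N$.
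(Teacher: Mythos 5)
Your necessity arguments (for both $p>1$ and $p=1$) and your $p=1$ sufficiency all match the paper's proof in spirit and in substance. For $p>1$ you test with $f=w^{1-p'}\one_I$, exactly the paper's test function with the roles of $I$ and $J$ swapped in notation, and you recover the $A_p^N$ condition. For $p=1$ necessity you test with an indicator of a near-infimum subset of $I$, which is Muckenhoupt's original scheme and is the paper's argument. For $p=1$ sufficiency you retrace Lemma~\ref{ModMaxBounds}, and your cancellation (stopping condition, $A_1^N$ control of $\langle w\rangle_{J_j}$, and $\int_{I_j}f\,d\mu\leq\|w^{-1}\|_{L^\infty(I_j)}\int_{I_j}fw\,d\mu$) does produce $w(J_j)\lesssim\lambda^{-1}\int_{I_j}fw\,d\mu$; this is the same content as the paper's use of $\Coneb(J_j,I_j)\langle w\rangle_{J_j}\leq\Maximal^Nw$ on $I_j$ followed by $\Maximal^Nw\leq[w]_{A_1^N}w$, dressed slightly differently. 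Minor: you write $[w]_{\AoneBalanced}$ where the pair $(J_j,I_j)$ has dyadic distance up to $N+2$, so the correct characteristic is $[w]_{A_1^N}$; you need Lemma~\ref{lem:ApbvsApN} to bridge this, which you cite at the outset.

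The sufficiency argument for $p>1$ has a genuine gap. After applying H\"older on each pair $(I,K)$ and absorbing the coefficient into $[w]_{A_p^N}^{1/p}$ (which part is correct), you are left with
\[
[w]_{A_p^N}^{1/p}\sum_{I\in\Ss}\|f\|_{L^p(I,wd\mu)}\,\|g\|_{L^{p'}(c_j(I),w^{1-p'}d\mu)}
\lesssim [w]_{A_p^N}^{1/p}\Bigl(\sum_{I\in\Ss}\int_I f^p\,w\,d\mu\Bigr)^{1/p}\Bigl(\sum_{I\in\Ss}\int_{c_j(I)} g^{p'}\,w^{1-p'}\,d\mu\Bigr)^{1/p'}.
\]
Neither factor is controlled by the corresponding global norm: for instance, if $\Ss$ is a nested chain $\{[0,2^{-k}]\}_{k\ge 0}$ (which is sparse) and $f=\one_{[0,\varepsilon]}$, then $\sum_{I\in\Ss}\int_I f^p w\,d\mu\gtrsim\log(1/\varepsilon)\cdot\|f\|_{L^p(wd\mu)}^p$. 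This is not fixable by passing from $\mu$-sparsity to $w$-sparsity via $A_\infty$: the quantity you produced is $\sum_I w(I)\langle f^p\rangle_I^w$, which corresponds to the \emph{endpoint} $q=1$ of the weighted Carleson embedding, and the embedding fails at that endpoint (it would imply an $L^1$ bound for a maximal function). The paper's Proposition~\ref{prop:estimateCNS} avoids H\"older entirely: it factors the averages with respect to dual weights, $\langle f\rangle_I=\langle fw\rangle_I^{w^{-1}}\langle w^{-1}\rangle_I$ and $\langle wg\rangle_K=\langle g\rangle_K^{w}\langle w\rangle_K$, absorbs $\Cpb$ into the $A_p^N$ characteristic, applies Cauchy--Schwarz in $I$, passes to $w$- and $w^{-1}$-sparse sets via Lemma~\ref{FairDivision}, and then dominates each factor by $\Maximal^w$ or $\Maximal^{w^{-1}}$, whose $L^{p}$-boundedness holds with universal constants. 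So the correct fix for your $p>1$ argument is to replace the H\"older-plus-Carleson step with exactly this dual-average scheme (or simply cite Corollary~\ref{WeightedBounds}, which the paper obtains independently of $\Maximal^N$, so there is no circularity).
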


\begin{proof}
We apply Lemma \ref{lem:ApbvsApN} to use the quantity $[w]_{A_p^N(\mu)}$ instead of $[w]_{\ApBalanced(\mu)}$. To begin with, fix $1<p<\infty$ and suppose that $\|\Maximal^N\|_{\Lp(w d\mu) \rightarrow \Lp(wd\mu)}=C.$ Let $I \in \D$ be arbitrary. Suppose that $w^{1-p'}$ is locally integrable (otherwise, consider $(w+\varepsilon)^{1-p'}$ and use a limiting argument). Let $J$ be any interval satisfying $0 \leq \operatorname{dist}(I,J)\leq N+2$ and take $f= w^{1-p'} \one_J.$ A computation shows that
$$
\|f\|_{\Lp(wd \mu)}= \left( \int_{J} w^{1-p'} \, d \mu\right)^{1/p}.
$$
Using the above and our hypothesis  we estimate:
\begin{align*}
C^p \left( \int_{J} w^{1-p'} \, d \mu \right)& \geq \|\Maximal^Nf\|_{\Lp(w d\mu)}^p\\
& \geq [\Coneb(J,I)]^p \int_{I} \left( \langle w^{1-p'} \rangle_J\right)^p w d\mu \\
& = \Cpb(I,J) \frac{\mu(J)}{\mu(I)} \int_{I} \left( \langle w^{1-p'} \rangle_J\right)^p w d\mu.
\end{align*}
Simple rearrangement then yields 
$$  
\Cpb(I,J) \langle w \rangle_{I} \langle w^{1-p'} \rangle_{J}^{p-1} \leq C^p,     
$$
which means $[w]_{A_p^N(\mu)} < \infty$, since $I,J$ were arbitrary. We prove the necessity when $p=1$ with an argument similar to \cite{M1972}. Put now
$$
\|\Maximal^N\|_{\Lone(w d\mu) \rightarrow \Lonew(w d\mu)} =C,
$$ 
and let $I, J \in \D$ satisfy $\dyad(I,J) \leq N+2.$ Let $\varepsilon>0$ be arbitrary and let $$A=\{x \in J: w(x)< \operatorname{ess \, inf}_{J}w+\varepsilon\}.$$
Note that $\mu(A)>0.$ Take $f=\one_{A}.$ Then, by definition we have
$$ I \subset \left\{x: \Maximal^Nf(x) \geq \frac{\mu(A)}{\mu(J)} \Coneb(J,I) \right \}.$$
It then follows from the supposed weak-type estimate that
\begin{align*}
w(I) & \leq C \frac{\mu(J)}{\Coneb(J,I)\mu(A)} \int_{A} w(x) d \mu(x)\\
& \leq C \frac{\mu(J)}{\Coneb(J,I)} (\operatorname{ess \, inf}_{J}w+\varepsilon).
\end{align*}
Since $\varepsilon$ was arbitrary and $C$ is independent of $\varepsilon$, simple rearrangment yields
$$ \Coneb(I,J) \langle w \rangle_{I} \leq C  (\operatorname{ess \, inf}_{J}w),    $$
which is precisely what we wanted to show. 

We next turn to the sufficiency of the condition $[w]_{A_p^N(\mu)} < \infty$. For $p>1$, it is immediately given by a weighted bound for the sparse form, which we postpone to the proof of Corollary \ref{WeightedBounds}, which is obtained independently from the maximal function, together with Theorem \ref{MaximalSparse}. When $p=1$, we argue like in Lemma \ref{ModMaxBounds}. Suppose $w \in \AoneBalanced(\mu)$ and take $f \in \Lone(w d \mu).$ Put $E_\lambda=\{x: \Maximal^Nf(x)>\lambda\}$, cover $E_\lambda$ by disjoint maximal intervals $\{J_j\}$ with corresponding averaging intervals $\{I_j\}$. Select the maximal dyadic intervals from $\{I_j\}$, label them $\{I_{j_k}\}$, and define the index sets $A_k$, $B_k$ as in the proof of Lemma \ref{ModMaxBounds}. We may write
 \begin{align*}
 w(E_\lambda) & \leq \sum_{j=1}^{\infty} \langle w \rangle_{J_j} \mu(J_j)\\
 & = \sum_{k=1}^{\infty} \sum_{j \in A_k \setminus B_k} \langle w \rangle_{J_j} \mu(J_j) +\sum_{k=1}^{\infty} \sum_{j \in B_k} \langle w \rangle_{J_j} \mu(J_{j}) \\
 & := \mathrm{(I)} + \mathrm{(II)}. 
\end{align*}
We control $\mathrm{(I)}$ as follows, noticing that $\Coneb(J_j,I_j) \langle w \rangle_{J_j} \leq \Maximal^Nw(x) \one_{I_j}(x)$ if $x \in I_j$:
\begin{align*}
\sum_{k=1}^{\infty} \sum_{j \in A_k \setminus B_k} \langle w \rangle_{J_j} \mu(J_j) & \lesssim \frac{1}{\lambda} \sum_{k=1}^{\infty} \sum_{j \in A_k \setminus B_k}  \langle w \rangle_{J_j} \Coneb(J_j,I_j)\int_{I_j} f \, d\mu\\
& \leq\frac{1}{\lambda }\sum_{k=1}^{\infty} \sum_{j \in A_k \setminus B_k} \int_{I_j} f \cdot \Maximal^N w \, d\mu\\
& \leq \frac{(C_N+1)[w]_{A_1^N(\mu)}}{\lambda} \sum_{k=1}^{\infty} \int_{I_{j_k}} f \, wd\mu \\
& \leq \frac{(C_N+1)[w]_{A_1^N(\mu)}}{\lambda} \|f\|_{\Lone(w d\mu)}.
\end{align*}
For $\mathrm{(II)}$, we can assume without loss of generality $J_j \subsetneq I_{j_k}$ for all $j \in B_k$. We then have
\begin{align*}
\sum_{k=1}^{\infty} \sum_{j \in B_k} w(J_{j}) & \leq  \sum_{k=1}^{\infty} \langle w \rangle_{I_{j_k}} \mu (I_{j_k})\\
& \lesssim \frac{1}{\lambda} \sum_{k=1}^{\infty} \langle w \rangle_{I_{j_k}}\int_{I_{j_k}} f \, d\mu \\
& \leq \frac{1}{\lambda} \sum_{k=1}^{\infty} \int_{I_{j_k}} f (\Maximal w)\, d\mu \\
& \leq \frac{[w]_{A_1^N(\mu)}}{\lambda} \sum_{k=1}^{\infty} \int_{I_{j_k}} f w d\mu \\
& \lesssim \frac{[w]_{A_1^N(\mu)}}{\lambda} \|f\|_{\Lone(w d\mu)}.
\end{align*}
\end{proof}    

\subsection{Haar Shifts} We now turn to weighted estimates for Haar shift operators. For convenience in what follows, we prove all weighted estimates for $p=2.$ However, all of the proofs carry over to the other values of $p$ with minor modifications. We first record the following standard lemma, which is an analog of an $A_\infty$ property we will need in the sequel. 

\begin{lem} \label{FairDivision} Let $w \in A_p(\mu).$ Then for any $I \in \D$ and $E_I \subset I$ with $\mu(E_I) \geq \eta (\mu(I)) $ for some $\eta \in (0,1)$, 
$$
w(E_I) \geq \frac{\eta^p}{[w]_{A_p(\mu)}} w(I).
$$
\end{lem}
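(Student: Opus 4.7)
The plan is to obtain the bound via Hölder's inequality paired directly with the definition of $[w]_{A_p(\mu)}$. This is a classical $A_\infty$-type self-improvement observation: once a set $E_I$ occupies a fixed proportion of $I$ in $\mu$-measure, it must occupy a comparable proportion in $w\, d\mu$-measure as well, with the loss quantified by $[w]_{A_p(\mu)}$.

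First, I would write $\mu(E_I) = \int_{E_I} w^{1/p}\, w^{-1/p} \, d\mu$ and apply Hölder's inequality with exponents $p$ and $p'$. Using $p'/p = p'-1$, i.e.\ $w^{-p'/p}=w^{1-p'}$, this yields
$$
\mu(E_I)^p \leq w(E_I) \left( \int_{E_I} w^{1-p'}\, d\mu \right)^{p-1}.
$$
Enlarging the second integral to all of $I$ (which only increases the right-hand side) gives
$$
\mu(E_I)^p \leq w(E_I) \left( \int_{I} w^{1-p'}\, d\mu \right)^{p-1}.
$$

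Next I would invoke the $A_p(\mu)$ condition in the form
$$
\left( \int_I w^{1-p'}\, d\mu \right)^{p-1} \leq [w]_{A_p(\mu)} \, \frac{\mu(I)^p}{w(I)},
$$
which is merely a rearrangement of $\langle w\rangle_I \langle w^{1-p'}\rangle_I^{p-1} \leq [w]_{A_p(\mu)}$. Combining the two displays and using the hypothesis $\mu(E_I) \geq \eta\, \mu(I)$ yields
$$
\eta^p \mu(I)^p \leq \mu(E_I)^p \leq w(E_I) \, [w]_{A_p(\mu)} \, \frac{\mu(I)^p}{w(I)},
$$
and dividing through by $\mu(I)^p \, [w]_{A_p(\mu)} / w(I)$ gives the claimed bound.

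The argument is almost entirely formal and there is no substantive obstacle; the only point requiring any attention is the case $p=1$ (should one wish to extend the statement to it), where $w^{1-p'}$ is undefined and the analogue of the computation must be replaced by a direct appeal to the $A_1(\mu)$ condition $\|w^{-1}\|_{L^\infty(I)} \lesssim \langle w\rangle_I^{-1}$. For the statement as given, with $p$ tacitly in $(1,\infty)$, the proof is complete after the rearrangement above.
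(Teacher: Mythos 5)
Your proof is correct and is exactly the canonical Hölder-plus-$A_p$ argument; the paper simply states this as a ``standard lemma'' without proof, so there is no in-paper argument to diverge from. The algebra ($-p'/p = 1-p'$, $p/p' = p-1$) and the final rearrangement all check out, and your remark about $p=1$ is apt but moot since the lemma is only invoked with $p\in(1,\infty)$.
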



\begin{prop} \label{prop:estimateCNS}
Let $\Ss$ be sparse and $N\in \N$. Then
$$
\|\C_\Ss^N\|_{\Lp(w d\mu) \rightarrow \Lp(w d\mu)} \lesssim_{N,p} [w]_{A_p(\mu)}^{\frac{(p-1)^2+1}{p(p-1)}}[w]_{A_p^{N}(\mu)}^{1/p}.   
$$
\end{prop}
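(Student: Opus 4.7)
The plan is to establish the dual bilinear inequality
$$
|\C_\Ss^N(f_1, f_2)| \lesssim_{N,p} [w]_{A_p(\mu)}^{\frac{(p-1)^2+1}{p(p-1)}} [w]_{A_p^N(\mu)}^{1/p} \|f_1\|_{\Lp(wd\mu)} \|f_2\|_{L^{p'}(\sigma d\mu)},
$$
with $\sigma = w^{1-p'}$; by the duality $\Lp(wd\mu)^* = L^{p'}(wd\mu)$ realized through the pairing $\int fg\, w\, d\mu$, this is equivalent to the stated operator norm bound. After the substitutions $f_1 = F_1\sigma$ and $f_2 = F_2 w$, which yield $\|F_1\|_{\Lp(\sigma d\mu)} = \|f_1\|_{\Lp(wd\mu)}$ and $\|F_2\|_{L^{p'}(wd\mu)} = \|f_2\|_{L^{p'}(\sigma d\mu)}$, each valid pair $(J,K)$ contributes the pre-factor $\tfrac{\sigma(J)w(K)}{\mu(J)\mu(K)}\sqrt{m(J)m(K)}$ times $\langle F_1\rangle_J^\sigma \langle F_2\rangle_K^w$.

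The key observation is that every pair in the sum satisfies $\dyad(J,K)\leq N+2$, so the definition of $[w]_{A_p^N(\mu)}$ yields directly
$$
\frac{\sqrt{m(J)m(K)}\,\sigma(J)^{1/p'}w(K)^{1/p}}{\mu(J)\mu(K)} \leq [w]_{A_p^N(\mu)}^{1/p},
$$
so the pre-factor rewrites as $[w]_{A_p^N(\mu)}^{1/p}\sigma(J)^{1/p}w(K)^{1/p'}$. Enumerating admissible pairs via the functions $c_j$ from Section~\ref{sec:section2} and applying H\"older's inequality separately for each $j\in\{1,\dots,N'\}$ (the resulting combinatorial constant is $\lesssim_N 1$ because each $K$ is paired with at most $N'$ intervals $J$, and vice versa) reduces the problem to bounding
$$
\sum_{J \in \Ss} (\langle F_1\rangle_J^\sigma)^p \sigma(J) \qquad \text{and} \qquad \sum_{K\in \Ss}(\langle F_2\rangle_K^w)^{p'} w(K).
$$
Both sums are handled by Lemma~\ref{FairDivision}: applied to $\sigma$ (whose $A_{p'}(\mu)$-characteristic equals $[w]_{A_p(\mu)}^{1/(p-1)}$ by the standard duality) and to $w$, it converts $\sigma(J)$ into $\sigma(E_J)$ and $w(K)$ into $w(E_K)$ at the cost of factors $[w]_{A_p(\mu)}^{1/(p-1)}$ and $[w]_{A_p(\mu)}$ respectively. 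The disjointness of $\{E_J\}_{J\in\Ss}$ from sparsity, combined with the unweighted $\Lp$-boundedness of the dyadic Hardy--Littlewood maximal operator against any measure, then bound the sums by $\|F_1\|_{\Lp(\sigma d\mu)}^p$ and $\|F_2\|_{L^{p'}(wd\mu)}^{p'}$, up to those factors. Taking $1/p$ and $1/p'$ powers and adding exponents, $\tfrac{1}{p(p-1)} + \tfrac{1}{p'} = \tfrac{(p-1)^2+1}{p(p-1)}$, produces the claimed constant.

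The main subtlety is the choice of H\"older split that couples the geometric weight $\sqrt{m(J)m(K)}$ with the measure factors $\sigma(J)^{1/p'}w(K)^{1/p}$: precisely because $(J,K)$ lie at bounded dyadic distance, the complexity-dependent characteristic $[w]_{A_p^N(\mu)}$ absorbs this combination cleanly, leaving residual factors $\sigma(J)^{1/p}$ and $w(K)^{1/p'}$ that sparsity handles in the usual way. By Lemma~\ref{lem:ApbvsApN}, $[w]_{A_p^N(\mu)}<\infty$ exactly when $w\in\ApBalanced(\mu)$, so the estimate is meaningful under the natural hypothesis on $w$.
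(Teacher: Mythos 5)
Your proof is correct and follows essentially the same route as the paper's argument for Proposition \ref{prop:estimateCNS}: rewrite the ordinary averages as $\sigma$- and $w$-weighted averages, use the $[w]_{A_p^N(\mu)}$ characteristic to absorb the $\sqrt{m(J)m(K)}/(\mu(J)\mu(K))$ factor together with $\sigma(J)^{1/p'}w(K)^{1/p}$, apply H\"older in $j$, pass from $\sigma(J),w(K)$ to $\sigma(E_J),w(E_K)$ via Lemma \ref{FairDivision}, and finish with the universal $L^p$-boundedness of the dyadic maximal operator over the measures $\sigma\,d\mu$ and $w\,d\mu$. The paper only writes out $p=2$; your version makes explicit the substitutions $f_1=F_1\sigma$, $f_2=F_2 w$ and the exponent bookkeeping $\tfrac{1}{p(p-1)}+\tfrac{1}{p'}=\tfrac{(p-1)^2+1}{p(p-1)}$ that constitute the ``minor modifications'' the paper alludes to, and both match at $p=2$.
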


\begin{proof} 
We only prove the result for $p=2$, and we proceed by duality: we need to show that for all bounded, compactly supported nonnegative $f_1 , f_2 \in \Ltwo(w d\mu)$, 
$$|\C_\Ss^N f_1, w f_2 \rangle| \lesssim [w]_{A_2(\mu)}[w]_{A_2^{N}(\mu)}^{1/2} \|f_1\|_{\Ltwo(w d\mu)}\|f_2\|_{\Ltwo(w d\mu)}.
$$ 
As in the proof of Theorem \ref{lem:LpBounds}, we only need to show the following estimate:
$$
\sum_{ \substack{ I \in \Ss: \\ c_j(I) \in \Ss}} \langle f_1 \rangle_{I} \langle w f_2 \rangle_{c_j(I)} \sqrt{m(I) m(c_j(I))}     \leq C  [w]_{A_2(\mu)} [w]_{A_2^{N}(\mu)}^{1/2} \|f_1\|_{\Ltwo (w)} \|f_2\|_{L^{2}(w)}.    
$$
We estimate as follows:
\begin{align*}
 \sum_{ \substack{ I \in \Ss: \\ c_j(I) \in \Ss}} \langle f_1 \rangle_{I} &\langle w f_2 \rangle_{c_j(I)} \sqrt{m(I) m(c_j(I))}
 = \sum_{ \substack{ I \in \Ss: \\ c_j(I) \in \Ss}} \langle w f_1 \rangle_{I}^{w^{-1}} \langle f_2 \rangle_{c_j(I)}^{w} \langle w^{-1} \rangle_{I}  \langle w \rangle_{c_j(I)}  \sqrt{m(I) m(c_j(I))} \\
& \sim  \sum_{ \substack{ I \in \Ss: \\ c_j(I) \in \Ss}} \langle w f_1 \rangle_{I}^{w^{-1}} \langle f_2 \rangle_{c_j(I)}^{w} \sqrt{\frac{m(I) m(c_j(I))}{\mu(I)\mu(c_j(I))}} \left(\langle w^{-1} \rangle_{I}  \langle w \rangle_{c_j(I)}\right)^{1/2} (w^{-1}(I))^{1/2} w(c_j(I))^{1/2}  \\
& \lesssim [w]_{A_2^{N}(\mu)}^{1/2} \sum_{ \substack{ I \in \Ss: \\ c_j(I) \in \Ss}} \langle w f_1 \rangle_{I}^{w^{-1}} \langle f_2 \rangle_{c_j(I)}^{w} (w^{-1}(I))^{1/2} w(c_j(I))^{1/2} \\
& \lesssim [w]_{A_2^{N}(\mu)}^{1/2} \left( \sum_{  I \in \Ss} (\langle w f_1 \rangle_{I}^{w^{-1}})^2 w^{-1}(I) \right)^{1/2} \left(\sum_{ J \in \Ss} (\langle f_2 \rangle_{J}^{w})^2  w(J)  \right)^{1/2}\\
& \lesssim [w]_{A_2(\mu)}[w]_{A_2^{N}(\mu)}^{1/2} \left( \sum_{  I \in \Ss} (\langle w f_1 \rangle_{I}^{w^{-1}})^2 w^{-1}(E_I)  \right)^{1/2} \left(\sum_{ J \in \Ss} (\langle f_2 \rangle_{J}^{w})^2  w(E_J) \right)^{1/2}\\
& \leq [w]_{A_2(\mu)} [w]_{A_2^{N}(\mu)}^{1/2} \|\Maximal^{w^{-1}}(f_1 w)\|_{\Ltwo(w^{-1} d \mu)} \|\Maximal^w(f_2 )\|_{\Ltwo(w d \mu)} \\
& \lesssim  [w]_{A_2(\mu)} [w]_{A_2^{N}(\mu)}^{1/2} \|f_1 w\|_{\Ltwo(w^{-1} d \mu)} \|f_2 \|_{\Ltwo(w d \mu)} \\
& = [w]_{A_2(\mu)} [w]_{A_2^{N}(\mu)}^{1/2} \|f_1\|_{\Ltwo(w d \mu)} \|f_2 \|_{\Ltwo(w d \mu)},\\
\end{align*}
where we used Lemma \ref{FairDivision} in the fifth line. This completes the proof. 
\end{proof}
 
 We can also obtain a weighted weak-type estimate for the sparse form when $p=1.$

\begin{prop} \label{WeightedWeakTypeSparse} Let $\Ss$ be sparse and $N \in \N$. Then
 $$ \|\C_\Ss^N \|_{\Lone(w d\mu) \rightarrow \Lonew(w d \mu) } \lesssim_N [w]_{A_1^N(\mu)}^{2}.
 $$
\end{prop}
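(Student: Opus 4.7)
The plan is to establish this bound via the $L^1$--$L^{1,\infty}$ duality used in the unweighted Proposition \ref{prop:WeakType}: it will suffice to produce, for each $f \in L^1(wd\mu)$ with $\|f\|_{L^1(wd\mu)} = 1$ and each Borel set $G$ of finite positive $w$-measure, a subset $G' \subset G$ with $w(G) \leq 2 w(G')$ such that $\C_\Ss^N(|f|, w|f_2|) \lesssim_N [w]_{A_1^N(\mu)}^2$ for every $|f_2| \leq \one_{G'}$. I will take $G' := G \setminus H$, where $H = \{\Maximal^N f > C[w]_{A_1^N(\mu)}/w(G)\}$ with $C$ chosen so that the weighted weak-type $(1,1)$ estimate for $\Maximal^N$ (Proposition \ref{MaximalChar}, whose proof gives operator norm $\lesssim [w]_{A_1^N(\mu)}$) forces $w(H) \leq w(G)/2$. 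The key structural consequence, exactly as in the unweighted argument, is that any term in $\C_\Ss^N(|f|, w|f_2|)$ indexed by $(I, c_j(I))$ can be nonzero only when $c_j(I)$ meets $G' \subset H^c$; since $\Coneb(I,c_j(I))\langle f\rangle_I \one_{c_j(I)} \leq \Maximal^N f$, this forces the uniform bound $\Coneb(I,c_j(I))\langle f\rangle_I \lesssim [w]_{A_1^N(\mu)}/w(G)$ on every surviving index.

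With that in hand I will fix $j$ and rewrite the $j$-th slice as $S_j := \sum_I \Coneb(I,c_j(I))\langle f\rangle_I \langle f_2\rangle_{c_j(I)}^w w(c_j(I))$, splitting by Cauchy--Schwarz as $S_j \leq A_j^{1/2} B_j^{1/2}$ where $A_j$ collects the $f$-factors squared and $B_j$ collects the $f_2$-factors squared. For $B_j = \sum_I (\langle f_2\rangle_{c_j(I)}^w)^2 w(c_j(I))$, the finite multiplicity of the enumeration $c_j$ reduces the estimate to $\sum_{J \in \Ss}(\langle f_2\rangle_J^w)^2 w(J)$; then Lemma \ref{FairDivision} with $p=1$ (recalling that $[w]_{A_1(\mu)} \leq [w]_{\AoneBalanced(\mu)} \lesssim [w]_{A_1^N(\mu)}$ by Lemma \ref{lem:ApbvsApN}) transfers $\mu$-sparsity of $\Ss$ into $w$-sparsity, and the universal $L^2(wd\mu)$-boundedness of the dyadic maximal operator $\Maximal^w$ closes the estimate as $B_j \lesssim_N [w]_{A_1^N(\mu)} \|f_2\|_{L^2(wd\mu)}^2 \leq [w]_{A_1^N(\mu)} w(G)$.

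The technical heart of the argument, and where the square of $[w]_{A_1^N(\mu)}$ is extracted, is the bound on $A_j = \sum_I \Coneb(I,c_j(I))^2 \langle f\rangle_I^2 w(c_j(I))$. I will introduce the linearizing function $\tilde f := \sum_I \Coneb(I,c_j(I))\langle f\rangle_I \one_{E_{c_j(I)}}$, summing only over the surviving indices with $c_j(I)\not\subset H$, and reduce $A_j \lesssim_N [w]_{A_1^N(\mu)} \int \tilde f^2 w d\mu$ via the same sparsity transfer used for $B_j$. Two pointwise bounds on $\tilde f$ are then available: $\tilde f \lesssim_N \Maximal^N f$ from a direct comparison with the definition of $\Maximal^N$, and the uniform $L^\infty$ bound $\tilde f \lesssim_N [w]_{A_1^N(\mu)}/w(G)$ from the key structural observation. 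A layer-cake integration that combines these two bounds, followed by a second use of the weighted weak-type of $\Maximal^N$, will yield $\int \tilde f^2 w d\mu \lesssim_N [w]_{A_1^N(\mu)}^2/w(G)$. Assembling the two Cauchy--Schwarz factors gives $S_j^2 \lesssim_N A_j B_j \lesssim_N [w]_{A_1^N(\mu)}^4$, and summing over $j \in \{1,\dots,N'\}$ closes the proof.

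The hard part will be the weighted layer-cake estimate on $\tilde f$: the $L^\infty$ truncation level has to be tuned to the exact threshold used to build $H$ so that a single layer-cake pass against the weighted weak-type bound of $\Maximal^N$ delivers precisely one additional factor of $[w]_{A_1^N(\mu)}$. The quadratic dependence on the weight characteristic then has a transparent source: the two independent invocations of the weighted weak-type of $\Maximal^N$---once to excise $H$ and once inside the layer-cake integral---each contribute a factor of $[w]_{A_1^N(\mu)}$.
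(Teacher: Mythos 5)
Your proposal reproduces the paper's argument essentially verbatim: the same weighted dualization of weak type, the same exceptional set $H$ built from the weighted weak-type bound for $\Maximal^N$, the same reduction to a surviving-index bound $\Coneb(I,c_j(I))\langle f\rangle_I \lesssim [w]_{A_1^N(\mu)}/w(G)$, the same Cauchy--Schwarz split and FairDivision/$\Maximal^w$ treatment of the $f_2$-factor, the same linearizing function $\widetilde f$, and the same layer-cake argument yielding $\|\widetilde f\|_{L^2(wd\mu)}^2 \lesssim [w]_{A_1^N(\mu)}^2/w(G)$. The only cosmetic deviation is invoking Lemma \ref{FairDivision} at $p=1$ rather than $p=2$, which is immaterial since $[w]_{A_2(\mu)}\leq[w]_{A_1(\mu)}\leq[w]_{A_1^N(\mu)}$.
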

\begin{proof}
It is enough to show
$$ 
\sup_{\substack{f_1: \\ \|f_1\|_{\Lone(w d \mu)} \leq 1}} \sup_{\substack{G \subset \mathbb{R}}} \inf_{\substack{G': \\ w(G) \leq 2 w(G')}} \sup_{f_2: |f_2| \leq \one_{G'}} \mathcal{C}_{\Ss}^N(|f_1|, w|f_2|) \lesssim [w]_{A_1^N(\mu)}^{2}.
$$
We follow the general scheme of the proof of Theorem \ref{prop:WeakType}. Let $G$ be an arbitrary Borel set, and $f_1$ with $\|f_1\|_{\Lone(w d \mu)}\leq 1$. Define
$$
H= \left\{x \in \mathbb{R}: \Maximal^N f_1(x)> \frac{C_0}{w(G)}\right\},
$$ 
 where $C_0$ is some sufficiently large positive constant. By Proposition \ref{MaximalChar}, $w(H) \leq w(G)/2$ as long as $C_0 \geq C_0' [w]_{A_1^N(\mu)}$, where $C_0'$ depends only on $\mu$ and $N$. As before, we take $G'= G \setminus H.$ Let $f_2$ be arbitrary with $|f_2| \leq \one_{G'}.$  It is enough to show, for $1 \leq j \leq C_N$, that we have
 $$
 \sum_{\substack{I \in \Ss: \\ c_j(I) \in \Ss}} \langle f_1 \rangle_{I} \langle w f_2 \rangle_{c_j(I)} \sqrt{m(I)} \sqrt{m(c_j(I))} \lesssim [w]_{A_1^N(\mu)}^{2} 
 $$
By the same argument as the proof of Proposition \ref{MaximalChar}, we may assume without loss of generality that if $I \in \Ss$ is such that $c_j(I) \in \Ss$, then we have the estimate
 $$
 \langle f_1 \rangle_{I} \leq \frac{\mu(c_j(I))}{\sqrt{m(I) m(c_j(I))}} \frac{C_0' [w]_{A_1^N(\mu)}}{w(G)}. 
 $$
 We then estimate, using Cauchy-Schwartz inequality, Lemma \ref{FairDivision}, the fact that $[w]_{A_2(\mu)} \leq [w]_{A_1^N(\mu)}$, and the weighted $\Ltwo$ bound for $\Maximal^w$:
 \begin{align*}
\sum_{ \substack{ I \in \Ss: \\ c_j(I) \in \Ss}} \langle f_1 \rangle_{I} \langle w f_2 \rangle_{c_j(I)} &\sqrt{m(I) m(c_j(I))} \\
&\lesssim \left(\sum_{ \substack{ I \in \Ss: \\ c_j(I) \in \Ss}} (\langle f_1 \rangle_{I})^2 \left(\frac{m(I) m(c_j(I))}{\mu(c_j(I))^2}\right) w(c_j(I)) \right)^{1/2}   \left(\sum_{J \in \Ss} (\langle wf_2 \rangle_{J})^2 \frac{\mu(J)^2}{w(J)} \right)^{1/2}\\
 & \lesssim [w]_{A_1^N(\mu)} \left(\sum_{ \substack{ I \in \Ss: \\ c_j(I) \in \Ss}} (\langle f_1 \rangle_{I})^2 \left(\frac{m(I) m(c_j(I))}{\mu(c_j(I))^2}\right)  w(E_{c_j(I)}) \right)^{1/2} \left(\sum_{J \in \Ss} (\langle f_2 \rangle_{J}^w)^2 w(E_J)  \right)^{1/2} \\
 & \lesssim [w]_{A_1^N(\mu)} \left(\sum_{ \substack{ I \in \Ss: \\ c_j(I) \in \Ss}} (\langle f_1 \rangle_{I})^2 \left(\frac{m(I) m(c_j(I))}{\mu(c_j(I))^2}\right)  w(E_{c_j(I)})  \right)^{1/2} w(G)^{1/2}.
\end{align*}
We will now show $$(\ast):=\left(\sum_{ \substack{ I \in \Ss: \\ c_j(I) \in \Ss}} (\langle f_1 \rangle_{I})^2 \left(\frac{m(I) m(c_j(I))}{\mu(c_j(I))^2}\right)  w(E_{c_j(I)})  \right)^{1/2} \lesssim [w]_{A_1^N(\mu)} w(G)^{-1/2}.$$ 
To see this, let 
$$\widetilde{f}= \sum_{ \substack{ I \in \Ss: \\ c_j(I) \in \Ss}} \Coneb(I,c_j(I)) \, \langle f_1 \rangle_{I} \one_{E_{c_j(I)}}, 
$$
which satisfies 
$$
 \|\widetilde{f}\|_{L^\infty(\mu)} \lesssim \frac{C_0}{w(G)}, \quad |\widetilde{f}| \lesssim \Maximal^N f_1, \quad \mbox{and} \quad \|\widetilde{f}\|_{\Ltwo(w d\mu)} \sim (\ast).$$
This, combined with the weighted weak-type estimate for $\Maximal^N$, and the $\Lone(w d \mu)$ normalization of $f_1$, yields  
\begin{align*}\| \widetilde{f} \|_{\Ltwo(w  d \mu )}^2 & = 2\int_{0}^\infty \lambda w(\{|\widetilde{f}|> \lambda\}) \, d \lambda \\
& \lesssim  \int_{0}^{\frac{C[w]_{A_1^N(\mu)}}{w(G)}} \lambda w (\{ x: \Maximal^N f_1(x)> \lambda\}) \, d \lambda.\\
& \lesssim [w]_{A_1^N(\mu)} \|f_1\|_{\Lone(w d \mu)} \int_{0}^{\frac{C[w]_{A_1^N(\mu)}}{w(G)}} 1 \, d \lambda\\
 & \lesssim \frac{[w]_{A_1^N(\mu)}^2}{w(G)},
\end{align*}
proving the required estimate.
\end{proof}

Using Theorem \ref{th:ModifiedSparse} and the estimates for $\mathcal{A}_\Ss$ (which are analogous to those in the case of the Lebesgue measure), we get the following consequence:

\begin{cor} \label{WeightedBounds} Let $N\in \N$. If $T\in\HS(s,t)$ and $s+t \leq N$ or $T=\Maximal^N$, then 
\begin{itemize}
\item If $p>1$,
$$ 
\|T\|_{\Lp(w d\mu) \rightarrow \Lp(w d\mu)} \lesssim [w]_{A_p(\mu)}^{\max\{1, \frac{1}{p-1}\}}+[w]_{A_p(\mu)}^{\frac{(p-1)^2+1}{p(p-1)}}[w]_{A_p^{N}(\mu)}^{1/p}. 
$$
\item The following weak-type estimate holds:
 $$ \|T\|_{\Lone(w d\mu) \rightarrow \Lonew(w d \mu) } \lesssim [w]_{A_1^N(\mu)}^{2}.$$
\end{itemize}
\end{cor}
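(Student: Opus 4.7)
The plan is to combine the sparse domination results already proved with the weighted estimates established for each sparse form. By Theorem \ref{th:ModifiedSparse} (when $T \in \HS(s,t)$ with $s+t \leq N$) or Theorem \ref{MaximalSparse} (when $T = \Maximal^N$), for every pair of compactly supported, bounded, nonnegative $f_1, f_2$ there is a sparse collection $\Ss$ such that
$$|\langle Tf_1, f_2\rangle| \lesssim \mathcal{A}_{\Ss}(f_1, f_2) + \mathcal{C}_{\Ss}^N(f_1, f_2).$$
By the standard density and (in the Haar shift case) duality arguments, both the strong and weak type operator norms of $T$ are therefore controlled by the corresponding norms of the bisublinear forms on the right hand side, uniformly in the sparse collection $\Ss$.

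For the strong $L^p(w\, d\mu)$ bound, the two sparse forms are handled separately. The contribution of $\mathcal{C}_{\Ss}^N$ is exactly what Proposition \ref{prop:estimateCNS} provides, yielding the term $[w]_{A_p(\mu)}^{\frac{(p-1)^2+1}{p(p-1)}}[w]_{A_p^N(\mu)}^{1/p}$. For $\mathcal{A}_{\Ss}$, we appeal to the classical weighted bound for ordinary sparse forms; the proof proceeds by duality, Carleson embedding, and the universal weighted $L^p$ estimate for the dyadic maximal operator $\Maximal^w$, none of which require doubling of $\mu$. Together with the $A_\infty$-type control supplied by Lemma \ref{FairDivision}, this gives $\|\mathcal{A}_\Ss\|_{L^p(wd\mu)\to L^p(wd\mu)} \lesssim [w]_{A_p(\mu)}^{\max\{1,1/(p-1)\}}$. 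Summing the two estimates yields the first claim.

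For the weak-type $(1,1)$ bound, the analogous classical argument for $\mathcal{A}_\Ss$ gives $\|\mathcal{A}_\Ss\|_{L^1(wd\mu)\to L^{1,\infty}(wd\mu)} \lesssim [w]_{A_1(\mu)}$; since $[w]_{A_1(\mu)} \leq [w]_{A_1^N(\mu)}$ and both characteristics are $\geq 1$, this is in particular $\lesssim [w]_{A_1^N(\mu)}^2$. The corresponding bound for $\mathcal{C}_{\Ss}^N$ is Proposition \ref{WeightedWeakTypeSparse}, which already delivers the same power $[w]_{A_1^N(\mu)}^2$. Adding these produces the stated weak-type estimate. As a byproduct, plugging this strong $L^p$ bound into the case $T=\Maximal^N$ closes the sufficiency direction of Proposition \ref{MaximalChar} that was postponed to this corollary.

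There is no significant obstacle: all the deep work is in the sparse domination theorems and the two propositions bounding $\mathcal{C}_{\Ss}^N$. The only points to verify carefully are the transfer of the classical weighted sparse form estimates to the balanced measure setting (which is formal, since those proofs use only the Carleson packing characterization of sparseness, the weighted $L^p$ bound for $\Maximal^w$, and Lemma \ref{FairDivision}) and the bookkeeping that turns the sparse domination for nonnegative bounded $f_1,f_2$ into an operator norm bound via density and, for $T\in\HS(s,t)$, duality.
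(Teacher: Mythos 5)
Your proposal is correct and follows essentially the same route as the paper, which derives the corollary directly from Theorem \ref{th:ModifiedSparse} (resp.\ Theorem \ref{MaximalSparse}), the classical weighted bounds for $\mathcal{A}_\Ss$, Proposition \ref{prop:estimateCNS}, and Proposition \ref{WeightedWeakTypeSparse}, exactly as you describe. Your closing observation that this also supplies the postponed sufficiency step in Proposition \ref{MaximalChar} for $p>1$ matches the authors' remark there.
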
  

\begin{remark}\label{OneSidedWeight}
One can follow similar arguments to obtain ``one sided'' weight conditions that, for example, are sufficient for the boundedness on $\Ltwo(w d\mu)$ (or $\Lp(w d\mu)$) of $\Sha$ or other particular operators $T\in\HS(s,t)$. Indeed, one can check that the condition
\begin{equation} [w]_{A_2^{0,1}(\mu)}:=  \sup_{\substack{J \in \D:\\ K \in \{ J, J^{b}_{-}, J^{b}_{+}\}}}\Ctwob(K,J) \langle w \rangle_{K} \langle w^{-1} \rangle_{J} < \infty \label{HilbertWeight} \end{equation}
is sufficient for the boundedness of $\Sha$. However, such conditions are less natural in the sense that they are asymmetric with respect to $w$ and $w^{-1}$ and are specialized to the form of the particular operator under consideration. 
\end{remark}

We finally consider the necessity of the $\ApBalanced(\mu)$ condition. 

\begin{lem}  \label{PairBoundBelow}
Let $N \in \N$ and $p \in (1,\infty)$. Suppose there exists a constant $C$ so that
$$ \max_{\substack{s,t \in \N: \\ s+t \leq N }} \sup_{T\in\HS(s,t)} \|T \|_{\Lp(w d\mu) \rightarrow \Lp(w d\mu) } \leq C.
$$ 
Let $J,K \in \D$ be such that $2<\dyad(J,K)\leq N+2$ and $J \cap K= \emptyset.$
 Then  
 $$  \Cpb(J,K)) \left(\langle w \rangle_{J} \langle w^{1-p'} \rangle_{K}^{p-1}\right) \lesssim  C^p,
 $$ where the implicit constant is independent of the particular intervals $J,K$. 
\end{lem}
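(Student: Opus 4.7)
The plan is to prove the conclusion by testing the uniform boundedness hypothesis against a single carefully chosen simple Haar shift of complexity at most $N$, evaluated on a test function tailored to $w^{1-p'}$ and the interval $K$. Throughout, I would regularize by replacing $w$ with $w+\varepsilon$ and letting $\varepsilon \to 0^+$ (as in the proof of Proposition \ref{MaximalChar}) to ensure local integrability of $w^{1-p'}$.

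First I would locate the minimal common dyadic ancestor $I$ of $J$ and $K$, writing $J \in \D_s(I)$ and $K \in \D_t(I)$ with $s+t = \dyad(J,K)$ and $s,t\geq 1$ (the latter because $J\cap K=\emptyset$). Setting $J':=\widehat{J}\in\D_{s-1}(I)$ and $K':=\widehat{K}\in\D_{t-1}(I)$, the crucial observation is that the pair $(J',K')$ sits at combined depth $s+t-2\leq N$ below $I$. I would then define $T\in\HS(t-1,s-1)$ to be the simple shift whose only nonzero coefficient is $\alpha_{K',J'}^I = 1$, so that $\|\alpha\|_\infty = 1$, $Tf = \langle f,h_{K'}\rangle h_{J'}$, and $\|T\|_{L^p(wd\mu)\to L^p(wd\mu)}\leq C$ by hypothesis.

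The test function is $f = w^{1-p'}\one_K$. Because $K$ coincides with exactly one of the two children of $K'$, a direct Haar computation yields
$$|\langle f,h_{K'}\rangle| = \sqrt{m(K')}\,\langle w^{1-p'}\rangle_K,\qquad \|f\|_{L^p(wd\mu)}^p = \langle w^{1-p'}\rangle_K\,\mu(K).$$
For the output side, I would retain only the summand of $\|h_{J'}\|_{L^p(wd\mu)}^p$ corresponding to the child $J$ of $J'$, giving $\|h_{J'}\|_{L^p(wd\mu)}^p \geq m(J')^{p/2}\,\langle w\rangle_J/\mu(J)^{p-1}$. The balance of $\mu$ replaces $m(J')$ by $m(J)$ and $m(K')$ by $m(K)$ at the cost of universal constants, and multiplying the pieces together produces
$$\|Tf\|_{L^p(wd\mu)}^p \gtrsim \frac{m(J)^{p/2}m(K)^{p/2}\,\langle w\rangle_J\,\langle w^{1-p'}\rangle_K^p}{\mu(J)^{p-1}}.$$
Dividing by $\|f\|_{L^p(wd\mu)}^p$ and invoking the operator bound yields the desired inequality $C_p^b(J,K)\,\langle w\rangle_J\,\langle w^{1-p'}\rangle_K^{p-1}\lesssim C^p$.

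The main obstacle is bridging the apparent gap between the allowed complexity $N$ of the Haar shifts and the range $\dyad(J,K)\leq N+2$ in the conclusion. My plan resolves this by exploiting that every Haar function $h_{I'}$ is supported on, and ``sees'', the two children of $I'$: a simple shift of complexity $(t-1,s-1)$ anchored at the parents $J'=\widehat{J}$, $K'=\widehat{K}$ effectively probes pairs at dyadic distance $s+t\leq N+2$, which is exactly the range in the hypothesis. Aside from this bookkeeping, the argument is a standard Muckenhoupt-style testing computation; only minor case analysis is required when $s=1$ or $t=1$ (in which case $J'$ or $K'$ collapses to $I$), and the same computation carries through verbatim.
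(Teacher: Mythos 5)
Your proposal is correct and follows essentially the same route as the paper: both test the uniform bound against a rank-one Haar shift anchored at the parents $\widehat{K}\to\widehat{J}$ (thereby absorbing the ``$+2$'' gap between $\dyad(J,K)\leq N+2$ and the available complexities $s+t\leq N$), with test function $w^{1-p'}\one_K$, and conclude by a Muckenhoupt-style rearrangement. The only cosmetic differences are that the paper restricts to $p=2$, phrases the test via the bilinear form $\langle Tf_1,wf_2\rangle$ with sign-adjusted coefficients over all $I$ before discarding all but the $I=L$ term, whereas you keep a single nonzero coefficient and compute $\|Tf\|_p^p/\|f\|_p^p$ directly for general $p$ — a slightly cleaner bookkeeping that reaches the same estimate.
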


\begin{proof}
 By definition, $J, K$ have a unique (minimal) common dyadic ancestor interval $L$, and there exist nonnegative integers $s,t$ satisfying $s+t \leq N$ so that $\dyad(K,L)=s+1$ and $\dyad(J,L)=t+1$. Without loss of generality, we may suppose $K=L_{s -}^{m}$ and $J=L_{t -}^{n}.$ Again, for simplicity we only prove the result for $p=2.$ We consider test functions $f_1= w^{-1} \one_{L_{s-}^m}$ and $f_2= \one_{L_{t-}^n}.$ Straightforward computations yield
$$ \|f_1\|_{\Ltwo(w d\mu)}^2 = w^{-1}(L_s^{m-}), \quad \|f_2\|_{\Ltwo(w d \mu)}^2 =w(L_t^{n-}).$$
Choose the sequence $\alpha$ to be the unique sequence of signs satisfying $\alpha^I_{I_s^m, I_t^n} \langle f_1, h_{I_s^m} \rangle \langle h_{I_t^n}, w f_2 \rangle \geq 0$ for all $I \in \D$ (and $\alpha_{J,K}^{I}$ otherwise), and consider the operator $T\in\HS(s,t)$ defined by $\alpha$. Then we estimate, using the hypothesis :
\begin{align*}
C \|f_1\|_{\Ltwo(w d\mu)} \|f_2\|_{\Ltwo(w d\mu)} & \geq \langle |T f_1,w f_2 \rangle| \\
& = \left| \sum_{I \in \D} \alpha^I_{ I_s^m, I_t^n} \langle f_1, h_{I_s^m} \rangle \langle h_{I_t^n}, w f_2 \rangle\right|\\
& = \sum_{I \in \D}  |\langle f_1, h_{I_s^m} \rangle| |\langle h_{I_t^n}, w f_2 \rangle|\\
& \geq | \langle f_1, h_{L_s^m} \rangle| |\langle h_{L_t^n}, w f_2 \rangle|\\
& = \sqrt{m(L_s^m) m(L_t^n)} \left(\frac{w^{-1}(L_{s-}^{m})}{\mu(L_{s-}^{m})}\right) \left(\frac{w(L_{t-}^{n})}{\mu(L_{t-}^{n})}\right)\\
& \sim  \sqrt{\frac{m(J) m(K)}{\mu(J) \mu(K)}} \left(\langle w \rangle_{J} \langle w^{-1} \rangle_{K}\right)^{1/2} |f_1\|_{\Ltwo(w)} \|f_2 \|_{\Ltwo(w)}.
\end{align*} 
Squaring both sides and cancelling the $\Ltwo(w d\mu)$ norms then produces the desired estimate.
\end{proof}

The above result is the remaining bit that allows us to characterize the class of weights for which Haar shifts are $L^p$-bounded.

\begin{cor}\label{Weighted Characterization}
Let $p \in (1,\infty)$ and $N\in\N$. There exists a constant $C$ such that all operators 
$$
T \in \bigcup_{\substack{s,t \in \N: \\ s+t \leq N}} \HS(s,t)
$$
satisfy
$$ \|T\|_{\Lp(w d\mu) \rightarrow \Lp(w d\mu) } \leq C$$
if and only if $w \in A_p^{b}(\mu)$.     
\end{cor}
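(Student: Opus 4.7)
The plan is to prove the two implications separately.

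\emph{Sufficiency.} Suppose $w \in \ApBalanced(\mu)$. The diagonal case $J = I$ in the definition of $[w]_{\ApBalanced(\mu)}$ immediately yields $[w]_{A_p(\mu)} \leq [w]_{\ApBalanced(\mu)} < \infty$, while Lemma \ref{lem:ApbvsApN} gives $[w]_{A_p^N(\mu)} \lesssim_N [w]_{\ApBalanced(\mu)}^{2^{N-1}} < \infty$. Plugging these estimates into Corollary \ref{WeightedBounds} furnishes a single constant $C$, depending only on $p$, $N$, and $[w]_{\ApBalanced(\mu)}$, that controls $\|T\|_{\Lp(w d\mu)\to \Lp(w d\mu)}$ uniformly over $T \in \bigcup_{s+t \leq N} \HS(s,t)$.

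\emph{Necessity.} Suppose conversely that such a uniform $C$ exists, and assume $N \geq 1$. The definition of $[w]_{\ApBalanced(\mu)}$ involves two kinds of pairs $(I,J)$: the diagonal $J = I$, and the ``nephew-type'' pairs $J \in \{I^b_-, I^b_+\}$ together with their transposes $I \in \{J^b_-, J^b_+\}$. I handle these two types separately. For the nephew-type pairs, unwinding the notation shows $\dyad(I,J) = 3$ and $I \cap J = \emptyset$, so they fall in the range $2 < \dyad \leq N+2$ of Lemma \ref{PairBoundBelow}. A direct application of that lemma yields
\[
\Cpb(I,J)\,\langle w\rangle_I \,\langle w^{1-p'}\rangle_J^{p-1} \lesssim C^p
\]
uniformly in all such pairs.

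For the diagonal case, the hypothesis is used at complexity $(0,0)$. Since $\HS(0,0)$ consists precisely of Haar multipliers $Tf = \sum_I \alpha_I \langle f, h_I\rangle h_I$ with $\|\alpha\|_\infty \leq 1$, uniform $\Lp(w d\mu)$-boundedness over all such $\alpha$ is equivalent to unconditionality of the Haar basis $\{h_I\}_{I\in\D}$ in $\Lp(w d\mu)$. By the classical one-weight characterization of Haar-basis unconditionality, whose proof adapts verbatim to the balanced-measure setting, this is equivalent to the dyadic $A_p(\mu)$ condition
\[
\sup_{I \in \D} \langle w\rangle_I \langle w^{1-p'}\rangle_I^{p-1} \lesssim C^p,
\]
namely the diagonal part of $[w]_{\ApBalanced(\mu)}$. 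Combining the two cases yields $[w]_{\ApBalanced(\mu)} \lesssim_N C^p < \infty$, proving necessity.

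I expect the main obstacle to be the diagonal case: Lemma \ref{PairBoundBelow} specialized to $(s,t)=(0,0)$ produces only differences of child-level averages of $w$ and $w^{1-p'}$, rather than the averages over $I$ itself, so an external unconditionality input is really needed to isolate the dyadic $A_p(\mu)$ bound on a single interval. By contrast, the off-diagonal nephew pairs fit Lemma \ref{PairBoundBelow} cleanly because a genuinely nonzero complexity in the Haar shift lets the test functions sit on \emph{different} children at the top level.
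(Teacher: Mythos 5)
Your proof is correct and follows essentially the same route as the paper's. Sufficiency is the chain $\ApBalanced \Rightarrow A_p(\mu)$ and (via Lemma \ref{lem:ApbvsApN}) $\ApBalanced \Rightarrow A_p^N(\mu)$, then Corollary \ref{WeightedBounds}; necessity splits into the off-diagonal nephew pairs, handled exactly by Lemma \ref{PairBoundBelow}, and the diagonal $J=I$ case, handled by the weighted characterization of Haar multipliers. The paper does the same thing, citing \cite{TTV} (for $p=2$, with a remark on extending to general $p$) for the diagonal step that you describe as ``classical one-weight characterization of Haar-basis unconditionality.'' Your closing observation that Lemma \ref{PairBoundBelow} at complexity $(0,0)$ only produces differences of child-level averages and cannot isolate the diagonal $A_p(\mu)$ constant is exactly the reason the external Haar-multiplier input is needed, and the paper handles it the same way you do.
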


\begin{proof}
One direction immediately follows from Corollary \ref{WeightedBounds}. For the other,  we need to recall a relevant fact concerning dyadic operators of total complexity $0.$ In particular, given a sequence $\alpha=\{ \alpha_J\}_{J \in \D} \in \ell^\infty$, define the \emph{Haar multiplier} $\Pi_{\D}^\alpha$ as follows:
$$ \Pi_{\D}^\alpha(f):= \sum_{J \in \D} \alpha_J \langle f, h_J \rangle h_J$$
It is immediate that $\Pi_{\D}^\alpha$ is bounded on $\Ltwo(\mu).$ It is also well-known that for any Borel measure $\mu$, such an operator is bounded on $\Lp(\mu)$ for $1<p<\infty$. In fact, these operators admit sparse domination in the usual sense, see \cite{Lacey2017}. Moreover, the collection of such operators with $\|\alpha\|_{\ell^\infty} \leq 1$ is uniformly bounded on $\Lp(w d\mu)$ if and only if $w \in A_p(\mu)$, see \cite {TTV} for $p=2$. For $p \neq 2$, the sufficiency of the $A_p(\mu)$ condition may be obtained via the sparse domination, while the necessity may be obtained in the same way as $p=2$, see the proof of \cite{TTV}*{Proposition 2.3}. This fact together with Lemma \ref{PairBoundBelow} completes the proof.
\end{proof}
We end this subsection observing that Theorem \ref{th:thmB} follows from Proposition \ref{BadWeight} and Corollary \ref{Weighted Characterization}.

\subsection{Higher-Dimensional Haar Shifts} It is a natural question whether all results in this paper can be generalized to Haar shift operators defined on $\R^n$, $n>1$. A central point of departure from the one-dimensional case occurs in constructing the Haar system (say according to the abstract definition in \cite{LMP2014}), as there are more degrees of freedom. We make a couple of remarks concerning the higher dimensional case.

\begin{rem} The higher dimensional dyadic case for non-doubling measures was partially addressed in \cite{LMP2014}, where sufficient conditions were given for the weak $(1,1)$ bound of dyadic shifts defined with respect to two generalized Haar systems. In such a general context, we do not know if there are appropriate generalizations of the modified sparse forms and weight classes so that our sparse domination or weighted estimates remain true. It would be very interesting to investigate this question, as the higher dimensional theory could be significantly more complicated. The general martingale setting, with possibly nonatomic filtrations replacing the one generated by $\D$, is also open.
\end{rem}

\begin{rem}
In the special case that the Haar system $\{h_Q\}_{Q \in \D}$ consists of functions taking only two values, we expect there to be a parallel theory based on the condition
    $$
    \sup_{Q} \|h_Q\|_{\Lone(\mu)} \|h_{Q'}\|_{L^{\infty}(\mu)} \lesssim 1,
    $$
   for $Q, Q'$ with prescribed dyadic distance, which is the right higher dimensional analog of balanced. This case had already been identified in \cite{LMP2014}. Although we have not checked the details, the two-value Haar functions can be essentially regarded as one-dimensional objects, which is why we are confident this generalization will work without much modification to the arguments.     
\end{rem}

\begin{bibdiv}
\begin{biblist}

\bib{AHMMMTV2016}{article}{
   author={Azzam, Jonas},
   author={Hofmann, Steve},
   author={Martell, Jos\'{e} Mar\'{\i}a},
   author={Mayboroda, Svitlana},
   author={Mourgoglou, Mihalis},
   author={Tolsa, Xavier},
   author={Volberg, Alexander},
   title={Rectifiability of harmonic measure},
   journal={Geom. Funct. Anal.},
   volume={26},
   date={2016},
   number={3},
   pages={703--728},
   issn={1016-443X},
   review={\MR{3540451}},
   doi={10.1007/s00039-016-0371-x},
}

\bib{BFP2016}{article}{
   author={Bernicot, Fr\'{e}d\'{e}ric},
   author={Frey, Dorothee},
   author={Petermichl, Stefanie},
   title={Sharp weighted norm estimates beyond Calder\'{o}n-Zygmund theory},
   journal={Anal. PDE},
   volume={9},
   date={2016},
   number={5},
   pages={1079--1113},
   issn={2157-5045},
   review={\MR{3531367}},
   doi={10.2140/apde.2016.9.1079},
}

\bib{BG1970}{article}{
   author={Burkholder, D. L.},
   author={Gundy, R. F.},
   title={Extrapolation and interpolation of quasi-linear operators on
   martingales},
   journal={Acta Math.},
   volume={124},
   date={1970},
   pages={249--304},
   issn={0001-5962},
   review={\MR{440695}},
   doi={10.1007/BF02394573},
}

\bib{CCP2022}{article}{
   author={Cadilhac, L\'{e}onard},
   author={Conde-Alonso, Jos\'{e} M.},
   author={Parcet, Javier},
   title={Spectral multipliers in group algebras and noncommutative
   Calder\'{o}n-Zygmund theory},
   language={English, with English and French summaries},
   journal={J. Math. Pures Appl. (9)},
   volume={163},
   date={2022},
   pages={450--472},
   issn={0021-7824},
   review={\MR{4438906}},
   doi={10.1016/j.matpur.2022.05.011},
}

\bib{C2020}{article}{
    author={Conde-Alonso, Jos\'{e} M.},
    title={BMO from dyadic BMO for nonhomogeneous measures},
   journal={Pub. Mat},
   date={2020},
}

\bib{CCDO2017}{article}{
   author={Conde-Alonso, Jos\'{e} M.},
   author={Culiuc, Amalia},
   author={Di Plinio, Francesco},
   author={Ou, Yumeng},
   title={A sparse domination principle for rough singular integrals},
   journal={Anal. PDE},
   volume={10},
   date={2017},
   number={5},
   pages={1255--1284},
   issn={2157-5045},
   review={\MR{3668591}},
   doi={10.2140/apde.2017.10.1255},
}

\bib{CP2019}{article}{
   author={Conde-Alonso, Jos\'{e} M.},
   author={Parcet, Javier},
   title={Nondoubling Calder\'{o}n-Zygmund theory: a dyadic approach},
   journal={J. Fourier Anal. Appl.},
   volume={25},
   date={2019},
   number={4},
   pages={1267--1292},
   issn={1069-5869},
   review={\MR{3977117}},
   doi={10.1007/s00041-018-9624-4},
}

\bib{CR2016}{article}{
    author={Conde-Alonso, Jos\'{e} M.},
    author={Rey, Guillermo},
    title={A pointwise estimate for positive dyadic shifts and some applications},
   journal={Math. Ann.},
   date={2016},
}

\bib{CDO2018}{article}{
   author={Culiuc, Amalia},
   author={Di Plinio, Francesco},
   author={Ou, Yumeng},
   title={Uniform sparse domination of singular integrals via dyadic shifts},
   journal={Math. Res. Lett.},
   volume={25},
   date={2018},
   number={1},
   pages={21--42},
}

\bib{Dav1970}{article}{
   author={Davis, Burgess},
   title={On the integrability of the martingale square function},
   journal={Israel J. Math.},
   volume={8},
   date={1970},
   pages={187--190},
   issn={0021-2172},
   review={\MR{268966}},
   doi={10.1007/BF02771313},
}

\bib{DOR2016}{article}{
   author={Duoandikoetxea, Javier},
   author={Mart\'{\i}n-Reyes, Francisco J.},
   author={Ombrosi, Sheldy},
   title={On the $A_\infty$ conditions for general bases},
   journal={Math. Z.},
   volume={282},
   date={2016},
   number={3-4},
   pages={955--972}
   
}

\bib{GJ1982}{article}{
   author={Garnett, John B.},
   author={Jones, Peter W.},
   title={BMO from dyadic BMO},
   journal={Pacific J. Math.},
   volume={99},
   date={1982},
   number={2},
   pages={351--371},
   issn={0030-8730},
   review={\MR{658065}},
}

\bib{Han2018}{article}{
   author={H\"{a}nninen, Timo S.},
   title={Equivalence of sparse and Carleson coefficients for general sets},
   journal={Ark. Mat.},
   volume={56},
   date={2018},
   number={2},
   pages={333--339},
   issn={0004-2080},
   review={\MR{3893778}},
   doi={10.4310/ARKIV.2018.v56.n2.a8},
}

\bib{Hyt2012}{article}{
   author={Hyt\"{o}nen, Tuomas P.},
   title={The sharp weighted bound for general Calder\'{o}n-Zygmund operators},
   journal={Ann. of Math. (2)},
   volume={175},
   date={2012},
   number={3},
   pages={1473--1506},
   issn={0003-486X},
   review={\MR{2912709}},
   doi={10.4007/annals.2012.175.3.9},
}

\bib{Lacey2017}{article}{
   author={Lacey, Michael T.},
   title={An elementary proof of the $A_2$ bound},
   journal={Israel J. Math.},
   volume={217},
   date={2017},
   number={1},
   pages={181--195},
   issn={0021-2172},
   review={\MR{3625108}},
   doi={10.1007/s11856-017-1442-x},
}

\bib{Ler2013}{article}{
   author={Lerner, Andrei K.},
   title={On an estimate of Calder\'{o}n-Zygmund operators by dyadic positive
   operators},
   journal={J. Anal. Math.},
   volume={121},
   date={2013},
   pages={141--161},
   issn={0021-7670},
   review={\MR{3127380}},
   doi={10.1007/s11854-013-0030-1},
}

\bib{LN2019}{article}{
   author={Lerner, Andrei K.},
   author={Nazarov, Fedor},
   title={Intuitive dyadic calculus: the basics},
   journal={Expo. Math.},
   volume={37},
   date={2019},
   number={3},
   pages={225--265},
   issn={0723-0869},
   review={\MR{4007575}},
   doi={10.1016/j.exmath.2018.01.001},
}

\bib{LMP2014}{article}{
   author={L\'{o}pez-S\'{a}nchez, Luis Daniel},
   author={Martell, Jos\'{e} Mar\'{\i}a},
   author={Parcet, Javier},
   title={Dyadic harmonic analysis beyond doubling measures},
   journal={Adv. Math.},
   volume={267},
   date={2014},
   pages={44--93}
}

\bib{Mei2003}{article}{
   author={Mei, Tao},
   title={BMO is the intersection of two translates of dyadic BMO},
   language={English, with English and French summaries},
   journal={C. R. Math. Acad. Sci. Paris},
   volume={336},
   date={2003},
   number={12},
   pages={1003--1006},
   issn={1631-073X},
   review={\MR{1993970}},
   doi={10.1016/S1631-073X(03)00234-6},
}

\bib{M1972}{article}{
   author={Muckenhoupt, Benjamin},
   title={Weighted norm inequalities for the Hardy maximal function},
   journal={Trans. Amer. Math. Soc.},
   volume={165},
   date={1972},
   pages={207--226},
}

\bib{NTV2014}{article}{
   author={Nazarov, Fedor},
   author={Tolsa, Xavier},
   author={Volberg, Alexander},
   title={On the uniform rectifiability of AD-regular measures with bounded
   Riesz transform operator: the case of codimension 1},
   journal={Acta Math.},
   volume={213},
   date={2014},
   number={2},
   pages={237--321},
   issn={0001-5962},
   review={\MR{3286036}},
   doi={10.1007/s11511-014-0120-7},
}

\bib{NTV1998}{article}{
   author={Nazarov, F.},
   author={Treil, S.},
   author={Volberg, A.},
   title={Weak type estimates and Cotlar inequalities for Calder\'{o}n-Zygmund
   operators on nonhomogeneous spaces},
   journal={Internat. Math. Res. Notices},
   date={1998},
   number={9},
   pages={463--487},
   issn={1073-7928},
   review={\MR{1626935}},
   doi={10.1155/S1073792898000312},
}

\bib{NTV2003}{article}{
   author={Nazarov, F.},
   author={Treil, S.},
   author={Volberg, A.},
   title={The $Tb$-theorem on non-homogeneous spaces},
   journal={Acta Math.},
   volume={190},
   date={2003},
   number={2},
   pages={151--239},
   issn={0001-5962},
   review={\MR{1998349}},
   doi={10.1007/BF02392690},
}

\bib{P2000}{article}{,
    AUTHOR = {Petermichl, Stefanie},
     TITLE = {Dyadic shifts and a logarithmic estimate for {H}ankel
              operators with matrix symbol},
   JOURNAL = {C. R. Acad. Sci. Paris S\'{e}r. I Math.},
    VOLUME = {330},
      YEAR = {2000},
    NUMBER = {6},
     PAGES = {455--460},
      ISSN = {0764-4442},
       DOI = {10.1016/S0764-4442(00)00162-2},
       URL = {https://doi.org/10.1016/S0764-4442(00)00162-2},
}

\bib{TTV}{article}{
   author={Thiele, Christoph},
   author={Treil, Sergei},
   author={Volberg, Alexander},
   title={Weighted martingale multipliers in the non-homogeneous setting and
   outer measure spaces},
   journal={Adv. Math.},
   volume={285},
   date={2015},
   pages={1155--1188},
}

\bib{To2001}{article}{
   author={Tolsa, Xavier},
   title={BMO, $H^1$, and Calder\'{o}n-Zygmund operators for non doubling
   measures},
   journal={Math. Ann.},
   volume={319},
   date={2001},
   number={1},
   pages={89--149},
   issn={0025-5831},
   review={\MR{1812821}},
   doi={10.1007/PL00004432},
}

\bib{To2001b}{article}{
   author={Tolsa, Xavier},
   title={A proof of the weak $(1,1)$ inequality for singular integrals with
   non doubling measures based on a Calder\'{o}n-Zygmund decomposition},
   journal={Publ. Mat.},
   volume={45},
   date={2001},
   number={1},
   pages={163--174},
   issn={0214-1493},
   review={\MR{1829582}},
}

\bib{To2003}{article}{
   author={Tolsa, Xavier},
   title={Painlev\'{e}'s problem and the semiadditivity of analytic capacity},
   journal={Acta Math.},
   volume={190},
   date={2003},
   number={1},
   pages={105--149},
   issn={0001-5962},
   review={\MR{1982794}},
   doi={10.1007/BF02393237},
}

\bib{To2007}{article}{
   author={Tolsa, Xavier},
   title={Weighted norm inequalities for Calder\'{o}n-Zygmund operators without
   doubling conditions},
   journal={Publ. Mat.},
   volume={51},
   date={2007},
   number={2},
   pages={397--456},
   issn={0214-1493},
   review={\MR{2334796}},
}

\bib{VZ2018}{article}{
   author={Volberg, Alexander},
   author={Zorin-Kranich, Pavel},
   title={Sparse domination on non-homogeneous spaces with an application to
   $A_p$ weights},
   journal={Rev. Mat. Iberoam.},
   volume={34},
   date={2018},
   number={3},
   pages={1401--1414},
   issn={0213-2230},
   review={\MR{3850292}},
   doi={10.4171/RMI/1029},
}

\end{biblist}    
\end{bibdiv}

\end{document}